\documentclass[11pt,a4paper]{article}

\usepackage[utf8]{inputenc}
\usepackage[T1]{fontenc}
\usepackage{amsmath,amssymb,amsthm}
\usepackage{mathtools}
\usepackage{hyperref}
\usepackage[margin=1in]{geometry}
\usepackage{enumitem}
\usepackage{xcolor}

\hypersetup{colorlinks=true,linkcolor=blue!60!black,urlcolor=blue!60!black,citecolor=blue!60!black}
\theoremstyle{plain}
\newtheorem{theorem}{Theorem}[section]
\newtheorem{lemma}[theorem]{Lemma}
\newtheorem{proposition}[theorem]{Proposition}
\newtheorem{corollary}[theorem]{Corollary}

\theoremstyle{definition}
\newtheorem{definition}[theorem]{Definition}
\newtheorem{remark}[theorem]{Remark}
\newtheorem{postulate}[theorem]{Postulate}

\newcommand{\R}{\mathbb{R}}
\newcommand{\Rplus}{\mathbb{R}_{>0}}
\newcommand{\Jcost}{J}
\newcommand{\Jlog}{\widetilde{J}}
\newcommand{\Kkin}{K}
\newcommand{\actionJ}{\mathcal{J}}
\newcommand{\actionA}{\mathcal{A}}
\newcommand{\actionL}{\mathcal{L}}
\newcommand{\Ehess}{\mathcal{E}_{\mathrm{Hess}}}
\newcommand{\vsr}{v_{\mathrm{SR}}}
\newcommand{\Z}{\mathbb{Z}} 

\title{\textbf{d'Alembert's Functional Equation and a Globally Convex Free-Action Principle on Positive Paths}}

\author{Sebastian Pardo-Guerra and Jonathan Washburn\\
  Recognition Physics Institute\\
  Austin, Texas, USA\\
  \texttt{sebas@recognitionphysics.org}}

\date{April 2026}

\begin{document}
\maketitle

\begin{abstract}
\noindent We study the kinetic action that d'Alembert's functional equation induces on positive paths in $\Rplus$, and prove that it is strongly convex. Calibrated d'Alembert forces the cosh cost $\Jcost(x)=\tfrac12(x+x^{-1})-1$, equivalently $\Jlog(\xi)=\cosh\xi-1$ in the log coordinate $\xi=\log x$. Evaluating this log-cost at the log-\emph{velocity} $\dot\xi$ rather than the log-position -- a single named modeling postulate (Postulate~\ref{post:step}) -- yields the kinetic action $\actionA[\gamma]=\int_a^b(\cosh\dot\xi-1)\,dt$, which is strongly convex under geometric (log-space) interpolation of paths. This convexity has three consequences, none of which requires an Euler--Lagrange equation, a Fr\'echet derivative, or a second variation. First, a one-sided chord condition characterizes global minimality. Second, the unique fixed-endpoint minimizer is the uniform-log-velocity path. Third, the action gap obeys an exact Bregman / Pythagorean identity $\actionA[\gamma]-\actionA[\gamma_*]=\int D_\Kkin(\dot\xi\,\|\,\dot\xi_*)\,dt$, sharpened by a quantitative Friedrichs--Poincar\'e bound on $\log(\gamma/\gamma_*)$. The whole package carries a dually-flat / Hessian-manifold reading in the
additive coordinate $\xi$. 
\\ This convexity theorem is purely mathematical, and we are careful to delimit it. The bridge to Newtonian and rapidity mechanics is \emph{conditional}, requiring structure beyond Postulate~\ref{post:step}: a kinematic embedding, a mass coupling, a time calibration, and a Hamiltonian-primary Legendre structure. Granted these, the cosh action recovers the Newtonian small-step limit and the relativistic rapidity profile $\Kkin_m(\phi)=m(\gamma_L-1)$; even so, the cosh-dual Hamiltonian is \emph{not} the special-relativistic free-particle Hamiltonian (Proposition~\ref{prop:not-SR}), the agreement being one of profile in rapidity rather than an identity of Hamiltonians. Global minimality, finally, is a free-sector phenomenon: once a non-affine strictly convex potential is added, joint convexity is lost and the classical local-minimum / stationary-action picture returns.

\vspace{0.5cm} 
\noindent \textbf{Keywords:} d'Alembert functional equation; strong convexity; least action principle; Bregman divergence; dually flat geometry. \\
\noindent \textbf{MSC:} 49J05; 26A51; 39B22; 53B12; 49S05
\end{abstract}

\newpage

\section{Introduction}

The principle of least action is normally introduced as a variational
postulate~\cite{arnold1989}. In this work, we isolate a narrower mathematical
question: can a functional equation, once paired with a single
explicit step-evaluation postulate, select a kinetic action whose
fixed-endpoint minimizer is global rather than merely stationary?
We answer this question for positive paths. Throughout, we take the
choice manifold to be the positive half-line
$\Rplus$, interpreted as the space of comparison ratios in the
cost-first ledger framework of
\cite{pardoguerra_ledger2026,washburn_zlatanovic2026}, and
written in logarithmic coordinate $\xi=\log x$. We do not
re-derive this choice here. The geometric and
information-theoretic content of $\Rplus$ as a Hessian manifold is
developed in the multidimensional cost-geometry paper
\cite{washburn_zlatanovic_beltracchi2026}; the cost-first ledger
interpretation of $x \in \Rplus$ as a
ratio of ledger entries (with $x = 1$ the equilibrium ``no
comparison required'' state) is the subject of
\cite{pardoguerra_ledger2026}. We take both upstream choices as
inputs to our free-sector analysis, not as consequences of it.

Our input is d'Alembert's functional equation
\begin{equation}\label{eq:dAlembert}
  H(t+u) + H(t-u) = 2\,H(t)\,H(u), \qquad t, u \in \R.
\end{equation}
With the calibration $H(0)=1$ and $H''(0)=1$, the continuous
classification gives $H(t)=\cosh t$~\cite{aczel1966,stetkaer2013}. Thus the induced cost on
$\Rplus$ is
\[
  \Jcost(x)=\cosh(\log x)-1
    =\tfrac12(x+x^{-1})-1.
\]
The dynamical content of the work then rests on a single modeling
choice, formalized below as Postulate~\ref{post:step}: the
d'Alembert log-cost $\Jlog$ is evaluated at the log-\emph{velocity}
$\dot\xi$, not at the log-position $\xi$. This is a modeling
postulate, not a theorem of d'Alembert; \eqref{eq:dAlembert} fixes
the \emph{function} $\Jlog$ but says nothing about its argument. The
motivation for the postulate is structural: d'Alembert's equation
is a composition law for additive steps (rapidity addition,
Remark~\ref{rem:two-velocities}), so the natural object the cost
applies to is the infinitesimal log-step
$\dot\xi = d(\log\gamma)/dt$. Applying $\Jlog$ to that step gives
the kinetic integrand
\[
  \Kkin(\dot\xi)=\Jlog(\dot\xi)=\cosh(\dot\xi)-1.
\]

Our main result is then a convexity theorem. On the class of
kinetically admissible positive paths, the kinetic action
\[
  \actionA[\gamma]=\int_a^b \Kkin(\dot\xi(t))\,dt,
  \qquad \xi=\log\gamma,
\]
is \emph{strongly} convex under geometric, equivalently log-space,
interpolation of paths, with explicit $L^2$ slack
$\tfrac{s(1-s)}{2}\|\dot\xi_1-\dot\xi_2\|_{L^2}^2$. Consequently, a
one-sided chord condition implies global minimality without invoking
Euler--Lagrange equations, Fr\'echet derivatives, or second
variations. More concretely, for fixed endpoints $x_a,x_b>0$ the
unique global minimizer is
\[
  \gamma_*(t)
    =\exp\!\left(\log x_a
      +\frac{t-a}{b-a}(\log x_b-\log x_a)\right),
\]
the uniform-log-velocity path. The action gap admits an exact
Bregman / Pythagorean decomposition~\cite{bregman1967}
\[
  \actionA[\gamma] - \actionA[\gamma_*]
    \;=\; \int_a^b D_\Kkin\!\bigl(\dot\xi(t)\,\|\,\dot\xi_*\bigr)\,dt
    \;\ge\; \tfrac{1}{2}\|\dot\xi - \dot\xi_*\|_{L^2}^2
    \;\ge\; \frac{\pi^2}{2(b-a)^2}\,\bigl\|\log(\gamma/\gamma_*)\bigr\|_{L^2}^2,
\]
where $D_\Kkin(v\|w)=\cosh v - \cosh w - \sinh w\,(v - w)$ is the
cosh Bregman divergence. The minimum-action profile
$\actionA_*(T,\Delta)=T(\cosh(\Delta/T)-1)$ is jointly convex and
positively $1$-homogeneous, hence subadditive in time. These
quantitative refinements admit a clean dually-flat / Hessian-manifold
reading~\cite{amari_nagaoka2000,shima2007} in the additive coordinate $\xi=\log x$: the log-cost
$\Jlog(\xi)=\cosh\xi-1$ is the Hessian potential of a 1D dually-flat
structure with metric $\cosh\xi\,d\xi^2$, whose $e$-affine
(log-affine) geodesics are exactly the geometric interpolations, and
$\actionA$ is the corresponding Bregman (cosh-)energy. This
log-coordinate structure is distinct from the Hessian metric
$g_J=x^{-3}\,dx^2$ of $\Jcost$ in the coordinate $x$
(Appendix~\ref{app:hessian}).

We confine this global-minimizer statement to the free sector. Once
a non-affine strictly convex potential is added, joint convexity of
the Lagrangian is lost and the classical local-minimum /
stationary-action picture reappears. We therefore make the later
bridge to Newtonian mechanics explicitly conditional: it requires a
kinematic embedding, a mass coupling, a time calibration, and a
Hamiltonian-primary Legendre structure. These choices explain how the cosh kinetic cost
has the Newtonian small-step limit and the relativistic rapidity
profile, but they are not part of the free convexity theorem itself.

We retain the velocity-free integral
$\actionJ[\gamma]=\int\Jcost(\gamma(t))\,dt$ only as a static cost
(Section~\ref{sec:pathspace}); its integrand has no velocity
dependence and its Euler--Lagrange equation is algebraic, selecting
the ground state $\gamma\equiv 1$ (Section~\ref{sec:el}). We defer
the comparison of $\actionA$ with the Hessian Riemannian
path-energy on $(\Rplus,g_J)$
to Appendix~\ref{app:hessian}.

\medskip
\noindent\emph{Nature of the contribution.} The free-sector
arguments are deliberately elementary: once d'Alembert fixes the
cosh cost and Postulate~\ref{post:step} places it at the
log-velocity, the convexity results follow from pointwise convexity
of $\cosh$ after the log change of coordinates, Jensen's
inequality, a perspective construction~\cite{rockafellar1970,boyd_vandenberghe2004},
the one-dimensional Friedrichs inequality~\cite{hardy_littlewood_polya1952}, and
textbook Bregman / dually-flat geometry~\cite{bregman1967,amari_nagaoka2000,shima2007}.
The novelty we claim is therefore one of \emph{provenance and
organization}, not of analytic depth: a functional equation together
with a single named postulate forces a globally (not merely locally)
minimizing free action with an exact Bregman gap and a closed-form
geodesic minimizer. The fuller scope statement, including the
conditional status of the mechanics bridge, is given in
Section~\ref{sec:discussion}.

\subsection{Structure of the paper}

The paper has two parts. The free-sector mathematical core occupies
Sections~\ref{sec:cost}--\ref{sec:el}, and the conditional physical
bridge occupies Sections~\ref{sec:newton}--\ref{sec:scope}.

\begin{itemize}[noitemsep,topsep=2pt]
  \item \S\ref{sec:cost} recalls the calibrated d'Alembert
    classification and introduces the costs $\Jcost,\Jlog,\Kkin$.
  \item \S\ref{sec:pathspace} sets up positive path spaces,
    $\actionA$, and the geometric and arithmetic interpolations.
  \item \S\ref{sec:convexity} proves the strong convexity theorem,
    the fixed-endpoint minimizer, the Pythagorean / Bregman identity
    with its Friedrichs--Poincar\'e bound, the perspective convexity
    of the minimum-action profile, and the dually flat reformulation.
  \item \S\ref{sec:el} compares the static, kinetic, and Hessian
    Euler--Lagrange pictures.
  \item \S\ref{sec:newton} derives Newton's law from the cosh
    Lagrangian.
  \item \S\ref{sec:hamilton} gives the Hamiltonian formulation and
    the Noether conservation laws.
  \item \S\ref{sec:scope} records the classical local-min /
    stationary-action picture for general potentials.
\end{itemize}

\noindent Section~\ref{sec:discussion} summarizes the results and
lists open directions, and Appendix~\ref{app:hessian} collects the
comparison material on the Hessian Riemannian path-energy $\Ehess$.

\begin{remark}[Scope of the framework: what is and is not claimed]\label{rem:scope}
Because the framework draws on several traditions
(d'Alembert's functional equation, Bregman / dually flat geometry,
Newtonian mechanics, special relativity), it is worth stating up
front what each piece does and does not assert.

\begin{itemize}
  \item \emph{Two velocity notions.} Under the kinematic embedding
  of Definition~\ref{def:embedding}, the coordinate velocity
  $\dot q$ equals the boost rapidity $\phi$ (in natural units),
  \emph{not} the SR $3$-velocity $\vsr = c\tanh\phi$. The two
  agree only at leading order, $\phi = \vsr/c + O((\vsr/c)^3)$;
  for large rapidity $\vsr$ saturates at $\pm c$ while
  $\dot q = \phi$ remains unbounded.

  \item \emph{Cosh-dual Hamiltonian is not the SR Hamiltonian.}
  The Hamiltonian $T_H(p) =
  p\operatorname{arsinh}(p/m)-\sqrt{m^2+p^2}+m$ of
  \S\ref{sec:hamilton} is the Legendre dual of the cosh kinetic
  cost. It is \emph{not} the SR free-particle Hamiltonian
  $\sqrt{m^2+p^2}-m$; the two agree only at $O(p^2)$ and differ
  strictly at $O(p^4)$, with $T_H(p) > \sqrt{m^2+p^2}-m$ for all
  $p\neq 0$ (Proposition~\ref{prop:not-SR}). The
  Proposition~\ref{prop:rapidity} match is one of \emph{profile}
  in rapidity, not Hamiltonian identity.

  \item \emph{General potentials are external.} The native
  cosh-sinh potential $V_{\mathrm{nat}} = k\Jlog$ is the only
  potential forced by the same d'Alembert uniqueness that forces
  the kinetic term. All other potentials (harmonic, Coulomb,
  gravitational, polynomial, lattice, periodic, etc.) are external
  inputs from the cost-field environment; we do not derive them
  from \eqref{eq:dAlembert}.

  \item \emph{Global minimality is a free-sector claim.} The
  chord-form free-action principle (Theorem~\ref{thm:pla}) gives
  global minimality of $\actionA$ on kinetically admissible
  positive paths sharing endpoints. Once a non-affine strictly
  convex potential is added, the Lagrangian becomes indefinite in
  $(\xi,\dot\xi)$ and the classical short-time
  local-minimum~/ long-time stationary-action~/ conjugate-time
  obstruction picture replaces global minimality
  (\S\ref{sec:scope}).
\end{itemize}
\end{remark}

\section{The Cost Functional, Log Coordinates, and the Kinetic Cost}\label{sec:cost}

This section assembles the cost functions on which the rest of the
paper is built. We first recall the calibrated d'Alembert
classification, then introduce the static cost $\Jcost$ on $\Rplus$,
its logarithmic form $\Jlog$, and the kinetic cost $\Kkin$ obtained
by evaluating $\Jlog$ at the log-velocity (Postulate~\ref{post:step}).
We begin by fixing notation.

\begin{remark}[Notation glossary]\label{rem:notation}
For convenience we list here the principal symbols used in the cost,
action, Lagrangian, and Hamiltonian layers of the paper, with a
pointer to where each is first introduced. The free-sector symbols
($\Jcost$ through $\actionA_*$) are dimensionless throughout; the
mechanics-layer symbols ($\Kkin_m$ onward) carry the dimensions
recorded in the dimensional dictionary
(Remark~\ref{rem:dim-dict}).

\smallskip
\noindent\begin{tabular}{@{}lll@{}}
\hline
Symbol & Role & First introduced \\ \hline
$\Jcost(x)$ & static cost on $\Rplus$, $\tfrac12(x+x^{-1})-1$
            & \S\ref{sec:cost} \\
$\Jlog(\xi)$ & log-cost, $\cosh\xi - 1$
             & \S\ref{sec:cost} \\
$\Kkin(v)$ & kinetic cost, $\Jlog(v) = \cosh v - 1$
            & \S\ref{sec:cost}, Postulate~\ref{post:step} \\
$\actionJ[\gamma]$ & static integral $\int_a^b \Jcost(\gamma)\,dt$
                   & \S\ref{sec:pathspace} \\
$\actionA[\gamma]$ & kinetic action
                     $\int_a^b \Kkin(\dot\xi)\,dt$
                   & \S\ref{sec:pathspace} \\
$\actionA_*(T,\Delta)$ & minimum-action profile,
                         $T(\cosh(\Delta/T) - 1)$
                       & \S\ref{subsec:perspective} \\
$\Ehess[\gamma]$ & Hessian Riemannian path-energy
                   (distinct from $\actionA$)
                 & \S\ref{subsec:ground-state},
                   App.~\ref{app:hessian} \\
$\Kkin_m(\phi)$ & physical kinetic cost,
                  $m(\cosh\phi - 1)$
                & \S\ref{sec:newton}, Def.~\ref{def:mass} \\
$\actionL(\xi,\dot\xi)$ & cosh Lagrangian,
                          $\Kkin_m(\dot\xi) - V(\xi)$
                        & \S\ref{subsec:general} \\
$L_{\mathrm{nat}}$ & native d'Alembert Lagrangian
                   & Def.~\ref{def:native} \\
$T_H(p)$ & cosh-dual Hamiltonian
           ($\neq \sqrt{m^2+p^2}-m$,
           Prop.~\ref{prop:not-SR})
         & Prop.~\ref{prop:TH} \\
$H(\xi,p)$ & Hamiltonian, $T_H(p) + V(\xi)$
           & Def.~\ref{def:additive} \\
\hline
\end{tabular}

\smallskip
\noindent The kinematic symbols ($\xi$, $\phi$, $\vsr$, $q$, $\tau$,
$t_0$) used in the mechanics layer are catalogued separately in
Remark~\ref{rem:dim-dict}. Two symbols carry conventional double
duty within standard physics usage but are kept distinct here by
notation: the Hessian path-energy is written $\Ehess$, never $E$
(reserved for the conserved energy along a trajectory in
\S\ref{sec:hamilton}); and the d'Alembert solution function $H$ of
\eqref{eq:dAlembert} (Theorem~\ref{thm:dAlembert-classification})
appears only inside \S\ref{sec:cost}, never in the same equation as
the Hamiltonian $H(\xi,p)$ of \S\ref{sec:hamilton}.
\end{remark}

\subsection{Calibrated d'Alembert classification}

\begin{theorem}[Calibrated d'Alembert classification, recalled]\label{thm:dAlembert-classification}
Let $H : \R \to \R$ be a continuous solution of the d'Alembert
functional equation \eqref{eq:dAlembert}.
\begin{enumerate}[label=(\roman*),noitemsep,topsep=2pt]
  \item \textbf{Smoothness (Acz\'el).} By Acz\'el's smoothness theorem
    for d'Alembert's equation \cite[Ch.~3, \S 3.1.3]{aczel1966},
    continuity of $H$ on $\R$ implies $H \in C^\infty(\R)$. This is
    the only external bridge result used in the classification.
  \item \textbf{Classification.} Every continuous solution of
    \eqref{eq:dAlembert} is exactly one of the following disjoint
    alternatives:
    \[
      H(t) \equiv 0, \qquad
      H(t) = \cos(\alpha t)\;(\alpha>0), \qquad
      H(t) = \cosh(\alpha t)\;(\alpha>0), \qquad
      H(t) \equiv 1.
    \]
    The zero branch occurs when $H(0)=0$. If $H(0)=1$, then
    $H$ is even, $H'(0)=0$, and differentiating
    \eqref{eq:dAlembert} twice in the second variable at $0$ yields
    $H''(t)=H''(0)H(t)$. The sign of $H''(0)$ gives the cosine,
    cosh, or constant branch.
  \item \textbf{Calibration to $\cosh$.} The condition $H(0)=1$
    excludes the zero branch, and the condition $H''(0)>0$ excludes
    the cosine and constant branches, forcing $H(t)=\cosh(\alpha t)$
    for some $\alpha>0$. Fixing the unit of cost by the further
    normalization $H''(0)=1$ then selects $\alpha=1$, hence
    $H(t)=\cosh t$.
\end{enumerate}
\end{theorem}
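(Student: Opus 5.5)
The plan takes Aczél's smoothness result in (i) as given, as the statement itself does, and so works throughout with $H\in C^\infty(\R)$. The first step is to set $t=u=0$ in \eqref{eq:dAlembert}, which gives $2H(0)=2H(0)^2$, hence $H(0)\in\{0,1\}$. If $H(0)=0$, setting $u=t$ and then $t=0$ is not quite enough, so instead I set $t=0$ and obtain $H(u)+H(-u)=0$. The simplest route to $H\equiv 0$ in this branch is to put $u=0$ with $t$ arbitrary, which gives $2H(t)=2H(t)H(0)=0$; so $H\equiv0$. If $H(0)=1$, then setting $t=0$ gives $H(u)+H(-u)=2H(u)$, so $H$ is even and in particular $H'(0)=0$.

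Next, in the branch $H(0)=1$, I fix $t$ and differentiate \eqref{eq:dAlembert} twice in $u$. This gives $H''(t+u)+H''(t-u)=2H(t)H''(u)$, and setting $u=0$ yields the linear ODE $H''(t)=cH(t)$ with $c:=H''(0)$ and initial data $H(0)=1$, $H'(0)=0$. By uniqueness for linear constant-coefficient ODEs, the solution is determined by the sign of $c$. For $c=\alpha^2>0$ it is $\cosh(\alpha t)$, for $c=0$ it is $1$, and for $c=-\alpha^2<0$ it is $\cos(\alpha t)$. Each of these is checked directly to satisfy \eqref{eq:dAlembert} via the addition formulas, e.g.\ $\cosh(a+b)+\cosh(a-b)=2\cosh a\cosh b$. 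The alternatives are disjoint: the zero branch is separated from the others by the value $H(0)$, and the remaining three are separated by the sign of $H''(0)$, which equals $\alpha^2$, $0$, or $-\alpha^2$ respectively. Taking $\alpha>0$ makes the parametrization unique, since $\cos$ and $\cosh$ are even.

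Part (iii) then follows immediately. The condition $H(0)=1$ rules out the zero branch, and $H''(0)>0$ forces $c=\alpha^2>0$, hence $H(t)=\cosh(\alpha t)$. The normalization $H''(0)=\alpha^2=1$ with $\alpha>0$ gives $\alpha=1$.

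The only real obstacle is regularity: the differentiation step requires $C^2$, and obtaining that from mere continuity is exactly Aczél's theorem, which I invoke rather than reprove. If a self-contained argument were wanted, I would first try the standard mollification trick. Integrate \eqref{eq:dAlembert} in $u$ against a bump $\varphi$ with $\int H\varphi\neq0$, which is possible near $0$ since $H(0)=1$. This expresses $H(t)$ as a combination of convolutions of $H$ with smooth functions, so $H$ is smooth, and bootstrapping finishes the argument.
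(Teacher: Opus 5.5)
Your proposal is correct and follows essentially the same route as the paper: $H(0)\in\{0,1\}$, evenness and $H'(0)=0$ on the branch $H(0)=1$, differentiating twice in $u$ and setting $u=0$ to get $H''=H''(0)H$, then the sign trichotomy and the calibration, with Acz\'el's theorem supplying the needed $C^2$ regularity. Your regularity sketch is the paper's averaging argument (Remark~\ref{rem:aczel-restated}) with a smooth bump in place of the indicator of $[0,h]$; with a smooth bump the right-hand side is already $C^\infty$, so no bootstrapping is needed.
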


\begin{remark}[$H''(0)=1$ is a unit choice, not forced by d'Alembert]\label{rem:calibration-unit}
d'Alembert's equation~\eqref{eq:dAlembert} together with $H(0)=1$ and
$H''(0)>0$ forces $H(t)=\cosh(\alpha t)$ only \emph{up to} the scale
parameter $\alpha=\sqrt{H''(0)}>0$. The choice $\alpha=1$ we adopt
throughout is a normalization of the cost scale, equivalent to
choosing the unit in which the kinetic coefficient $K''(0)=\cosh
0=1$ at the ground state. Any other choice $\alpha>0$ produces the
same theory under the rescaling $\xi\mapsto\alpha\xi$, $K\mapsto
\cosh(\alpha\,\cdot)-1$. No qualitative result here depends on the
value of $\alpha$; only the numerical action constants do.
\end{remark}

\begin{remark}[Aczél's smoothness theorem, restated for self-containedness]\label{rem:aczel-restated}
For ease of reference and completeness, we restate the precise
statement of the single external bridge result on which
Theorem~\ref{thm:dAlembert-classification}(i) relies.

\smallskip
\noindent\emph{(Aczél)} \cite[Ch.~3, §3.1.3]{aczel1966}.
\emph{Let $H : \R \to \R$ be a continuous, not identically zero
solution of d'Alembert's equation \eqref{eq:dAlembert}. Then $H$ is
infinitely differentiable on $\R$.}

\smallskip
\noindent The proof in Aczél proceeds by a translation-and-averaging
argument. Setting $u=t$ in \eqref{eq:dAlembert} gives the algebraic
identity
\[
  H(2t) + H(0) \;=\; 2H(t)^2,
\]
which expresses $H$ on a dilated argument as a polynomial in
$H(t)$ and so propagates information about $H$ to all dyadic
scales; this identity by itself does not raise regularity. The
regularity bootstrap itself comes from integrating
\eqref{eq:dAlembert} in $u$ over $[0,h]$. Choosing $h>0$ small
enough that $C_h := \int_0^h H(u)\,du \ne 0$ (possible on the
nontrivial branch, where $H(0)=1$ and $H$ is continuous), the
substitutions $s = t\pm u$ give
\[
  2\,C_h\,H(t)
    \;=\; \int_t^{t+h} H(s)\,ds + \int_{t-h}^t H(s)\,ds.
\]
The right-hand side is $C^1$ in $t$ by the fundamental theorem of
calculus, hence so is $H$; iterating, if $H \in C^k$ then the
right-hand side is $C^{k+1}$, hence $H \in C^{k+1}$, so
$H \in C^\infty$. The upshot is that $H'(0)$ and $H''(0)$ both
exist as ordinary derivatives, which is exactly what the
calibration in Theorem~\ref{thm:dAlembert-classification}(iii)
requires. This is the only place in our development where we
invoke a continuity-to-smoothness upgrade from outside; everything
downstream stays inside $C^\infty$.
\end{remark}

\subsection{The static cost functional}

\begin{definition}[Cost functional]
The \emph{cost functional} is
\[
  \Jcost(x) \;=\; \tfrac{1}{2}\!\left(x + x^{-1}\right) - 1,
    \qquad x > 0.
\]
\end{definition}

$\Jcost$ is non-negative, vanishes iff $x = 1$, is symmetric under
inversion ($\Jcost(x) = \Jcost(1/x)$), and diverges at $0$ and
$\infty$.

\begin{theorem}[Strict convexity of $\Jcost$]\label{thm:Jconvex}
$\Jcost$ is strictly convex on $(0,\infty)$.
\end{theorem}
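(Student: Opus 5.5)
The plan is to use the second-derivative test, since $\Jcost$ is smooth on the open interval $(0,\infty)$. First I will compute the derivatives directly from the definition:
\[
  \Jcost'(x) = \tfrac12\bigl(1 - x^{-2}\bigr), \qquad \Jcost''(x) = x^{-3}.
\]
The second derivative is strictly positive for every $x>0$. This same quantity reappears later as the Hessian metric $g_J = x^{-3}\,dx^2$ mentioned in the introduction.

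Next I will pass from $\Jcost''>0$ to strict convexity itself. The standard fact is that a $C^2$ function on an open interval with everywhere positive second derivative is strictly convex. To keep the argument self-contained, I will argue as follows. Positivity of $\Jcost''$ makes $\Jcost'$ strictly increasing. Now take $x<y$ and $s\in(0,1)$, and set $z = sx+(1-s)y$. Apply the mean value theorem on $[x,z]$ and on $[z,y]$. The slope on the left piece is attained at some point below $z$, and the slope on the right piece at some point above $z$. Since $\Jcost'$ is strictly increasing, the left chord slope is strictly smaller than the right chord slope, and rearranging this gives
\[
  \Jcost(z) < s\,\Jcost(x) + (1-s)\,\Jcost(y).
\]

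As an alternative route, and as a cross-check, I can use the observation that $\Jcost$ is a sum of the affine function $\tfrac12 x - 1$ and $\tfrac12 x^{-1}$. The function $x^{-1}$ is strictly convex on $(0,\infty)$, by its second derivative $2x^{-3}$ or by the harmonic--arithmetic mean inequality. Adding an affine function does not change strict convexity.

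There is no real obstacle here. The only point needing care is the strictness in the passage from $\Jcost''>0$ to the strict chord inequality, and the mean-value argument above handles it. I will also note that the result concerns convexity in the coordinate $x$, not in $\xi = \log x$. Convexity of $\Jlog(\xi) = \cosh\xi - 1$ is a separate statement, and that is the one the later path-space arguments actually use.
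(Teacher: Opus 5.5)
Your proposal is correct and takes the same approach as the paper, whose entire proof is the one-line observation $\Jcost''(x)=x^{-3}>0$ on $(0,\infty)$; your mean-value argument only spells out the standard step from a positive second derivative to the strict chord inequality. One small correction to your closing aside: the $x$-coordinate convexity proved here is used later, in the static convexity of $\actionJ$ under arithmetic interpolation (Theorem~\ref{thm:actionJ-convex}); only the kinetic results rely on convexity of $\Jlog$.
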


\begin{proof}
$\Jcost''(x) = x^{-3} > 0$ for $x > 0$.
\end{proof}

\subsection{Logarithmic coordinates}

Set $\xi := \log x$, so that $x = e^\xi$ and $\xi$ ranges over $\R$.

\begin{definition}[Log-cost functional]
The \emph{log-cost functional} is
\[
  \Jlog(\xi) \;:=\; \Jcost(e^\xi)
    \;=\; \tfrac12(e^\xi + e^{-\xi}) - 1
    \;=\; \cosh(\xi) - 1.
\]
\end{definition}

\begin{proposition}[Properties of $\Jlog$]\label{prop:Jlog}
\begin{enumerate}[label=(\roman*),noitemsep]
  \item $\Jlog(\xi) \geq 0$ with equality iff $\xi = 0$.
  \item $\Jlog$ is even: $\Jlog(-\xi) = \Jlog(\xi)$, which is the
    log-form of $\Jcost(x) = \Jcost(1/x)$.
  \item $\Jlog$ is strictly convex on $\R$, with
    $\Jlog''(\xi) = \cosh(\xi) \geq 1 > 0$.
  \item Small-$\xi$ Taylor:
    $\Jlog(\xi) = \tfrac12\xi^2 + \tfrac{1}{24}\xi^4 + O(\xi^6)$.
\end{enumerate}
\end{proposition}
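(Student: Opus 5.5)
The proof is a direct computation from the closed form $\Jlog(\xi)=\cosh\xi-1=\tfrac12(e^\xi+e^{-\xi})-1$; each of the four items follows from an elementary property of $\cosh$, so we take them in order.

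For (i), we use the identity $\cosh\xi-1=2\sinh^2(\xi/2)$, which follows from the double-angle formula $\cosh(2u)=1+2\sinh^2 u$ with $u=\xi/2$. This gives $\Jlog(\xi)\ge 0$ immediately. Equality holds iff $\sinh(\xi/2)=0$, and since $\sinh$ is strictly increasing with unique zero at $0$, this happens iff $\xi=0$. Alternatively, AM--GM applied to $e^{\xi}$ and $e^{-\xi}$ gives $\tfrac12(e^\xi+e^{-\xi})\ge 1$, with equality iff $e^\xi=e^{-\xi}$, i.e.\ $\xi=0$. This matches the statement that $\Jcost$ vanishes only at $x=1$.

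For (ii), evenness is visible from the formula, since replacing $\xi$ by $-\xi$ swaps the two exponentials. We also record the translation: $\Jlog(-\xi)=\Jcost(e^{-\xi})=\Jcost(1/e^{\xi})$, which is exactly the inversion symmetry $\Jcost(x)=\Jcost(1/x)$ with $x=e^\xi$.

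For (iii), we differentiate twice to get $\Jlog''(\xi)=\cosh\xi$. Then $\cosh\xi=1+2\sinh^2(\xi/2)\ge 1$ by the computation in (i). A $C^2$ function with strictly positive second derivative on an interval is strictly convex, by the standard mean-value argument. Indeed, the bound $\Jlog''\ge 1$ gives the stronger statement that $\Jlog-\tfrac12\xi^2$ is convex, i.e.\ $\Jlog$ is $1$-strongly convex; this is the form used later for the $L^2$ slack.

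For (iv), we expand $\cosh\xi=\sum_{k\ge0}\xi^{2k}/(2k)!$, which is an entire power series. Subtracting $1$ leaves $\tfrac12\xi^2+\tfrac1{24}\xi^4+R(\xi)$, where the remainder $R(\xi)=\sum_{k\ge3}\xi^{2k}/(2k)!$ satisfies $|R(\xi)|\le |\xi|^6\cosh(1)$ for $|\xi|\le1$. Hence $R(\xi)=O(\xi^6)$.

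No step is a genuine obstacle. The only point needing care is deducing strict convexity in (iii) from the pointwise bound $\Jlog''>0$. We will handle it by applying the mean value theorem to $\Jlog'$: since $\Jlog''>0$, the derivative $\Jlog'=\sinh$ is strictly increasing, and a differentiable function with strictly increasing derivative is strictly convex.
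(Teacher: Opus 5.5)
Your proposal is correct and takes the same route as the paper, whose proof is a one-line appeal to the closed form $\Jlog(\xi)=\cosh\xi-1$ and the Taylor series of $\cosh$. Your additional details are valid elaborations of that argument: the identity $\cosh\xi-1=2\sinh^2(\xi/2)$, the strict-convexity step via the strict monotonicity of $\Jlog'=\sinh$, and the explicit $O(\xi^6)$ remainder bound.
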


\begin{proof}
All from $\Jlog(\xi) = \cosh(\xi) - 1$ and the standard Taylor series
of $\cosh$.
\end{proof}

\begin{remark}
The passage $\Jcost \leftrightarrow \Jlog$ via $\xi = \log x$ is the
natural one for d'Alembert. In the additive variable $\xi$,
\eqref{eq:dAlembert} is precisely the cosh-composition identity
$\cosh(\xi+\eta) + \cosh(\xi-\eta) = 2\cosh\xi\cosh\eta$, so $\Jlog$
is the d'Alembert-calibrated cost \emph{evaluated at an additive
argument}. The remainder of our analysis lives naturally in these
coordinates.
\end{remark}

\subsection{The kinetic cost}

The classification of d'Alembert solutions
(Theorem~\ref{thm:dAlembert-classification}) fixes the
\emph{function} $\Jlog(\cdot) = \cosh(\cdot) - 1$, but it does not
specify which argument that function takes on. Two natural choices
arise on a positive path $\gamma:[a,b]\to\Rplus$ with
$\xi = \log\gamma$: the log-\emph{position} $\xi(t)$ and the
log-\emph{velocity} $\dot\xi(t) = d(\log\gamma)/dt$. We make the
second choice explicit as a postulate, since it is what generates
the dynamics of this paper.

\begin{postulate}[Step-evaluation of the d'Alembert log-cost]\label{post:step}
Along a positive path $\gamma:[a,b]\to\Rplus$ with $\xi=\log\gamma$,
the d'Alembert log-cost $\Jlog$ is evaluated at the
log-\emph{velocity} $\dot\xi$, not at the log-position $\xi$. The
cost per unit parameter time of motion with log-velocity $\dot\xi$
is therefore
\[
  \Jlog(\dot\xi) \;=\; \cosh(\dot\xi) - 1.
\]
\end{postulate}

\begin{remark}[Status and consequences of Postulate~\ref{post:step}]\label{rem:postulate-status}
Postulate~\ref{post:step} is a \emph{modeling choice}, not a
consequence of d'Alembert's equation: \eqref{eq:dAlembert} fixes the
function $\Jlog$ but says nothing about its argument. Its motivation
is structural -- d'Alembert's equation is the composition law for
additive \emph{steps} (rapidity addition, in the kinematic reading
of \S\ref{sec:newton}, Remark~\ref{rem:two-velocities}), and the
canonical additive object on a positive path is the infinitesimal
log-step $\dot\xi$. We do not claim this motivation forces the
postulate.

Evaluating $\Jlog$ at the log-\emph{position} $\xi$ instead gives
the velocity-free static cost integrand of \S\ref{sec:pathspace},
whose Euler--Lagrange equation is algebraic
(Remark~\ref{rem:cost-rate-EL}) and produces no dynamics. The
strongly convex kinetic action of \S\ref{sec:convexity}, and with
it the entire free-sector global-minimum result of this paper, is
downstream of Postulate~\ref{post:step}. The same $\Jlog$ also
reappears at log-position in \S\ref{sec:newton} as the native
potential $V_{\mathrm{nat}}$ once a Hamiltonian-primary additive
cost postulate is added; until then, only the velocity reading is
in force.
\end{remark}

\begin{definition}[Kinetic cost]
Under Postulate~\ref{post:step}, the \emph{kinetic cost} is the
real-valued function $\Kkin : \R \to \R$,
\[
  \Kkin(v) \;:=\; \Jlog(v) \;=\; \cosh(v) - 1.
\]
The value $\Kkin(\dot\xi)$ is the d'Alembert-calibrated cost per
unit parameter time of motion with log-velocity $\dot\xi$. In the
physical bridge of Section~\ref{sec:newton}, this parameter is
calibrated to the dimensionless time $\tau=t_{\mathrm{phys}}/t_0$;
before that bridge, $t$ is only the variational parameter on the path.
\end{definition}

\begin{proposition}[Properties of $\Kkin$]\label{prop:Kkin}
\begin{enumerate}[label=(\roman*),noitemsep]
  \item $\Kkin(v) \geq 0$ with equality iff $v = 0$ (zero
    log-velocity).
  \item $\Kkin$ is even.
  \item $\Kkin$ is strictly convex on $\R$, with
    $\Kkin''(v) = \cosh(v) \geq 1$.
  \item \textbf{Sharp quartic remainder.} For all $|v| < 1$,
    \begin{equation}\label{eq:Kkin-sharp-quartic}
      \bigl|\Kkin(v) - \tfrac12 v^2\bigr|
      \;\leq\; \frac{v^4}{24\,(1 - v^2)}.
    \end{equation}
    In particular, $\Kkin(v) - \tfrac12 v^2 = \tfrac{v^4}{24} +
    O(v^6)$ as $v\to 0$, and the leading constant $1/24$ is sharp.
\end{enumerate}
\end{proposition}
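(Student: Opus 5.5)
Items (i)--(iii) follow immediately from the identity $\Kkin=\Jlog$ together with Proposition~\ref{prop:Jlog}. Since $\Kkin(v)=\cosh v-1$, we have $\Kkin(v)\ge 0$ with equality iff $v=0$, and $\Kkin$ is even. Differentiating twice gives $\Kkin''(v)=\cosh v\ge 1>0$, which yields strict convexity.

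For (iv), I will expand the Taylor series
\[
\Kkin(v)-\tfrac12 v^2=\sum_{k\ge 2}\frac{v^{2k}}{(2k)!}.
\]
Every term in this sum is nonnegative, so the absolute value is simply the sum itself. Next I use the elementary bound $(2k)!\ge 24\cdot 30^{\,k-2}\ge 24$ for $k\ge2$; the crude version $(2k)!\ge 24$ is all that is needed. This gives
\[
0\le \Kkin(v)-\tfrac12 v^2\le \frac{1}{24}\sum_{k\ge2} v^{2k}=\frac{v^4}{24}\sum_{j\ge0}v^{2j}=\frac{v^4}{24(1-v^2)}
\]
for $|v|<1$, where the geometric series converges. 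The asymptotic statement follows from the same series: the first term is $v^4/24$ and the remainder is $\sum_{k\ge3}v^{2k}/(2k)!=O(v^6)$.

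Sharpness of the constant $1/24$ means that no smaller constant $c$ can satisfy $|\Kkin(v)-\tfrac12v^2|\le c\,v^4/(1-v^2)$ near $0$. To see this, divide both sides by $v^4$ and let $v\to0$. The left side tends to $1/24$ and the right side tends to $c$, so $c\ge 1/24$.

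None of these steps is a real obstacle. The only point needing care is the factorial comparison $(2k)!\ge 24$, which holds since $(2k)!\ge 4!$ for $k\ge 2$. One should also note explicitly that the series has nonnegative terms, which is what allows the absolute value to be dropped.
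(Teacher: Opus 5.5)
Your proposal is correct and follows the paper's proof essentially step for step: the nonnegative cosh tail $\sum_{k\ge2}v^{2k}/(2k)!$, the bound $(2k)!\ge 4!$, the geometric series in $v^2$, and sharpness from the leading $k=2$ term. Your explicit divide-by-$v^4$ limit argument simply spells out what the paper records as $\Kkin(v)-\tfrac12 v^2\sim v^4/24$.
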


\begin{proof}
(i)--(iii) are immediate from $\Kkin(v) = \cosh(v) - 1$ and
$\Kkin''(v) = \cosh(v) \ge 1$.

For (iv), the cosh Taylor series gives
\[
  \Kkin(v) - \tfrac12 v^2
    \;=\; \sum_{k\ge 2} \frac{v^{2k}}{(2k)!}.
\]
All terms are non-negative, so the difference is non-negative
(the absolute value can be dropped). For each $k \ge 2$,
$(2k)! \ge 4! = 24$ and $v^{2k} = v^4 \cdot (v^2)^{k-2}$, so
$v^{2k}/(2k)! \le v^4(v^2)^{k-2}/24$. Summing the resulting
geometric series in $v^2$ (valid for $|v| < 1$),
\[
  0 \;\le\; \sum_{k\ge 2} \frac{v^{2k}}{(2k)!}
  \;\le\; \frac{v^4}{24}\sum_{j\ge 0} (v^2)^j
  \;=\; \frac{v^4}{24\,(1 - v^2)},
\]
which is \eqref{eq:Kkin-sharp-quartic}. Sharpness of the leading
$1/24$ follows from the $k=2$ term: as $v\to 0$,
$\Kkin(v) - \tfrac12 v^2 \sim v^4/24$.
\end{proof}

\begin{remark}[Rapidity interpretation]
If we read the dimensionless step variable $\phi := d\xi/d\tau$ as
the rapidity of a one-parameter boost, then
$\Kkin(\phi) = \cosh(\phi) - 1$ is the excess cosh of rapidity --
the relativistic kinetic-energy profile. d'Alembert's equation
\eqref{eq:dAlembert} is the rapidity-addition law, so this
profile is internal to the d'Alembert framework. The dynamical
identification of $\phi$ with physical rapidity is the kinematic
calibration we make in Definition~\ref{def:embedding}.
\end{remark}

\section{Path Space, Static and Kinetic Actions, and Interpolation}\label{sec:pathspace}

With the cost functions $\Jcost$, $\Jlog$, and $\Kkin$ now in hand,
we turn to the spaces of paths on which they act. This section fixes
the admissibility classes used throughout the paper, defines the
static cost integral $\actionJ$ and the kinetic action $\actionA$,
and introduces the two interpolations -- arithmetic and geometric --
whose contrast drives the convexity theory of
Section~\ref{sec:convexity}.

\subsection{Admissible paths}

\begin{definition}[Admissible path]\label{def:admissible}
For $a,b \in \R$ with $a < b$, an \emph{admissible path on
$[a,b]$} is a function $\gamma : [a,b] \to \Rplus$ that is absolutely
continuous on $[a,b]$, satisfies $\gamma(t) > 0$ for every
$t \in [a,b]$, and has $\dot\gamma \in L^2([a,b])$.
\end{definition}

\begin{remark}[Automatic uniform positivity]\label{rem:auto-positive}
Absolute continuity on the compact interval $[a,b]$ implies continuity,
so the strict positivity condition $\gamma(t) > 0$ on the compact
$[a,b]$ together with continuity yields a positive lower bound: there
exists $c_\gamma > 0$ such that $\gamma(t) \ge c_\gamma$ for all
$t \in [a,b]$. We therefore use ``strictly positive on $[a,b]$'' and
``bounded away from $0$'' interchangeably for admissible paths.
\end{remark}

Strict positivity is essential: $\Jcost$ is defined only on
$(0,\infty)$, and we will need $\log\gamma$ to be absolutely
continuous with $L^2$ derivative. The log-coordinate of an admissible
path, $\xi(t) := \log\gamma(t)$, is absolutely continuous on $[a,b]$
with $\dot\xi = \dot\gamma/\gamma \in L^2$ (using
Remark~\ref{rem:auto-positive} to bound $1/\gamma$ above by
$1/c_\gamma < \infty$).

\begin{definition}[Finite-action (kinetically admissible) path]\label{def:kin-adm}
An admissible path $\gamma$ on $[a,b]$ is \emph{kinetically admissible}
(or \emph{finite-action}) if, writing $\xi := \log\gamma$, one has
$\Kkin(\dot\xi) \in L^1([a,b])$, equivalently
\[
  \int_a^b \bigl[\cosh(\dot\xi(t)) - 1\bigr]\,dt < \infty.
\]
\end{definition}

\begin{remark}\label{rem:actionA-wellposed}
The condition $\dot\xi \in L^2([a,b])$ does \emph{not} by itself imply
$\Kkin(\dot\xi) \in L^1([a,b])$, since $\cosh$ has super-quadratic
growth. We therefore treat the kinetic action $\actionA$ below as a
real-valued functional on the class of kinetically admissible paths.
\end{remark}

\subsection{The static cost integral}

\begin{definition}[Static cost integral]
For an admissible path $\gamma$ on $[a,b]$, the \emph{static cost
integral} is
\[
  \actionJ[\gamma] \;:=\; \int_a^b \Jcost(\gamma(t))\,dt.
\]
\end{definition}

\begin{proposition}[Basic properties of $\actionJ$]\label{prop:S-basic}
$\actionJ[\gamma] \geq 0$, and
$\actionJ[\gamma_{\mathrm{const}=1}] = 0$.
\end{proposition}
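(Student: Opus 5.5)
The proof is a pointwise argument on the integrand, integrated over $[a,b]$. For an admissible path $\gamma$, the map $t \mapsto \Jcost(\gamma(t))$ is the composition of the continuous function $\Jcost$ on $(0,\infty)$ with the continuous path $\gamma$. By Remark~\ref{rem:auto-positive}, $\gamma$ takes values in a compact subinterval $[c_\gamma, C_\gamma]$ of $(0,\infty)$, where $C_\gamma = \max\gamma$ is finite by continuity on the compact $[a,b]$. Consequently $\Jcost\circ\gamma$ is continuous and bounded on $[a,b]$, so the integral $\actionJ[\gamma]$ is a well-defined finite real number. This takes care of well-posedness before any inequality is claimed.

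For nonnegativity, I will use the property recorded just after the definition of $\Jcost$, namely $\Jcost(x)\ge 0$ for all $x>0$. It follows from AM--GM, $\tfrac12(x+x^{-1})\ge 1$, or equivalently from $\Jcost(x) = (x-1)^2/(2x)$. This is also Proposition~\ref{prop:Jlog}(i) read through $\xi=\log x$. Hence the integrand is pointwise nonnegative on $[a,b]$, and monotonicity of the integral gives $\actionJ[\gamma]\ge 0$.

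For the second claim, the constant path $\gamma\equiv 1$ is admissible. It is absolutely continuous, strictly positive, and has $\dot\gamma = 0 \in L^2$. Since $\Jcost(1) = \tfrac12(1+1)-1 = 0$, the integrand vanishes identically and so $\actionJ[\gamma_{\mathrm{const}=1}]=0$.

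There is no genuine obstacle here. The only point requiring care is the finiteness of $\actionJ$, which depends on the uniform positive lower bound from Remark~\ref{rem:auto-positive}: without it, $\Jcost(\gamma)\sim 1/(2\gamma)$ could fail to be integrable near a zero of $\gamma$. One could optionally add that equality $\actionJ[\gamma]=0$ holds only for $\gamma\equiv 1$. This follows because a continuous nonnegative integrand with zero integral vanishes everywhere, and $\Jcost$ vanishes only at $x=1$.
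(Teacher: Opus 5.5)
Your proof is correct and follows the paper's argument. Continuity of $\Jcost\circ\gamma$ on the compact $[a,b]$ gives integrability, pointwise nonnegativity of $\Jcost$ gives $\actionJ[\gamma]\ge 0$, and $\Jcost(1)=0$ gives the second claim. Your additions (the identity $\Jcost(x)=(x-1)^2/(2x)$, admissibility of the constant path, and the equality case $\actionJ[\gamma]=0$ only for $\gamma\equiv 1$) are all valid but go beyond what the statement requires.
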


\begin{proof}
An admissible path is continuous on the compact interval $[a,b]$ and
takes values in $(0,\infty)$, hence $\Jcost\circ\gamma$ is continuous
and therefore integrable. Since $\Jcost(x)\ge0$ for all $x>0$, its
integral is non-negative. If $\gamma\equiv1$, then
$\Jcost(\gamma(t))=\Jcost(1)=0$ for all $t$, so the integral is zero.
\end{proof}

\noindent $\actionJ$ measures the time-integrated cost of
\emph{being in} a non-ground state and plays a role analogous to a
potential term in the multiplicative coordinate. It is not an action
in the dynamical sense: its integrand has no velocity dependence and
its Euler--Lagrange equation is algebraic (Section~\ref{sec:el}).

\subsection{The kinetic action}

\begin{definition}[Kinetic action]
For a kinetically admissible path $\gamma$ on $[a,b]$, with
$\xi := \log\gamma$, the \emph{kinetic action} is
\[
  \actionA[\gamma]
    \;:=\; \int_a^b \Kkin(\dot\xi(t))\,dt
    \;=\; \int_a^b \bigl[\cosh(\dot\xi(t)) - 1\bigr]\,dt.
\]
\end{definition}

\begin{proposition}[Basic properties of $\actionA$]\label{prop:A-basic}
\begin{enumerate}[label=(\roman*),noitemsep]
  \item $\actionA[\gamma] \geq 0$.
  \item $\actionA[\gamma] = 0$ iff $\gamma$ is constant on $[a,b]$
    (any positive constant; not only $\gamma \equiv 1$).
  \item If $\|\dot\xi\|_\infty \leq 1/10$, then the kinetic action
    approximates the Newtonian $L^2$ integral with relative error
    at most $1/1188$:
    \[
      \bigl|\actionA[\gamma] - \tfrac12 \textstyle\int_a^b \dot\xi(t)^2\,dt\bigr|
      \;\le\; \tfrac{100}{99\cdot 24}\textstyle\int_a^b \dot\xi(t)^4\,dt
      \;\le\; \tfrac{1}{2376}\textstyle\int_a^b \dot\xi(t)^2\,dt.
    \]
\end{enumerate}
\end{proposition}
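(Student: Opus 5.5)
The plan is to reduce all three items to pointwise facts about $\Kkin$ from Proposition~\ref{prop:Kkin}, integrated over $[a,b]$. No variational machinery is needed. Throughout, $\gamma$ is kinetically admissible, so $\xi=\log\gamma$ is absolutely continuous and $\Kkin(\dot\xi)\in L^1([a,b])$ (Definition~\ref{def:kin-adm}). Hence every integral below is finite and we may manipulate integrands almost everywhere.

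For (i), I would use Proposition~\ref{prop:Kkin}(i): $\Kkin(\dot\xi(t))\ge 0$ for a.e.\ $t$, and the integral of a nonnegative integrable function is nonnegative.

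For (ii), the direction ``constant $\Rightarrow$ zero action'' is immediate. If $\gamma\equiv c>0$, then $\xi\equiv\log c$, so $\dot\xi=0$ and $\Kkin(0)=0$. For the converse, $\actionA[\gamma]=0$ with a nonnegative integrand forces $\Kkin(\dot\xi)=0$ a.e. By the equality case of Proposition~\ref{prop:Kkin}(i), this gives $\dot\xi=0$ a.e. This is the one step requiring care. Absolute continuity of $\xi$, established after Remark~\ref{rem:auto-positive}, gives $\xi(t)=\xi(a)+\int_a^t\dot\xi=\xi(a)$. Therefore $\xi$, and hence $\gamma=e^\xi$, is constant. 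Without absolute continuity (for example, a Cantor-type function), a.e.\ vanishing of the derivative would not suffice, so this is the point to flag.

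For (iii), I would apply the sharp quartic bound \eqref{eq:Kkin-sharp-quartic} pointwise with $v=\dot\xi(t)$. This is valid since $|\dot\xi|\le 1/10<1$ a.e. Using $1-v^2\ge 99/100$ gives $|\Kkin(\dot\xi)-\tfrac12\dot\xi^2|\le \tfrac{100}{99\cdot24}\dot\xi^4$ a.e. Integrating, with the triangle inequality for integrals, yields the first inequality. For the second, $\dot\xi^4\le \tfrac{1}{100}\dot\xi^2$ a.e., so the constant becomes $\tfrac{1}{99\cdot 24}=\tfrac1{2376}$. Dividing by $\tfrac12\int_a^b\dot\xi^2\,dt$, whenever it is nonzero, gives the stated relative error $2/2376=1/1188$. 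When it is zero, (ii) shows both sides vanish. The integrability of $\dot\xi^2$ and $\dot\xi^4$ is automatic here because $\dot\xi$ is essentially bounded.
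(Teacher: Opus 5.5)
Your proposal is correct and follows the paper's proof essentially step for step: pointwise nonnegativity of $\Kkin$ for (i), $\Kkin(\dot\xi)=0$ a.e.\ forcing $\dot\xi=0$ a.e.\ plus absolute continuity of $\xi$ for (ii), and the sharp quartic remainder applied pointwise with $(1-v^2)^{-1}\le 100/99$ and $v^4\le v^2/100$, then integrated, for (iii). Your extra remarks, on the Cantor-type counterexample and on the degenerate case $\int_a^b\dot\xi^2\,dt=0$ for the relative-error reading, are harmless refinements of the same argument.
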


\begin{proof}
(i) $\Kkin \geq 0$ pointwise. (ii) $\Kkin(\dot\xi) = 0$ a.e.\
$\iff \dot\xi = 0$ a.e., and absolute continuity of $\xi$ gives
$\xi$ constant, equivalently $\gamma$ constant. (iii) Apply the
sharp quartic remainder \eqref{eq:Kkin-sharp-quartic} pointwise at
$v = \dot\xi(t)$ with $|v|\le 1/10$, so that $(1-v^2)^{-1}\le
100/99$ and $v^4\le v^2/100$; integration yields both bounds.
\end{proof}

\subsection{Two interpolations}

The variational theorems below require an interpolation between two
admissible paths. Two natural candidates arise: \emph{arithmetic}
interpolation (appropriate for $\actionJ$) and \emph{geometric} (log-
space) interpolation (appropriate for $\actionA$).

\begin{definition}[Arithmetic interpolation]
For admissible $\gamma_1, \gamma_2$ on $[a,b]$ and $s \in [0,1]$,
\[
  \mathrm{interp}^+(\gamma_1, \gamma_2, s)(t)
    \;:=\; (1-s)\,\gamma_1(t) + s\,\gamma_2(t).
\]
\end{definition}

\begin{definition}[Geometric (log-space) interpolation]
For admissible $\gamma_1, \gamma_2$ on $[a,b]$ and $s \in [0,1]$,
\[
  \mathrm{interp}^\times(\gamma_1, \gamma_2, s)(t)
    \;:=\; \gamma_1(t)^{1-s}\,\gamma_2(t)^s
    \;=\; \exp\!\bigl((1-s)\log\gamma_1(t) + s\log\gamma_2(t)\bigr).
\]
Equivalently, in log coordinates,
$\xi_s(t) = (1-s)\xi_1(t) + s\xi_2(t)$.
\end{definition}

\begin{lemma}[Both interpolations preserve admissibility]\label{lem:interp-pos}
If $\gamma_1, \gamma_2$ are admissible on $[a,b]$ and $s \in [0,1]$,
then $\mathrm{interp}^+(\gamma_1, \gamma_2, s)$ and
$\mathrm{interp}^\times(\gamma_1, \gamma_2, s)$ are admissible on
$[a,b]$.
\end{lemma}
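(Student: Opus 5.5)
The plan is to check the three defining conditions of Definition~\ref{def:admissible} for each interpolation: absolute continuity, strict positivity on $[a,b]$, and an $L^2$ derivative. Throughout I use Remark~\ref{rem:auto-positive} to fix constants $c_1,c_2>0$ with $\gamma_i\ge c_i$. By continuity on the compact interval there are also upper bounds $M_i:=\max_{[a,b]}\gamma_i<\infty$.

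\emph{Arithmetic interpolation.} A linear combination of absolutely continuous functions is absolutely continuous. Its a.e.\ derivative is $(1-s)\dot\gamma_1+s\dot\gamma_2$, which lies in $L^2$ because $L^2$ is a vector space. Positivity is immediate: for $s\in[0,1]$ the value is a convex combination of two positive numbers, so it is at least $\min(c_1,c_2)>0$.

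\emph{Geometric interpolation.} I work in log coordinates. As noted after Remark~\ref{rem:auto-positive}, each $\xi_i=\log\gamma_i$ is absolutely continuous with $\dot\xi_i=\dot\gamma_i/\gamma_i\in L^2$. This holds because $\log$ is Lipschitz on $[c_i,M_i]$, and a Lipschitz function composed with an AC function is AC, with the chain rule valid a.e. Then $\xi_s=(1-s)\xi_1+s\xi_2$ is absolutely continuous and has derivative $(1-s)\dot\xi_1+s\dot\xi_2\in L^2$. It takes values in the compact interval $[\min\log c_i,\max\log M_i]$.

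Set $\gamma_s=\exp(\xi_s)$. Since $\exp$ is Lipschitz on that compact interval, $\gamma_s$ is absolutely continuous, and a.e.\ we have $\dot\gamma_s=\gamma_s\dot\xi_s$. The bound $|\dot\gamma_s|\le \max_i M_i\,|\dot\xi_s|$ then gives $\dot\gamma_s\in L^2$. Strict positivity is automatic, since $\gamma_s=e^{\xi_s}>0$; in fact $\gamma_s\ge c_1^{1-s}c_2^{s}$.

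The only point needing care is the chain rule for compositions with absolutely continuous functions. The cleanest route is the Lipschitz-on-the-range argument above, which relies on the uniform positive lower bound and the finite upper bound. So the main obstacle is simply making sure the range lies in a compact subset of $(0,\infty)$, where $\log$ and $\exp$ are both Lipschitz; Remark~\ref{rem:auto-positive} supplies exactly this.
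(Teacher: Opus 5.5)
Your proof is correct and follows essentially the same route as the paper's. For $\mathrm{interp}^+$ you use linear combinations together with the convex-combination lower bound, and for $\mathrm{interp}^\times$ you pass to $\xi_s=(1-s)\xi_1+s\xi_2$, exponentiate, and use $\dot\gamma_s=\gamma_s\dot\xi_s$ with $\gamma_s$ bounded above and $\gamma_s\ge c_1^{1-s}c_2^{s}$; your only addition is the explicit Lipschitz-on-a-compact-range justification of absolute continuity and the chain rule for $\log\gamma_i$ and $e^{\xi_s}$, which the paper leaves implicit.
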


\begin{proof}
For $\mathrm{interp}^+$, absolute continuity and the $L^2$ derivative
condition are preserved under linear combinations, and positivity holds
since $(1-s)\gamma_1 + s\gamma_2 \geq \min(\gamma_1,\gamma_2) > 0$.

For $\mathrm{interp}^\times$, write $\xi_i := \log\gamma_i$. Since each
$\gamma_i$ is bounded away from $0$ and absolutely continuous, $\xi_i$
is absolutely continuous with $\dot\xi_i = \dot\gamma_i/\gamma_i \in
L^2([a,b])$. Then $\xi_s := (1-s)\xi_1 + s\xi_2$ is absolutely
continuous with $\dot\xi_s \in L^2$, and
$\gamma_s := \mathrm{interp}^\times(\gamma_1,\gamma_2,s) = e^{\xi_s}$
is absolutely continuous with $\dot\gamma_s = \gamma_s\,\dot\xi_s$ a.e.
Since $\gamma_1,\gamma_2$ are bounded away from $0$, there exist
$c_1,c_2>0$ with $\gamma_i(t)\ge c_i$ on $[a,b]$, hence
$\gamma_s(t)=\gamma_1(t)^{1-s}\gamma_2(t)^s\ge c_1^{1-s}c_2^s>0$, so
$\gamma_s$ is bounded away from $0$. On the compact interval $[a,b]$,
$\gamma_s$ is continuous and strictly positive, hence also bounded
above, so $\dot\gamma_s=\gamma_s\dot\xi_s\in L^2([a,b])$.
\end{proof}

\begin{lemma}[Geometric interpolation preserves finite kinetic action]\label{lem:interp-kin}
If $\gamma_1,\gamma_2$ are kinetically admissible on $[a,b]$ and
$s\in[0,1]$, then $\mathrm{interp}^\times(\gamma_1,\gamma_2,s)$ is
kinetically admissible on $[a,b]$.
\end{lemma}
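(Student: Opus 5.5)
By Lemma~\ref{lem:interp-pos}, $\gamma_s := \mathrm{interp}^\times(\gamma_1,\gamma_2,s)$ is already admissible. It is absolutely continuous and positive, and its log-coordinate is $\xi_s=(1-s)\xi_1+s\xi_2$, which is absolutely continuous with
\[
  \dot\xi_s=(1-s)\dot\xi_1+s\dot\xi_2 \quad \text{a.e.\ on } [a,b].
\]
So the only thing left to prove is the integrability condition of Definition~\ref{def:kin-adm}, namely $\Kkin(\dot\xi_s)\in L^1([a,b])$.

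The plan is a pointwise convexity bound. By Proposition~\ref{prop:Kkin}(iii), $\Kkin$ is convex on $\R$. Hence for almost every $t$,
\[
  0\;\le\;\Kkin(\dot\xi_s(t))\;\le\;(1-s)\,\Kkin(\dot\xi_1(t))+s\,\Kkin(\dot\xi_2(t)).
\]
The lower bound is Proposition~\ref{prop:Kkin}(i).

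Next I check measurability. $\dot\xi_s$ is measurable as a linear combination of measurable functions, and $\Kkin$ is continuous, so $\Kkin\circ\dot\xi_s$ is measurable.

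The right-hand side of the bound is a nonnegative combination of two $L^1$ functions, by the kinetic admissibility of $\gamma_1$ and $\gamma_2$, so it is integrable. Comparison of nonnegative measurable functions then gives $\Kkin(\dot\xi_s)\in L^1$, with the explicit estimate
\[
  \actionA[\gamma_s]\;\le\;(1-s)\actionA[\gamma_1]+s\actionA[\gamma_2]\;<\;\infty.
\]
This estimate is also the weak form of the convexity studied in Section~\ref{sec:convexity}.

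There is no real obstacle here. The only point needing care is that the convexity inequality must be applied to the a.e.-defined derivatives. This requires the derivative of the interpolant to equal the interpolation of the derivatives almost everywhere, which holds by linearity of differentiation on the full-measure set where both $\xi_1$ and $\xi_2$ are differentiable. The endpoint cases $s\in\{0,1\}$ are trivial.
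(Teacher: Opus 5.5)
Your proof is correct and follows the paper's argument: pointwise convexity of $\Kkin$ applied a.e.\ to $\dot\xi_s=(1-s)\dot\xi_1+s\dot\xi_2$, with the right-hand side integrable by kinetic admissibility of $\gamma_1,\gamma_2$. You also cite Lemma~\ref{lem:interp-pos} for admissibility and check nonnegativity and measurability explicitly; the paper leaves these steps implicit.
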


\begin{proof}
Let $\xi_i=\log\gamma_i$ and $\xi_s=(1-s)\xi_1+s\xi_2$, so
$\dot\xi_s=(1-s)\dot\xi_1+s\dot\xi_2$ a.e. By convexity of $\Kkin$,
\[
  \Kkin(\dot\xi_s(t))
    \le (1-s)\Kkin(\dot\xi_1(t)) + s\Kkin(\dot\xi_2(t))
  \qquad\text{for a.e.\ }t.
\]
The right-hand side lies in $L^1([a,b])$ by kinetic admissibility of
$\gamma_1,\gamma_2$, hence so does $\Kkin(\dot\xi_s)$.
\end{proof}

\section{Convexity and the Free-Action Principle}\label{sec:convexity}

We now reach the mathematical core of the paper. Having assembled the
cost functions and the path spaces, we establish the convexity
properties of the static and kinetic actions, derive the free-action
principle in chord form, and extract its quantitative consequences:
the closed-form fixed-endpoint minimizer, the Bregman--Pythagorean
identity with its Friedrichs--Poincar\'e bound, the perspective
convexity of the minimum-action profile, and the dually flat
reformulation.

\subsection{Static convexity}

\begin{theorem}[Convexity of $\actionJ$ under arithmetic interpolation]\label{thm:actionJ-convex}
For admissible $\gamma_1, \gamma_2$ on $[a,b]$ and $s \in [0,1]$,
\[
  \actionJ[\mathrm{interp}^+(\gamma_1, \gamma_2, s)]
    \;\leq\; (1-s)\,\actionJ[\gamma_1] + s\,\actionJ[\gamma_2].
\]
\end{theorem}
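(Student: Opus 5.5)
The proof is a pointwise convexity argument followed by integration. By Lemma~\ref{lem:interp-pos}, the path $\gamma_s := \mathrm{interp}^+(\gamma_1,\gamma_2,s)$ is admissible, so $\actionJ[\gamma_s]$ is well defined. By the argument of Proposition~\ref{prop:S-basic}, its integrand $\Jcost\circ\gamma_s$ is continuous on $[a,b]$, hence integrable. The same holds for $\Jcost\circ\gamma_1$ and $\Jcost\circ\gamma_2$. So all three integrals in the claimed inequality are finite real numbers, and no integrability issues arise.

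Next, fix $t\in[a,b]$. The values $\gamma_1(t)$ and $\gamma_2(t)$ lie in the convex set $(0,\infty)$, and $\gamma_s(t)=(1-s)\gamma_1(t)+s\gamma_2(t)$ is their convex combination. Theorem~\ref{thm:Jconvex} gives convexity of $\Jcost$ on $(0,\infty)$; strict convexity is more than we need. Hence
\[
  \Jcost(\gamma_s(t)) \;\le\; (1-s)\,\Jcost(\gamma_1(t)) + s\,\Jcost(\gamma_2(t))
  \qquad\text{for every } t\in[a,b].
\]

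Finally, integrate this pointwise inequality over $[a,b]$. Monotonicity and linearity of the Lebesgue (here, Riemann) integral then yield $\actionJ[\gamma_s]\le (1-s)\actionJ[\gamma_1]+s\actionJ[\gamma_2]$.

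There is no genuine obstacle. The only point needing care is that the arithmetic interpolation stays in the domain $(0,\infty)$ of $\Jcost$, and Lemma~\ref{lem:interp-pos} already provides this. The endpoint cases $s\in\{0,1\}$ are trivial equalities. One could also note, as an optional remark, that strict convexity of $\Jcost$ gives strict inequality for $s\in(0,1)$ whenever $\gamma_1\ne\gamma_2$: by continuity they then differ on a set of positive measure, on which the pointwise inequality is strict.
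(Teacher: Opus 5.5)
Your proposal is correct and follows the paper's proof exactly: apply pointwise convexity of $\Jcost$ (Theorem~\ref{thm:Jconvex}) at each $t$, then integrate over $[a,b]$. Your integrability check via Lemma~\ref{lem:interp-pos} and Proposition~\ref{prop:S-basic}, and your optional strict-inequality remark, are both correct; the paper leaves these points implicit.
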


\begin{proof}
Pointwise convexity of $\Jcost$
(Theorem~\ref{thm:Jconvex}): at every $t$,
$\Jcost((1-s)\gamma_1(t) + s\gamma_2(t)) \leq
(1-s)\Jcost(\gamma_1(t)) + s\Jcost(\gamma_2(t))$. Integrate.
\end{proof}

\subsection{Kinetic convexity (the central theorem)}

The kinetic action $\actionA$ is \emph{not} in general convex under
arithmetic interpolation in $\gamma$. The correct interpolation is
geometric: in log coordinates, this is affine, and convexity of the
integrand $\Kkin(\dot\xi)$ in $\dot\xi$ propagates immediately.

\begin{theorem}[Strong convexity of $\actionA$ under geometric interpolation]\label{thm:actionA-convex}
For kinetically admissible $\gamma_1, \gamma_2$ on $[a,b]$, write
$\xi_i := \log\gamma_i$. For every $s \in [0,1]$,
\begin{equation}\label{eq:strong-convex-A}
  \actionA[\mathrm{interp}^\times(\gamma_1, \gamma_2, s)]
    \;\leq\; (1-s)\,\actionA[\gamma_1] + s\,\actionA[\gamma_2]
    \;-\; \frac{s(1-s)}{2}\int_a^b \bigl(\dot\xi_1(t) - \dot\xi_2(t)\bigr)^2\,dt.
\end{equation}
In particular, $\actionA$ is convex along geometric interpolation,
and inequality \eqref{eq:strong-convex-A} is strict for
$s \in (0,1)$ unless $\dot\xi_1 = \dot\xi_2$ a.e.\ on $[a,b]$;
equivalently, $\gamma_2/\gamma_1$ is constant on $[a,b]$.
\end{theorem}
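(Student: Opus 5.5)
The plan is to reduce everything to a pointwise strong-convexity inequality for $\Kkin$ and then integrate. Write $\xi_s := (1-s)\xi_1 + s\xi_2$, so that by the definition of geometric interpolation $\log\mathrm{interp}^\times(\gamma_1,\gamma_2,s) = \xi_s$ and $\dot\xi_s = (1-s)\dot\xi_1 + s\dot\xi_2$ a.e. By Lemma~\ref{lem:interp-kin} the interpolated path is kinetically admissible, so its action is finite. Both actions on the right-hand side are finite, and the slack term is finite because $\dot\xi_i\in L^2$. Every quantity in \eqref{eq:strong-convex-A} is therefore a finite real number, and we may subtract freely.

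The key pointwise claim is that for all real $v,w$ and $s\in[0,1]$,
\[
  \Kkin\bigl((1-s)v+sw\bigr) \le (1-s)\Kkin(v)+s\Kkin(w)-\tfrac{s(1-s)}{2}(v-w)^2 .
\]
To prove it, set $g(v):=\Kkin(v)-\tfrac12 v^2$. Then $g''(v)=\cosh v-1\ge 0$ by Proposition~\ref{prop:Kkin}(iii), so $g$ is convex and satisfies $g((1-s)v+sw)\le(1-s)g(v)+sg(w)$. Adding back the quadratic parts uses the elementary identity
\[
  (1-s)v^2+sw^2-\bigl((1-s)v+sw\bigr)^2 = s(1-s)(v-w)^2,
\]
which yields the displayed inequality. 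Apply it at $v=\dot\xi_1(t)$, $w=\dot\xi_2(t)$ for a.e.\ $t$, and integrate over $[a,b]$ using linearity of the integral on $L^1$ functions. This gives \eqref{eq:strong-convex-A}. Ordinary convexity follows by dropping the nonnegative slack term.

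For strictness, take $s\in(0,1)$. If $\dot\xi_1\ne\dot\xi_2$ on a set of positive measure, then $\int(\dot\xi_1-\dot\xi_2)^2>0$, and the slack term $\tfrac{s(1-s)}{2}\int(\dot\xi_1-\dot\xi_2)^2$ is strictly positive. Hence the action of the interpolated path is strictly below the chord $(1-s)\actionA[\gamma_1]+s\actionA[\gamma_2]$. Conversely, if $\dot\xi_1=\dot\xi_2$ a.e., then $\dot\xi_s=\dot\xi_1$ a.e., so the interpolated path has action $\actionA[\gamma_1]=\actionA[\gamma_2]$ and equality holds. For the equivalence with the ratio: $\xi_2-\xi_1=\log(\gamma_2/\gamma_1)$ is absolutely continuous with derivative $\dot\xi_2-\dot\xi_1$. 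This derivative vanishes a.e.\ if and only if $\log(\gamma_2/\gamma_1)$ is constant, i.e.\ if and only if $\gamma_2/\gamma_1$ is constant on $[a,b]$.

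No step is a real obstacle. The only points needing care are bookkeeping: checking integrability so that the integrated inequality is not of the form $\infty-\infty$, and making sure the strictness statement is read as concerning the chord inequality.
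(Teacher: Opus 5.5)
Your proof of \eqref{eq:strong-convex-A} itself is correct. Writing $\Kkin = g + \tfrac12 v^2$ with $g''=\cosh v-1\ge 0$ and then using the variance identity is a clean alternative to the paper's Taylor-at-$v_s$ argument with Lagrange remainders. The gap is in the strictness clause.

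The theorem asserts that inequality \eqref{eq:strong-convex-A} is strict for $s\in(0,1)$ whenever $\dot\xi_1\neq\dot\xi_2$ on a set of positive measure. That inequality is the strong-convexity one, \emph{with} the slack term $\tfrac{s(1-s)}{2}\int_a^b(\dot\xi_1-\dot\xi_2)^2\,dt$, and its strictness is exactly what the paper's proof establishes. You instead chose to read the claim as strictness of the plain chord inequality. That weaker statement follows trivially from positivity of the slack. It says nothing about whether the slack-corrected inequality can hold with equality.

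Strictness of \eqref{eq:strong-convex-A} is a genuinely additional fact. It needs the modulus $1/2$ never to be attained off the diagonal, i.e.\ $\cosh u-1>0$ for $u\neq0$. For a purely quadratic integrand $\tfrac12 v^2$, the same inequality holds with equality for every pair of paths.

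The fix fits directly into your framework. Since $g''(v)=\cosh v-1$ vanishes only at $v=0$, $g'$ is strictly increasing, so $g$ is \emph{strictly} convex on $\R$. Hence for $s\in(0,1)$ and $v\neq w$ your pointwise inequality is strict.

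Now take the nonnegative pointwise gap $(1-s)\Kkin(\dot\xi_1)+s\Kkin(\dot\xi_2)-\tfrac{s(1-s)}{2}(\dot\xi_1-\dot\xi_2)^2-\Kkin(\dot\xi_s)$. It is strictly positive on the positive-measure set where $\dot\xi_1\neq\dot\xi_2$. You have already checked that every term is integrable, so integrating gives strict inequality in \eqref{eq:strong-convex-A}.

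The paper reaches the same conclusion through the Bregman remainder identity $\Delta(v,w)-\tfrac12(v-w)^2=\int_w^v(v-u)(\cosh u-1)\,du>0$ for $v\neq w$. Your decomposition packages this more economically, but only if you invoke strict convexity of $g$, not merely its convexity.
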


\begin{proof}
Set $\xi_s := (1-s)\xi_1 + s\xi_2$, so $\xi_s$ is the log of
$\mathrm{interp}^\times(\gamma_1, \gamma_2, s)$ and
$\dot\xi_s = (1-s)\dot\xi_1 + s\dot\xi_2$ a.e. The kinetic
integrand $\Kkin(v) = \cosh(v) - 1$ satisfies
$\Kkin''(v) = \cosh(v) \ge 1$ on $\R$, hence is $1$-strongly
convex: for all $v_1, v_2 \in \R$ and $s \in [0,1]$, with
$v_s := (1-s)v_1 + sv_2$,
\begin{equation}\label{eq:Kkin-strong-convex}
  \Kkin(v_s)
    \;\le\; (1-s)\,\Kkin(v_1) + s\,\Kkin(v_2)
       \;-\; \tfrac{s(1-s)}{2}\,(v_1-v_2)^2.
\end{equation}
Indeed, Taylor's theorem at $v_s$ applied separately to $v_1$ and
$v_2$ gives, for some $\zeta_i$ between $v_s$ and $v_i$,
\(
  \Kkin(v_i) = \Kkin(v_s) + \Kkin'(v_s)(v_i - v_s)
    + \tfrac12\,\Kkin''(\zeta_i)(v_i - v_s)^2.
\)
Combining with weights $(1-s)$ and $s$, the linear $\Kkin'(v_s)$
contribution vanishes (because $v_s$ is the weighted mean), and the
remainders are bounded below by $\tfrac12(v_i-v_s)^2$ via
$\Kkin''\ge 1$. Using
$v_1-v_s = s(v_1-v_2)$ and $v_2-v_s = (1-s)(v_2-v_1)$, the
quadratic floor sums to $\tfrac12 s(1-s)(v_1-v_2)^2$, which
yields \eqref{eq:Kkin-strong-convex}.

Apply \eqref{eq:Kkin-strong-convex} pointwise at
$v_i = \dot\xi_i(t)$ for a.e.\ $t$. The right-hand side is
integrable: the kinetic terms by kinetic admissibility of
$\gamma_1, \gamma_2$ (Definition~\ref{def:kin-adm}), and the
quadratic term because $\dot\xi_i \in L^2([a,b])$
(Definition~\ref{def:admissible}). Lemma~\ref{lem:interp-kin}
gives $\Kkin(\dot\xi_s) \in L^1$. Integrating yields
\eqref{eq:strong-convex-A}.

For the equality clause, suppose equality holds in
\eqref{eq:strong-convex-A} for some $s_0 \in (0,1)$. Then the
pointwise inequality \eqref{eq:Kkin-strong-convex}, applied at
$v_i=\dot\xi_i(t)$, must hold with equality for a.e.\
$t\in[a,b]$: if the pointwise gap were strictly positive on a set
of positive measure, integration would give a strict integral gap,
contradicting integral equality. We now show that pointwise
equality in \eqref{eq:Kkin-strong-convex} at $s_0\in(0,1)$ forces
$v_1=v_2$.

Define, for $v,w\in\R$, the non-negative quantity
\[
  \Delta(v,w) \;:=\; \Kkin(v) - \Kkin(w) - \Kkin'(w)(v-w)
  \;=\; \int_w^v (v-u)\cosh(u)\,du,
\]
by Taylor's theorem with integral remainder for $\Kkin$ at $w$.
The identity
$(1-s_0)(v_1-v_s) + s_0(v_2-v_s) = 0$ kills the linear term in
the weighted combination, giving the exact identity
\[
  (1-s_0)\Kkin(v_1) + s_0 \Kkin(v_2) - \Kkin(v_s)
  \;=\; (1-s_0)\,\Delta(v_1,v_s) + s_0\,\Delta(v_2,v_s).
\]
Combining with $(v_1-v_s)^2=s_0^2(v_1-v_2)^2$ and
$(v_2-v_s)^2=(1-s_0)^2(v_1-v_2)^2$, the strong-convex bound
\eqref{eq:Kkin-strong-convex} rearranges to the manifestly
non-negative expression
\[
  (1-s_0)\bigl[\Delta(v_1,v_s) - \tfrac12 (v_1-v_s)^2\bigr]
  + s_0\bigl[\Delta(v_2,v_s) - \tfrac12 (v_2-v_s)^2\bigr]
  \;\ge\; 0,
\]
where each bracket is $\ge 0$ since $\cosh\ge 1$ in the integrand.
Since $s_0,(1-s_0)>0$, equality forces each bracket to vanish:
\[
  \Delta(v_i,v_s) \;=\; \tfrac12 (v_i-v_s)^2,\qquad i=1,2.
\]

We now show that $\Delta(v,w) = \tfrac12(v-w)^2$ implies $v=w$.
Subtracting,
\[
  \Delta(v,w) - \tfrac12(v-w)^2 = \int_w^v (v-u)\,[\cosh(u)-1]\,du.
\]
We claim this is \emph{strictly positive} whenever $v\ne w$,
irrespective of the ordering of $v$ and $w$. Indeed, on the open
interval with endpoints $w$ and $v$ the factor $(v-u)$ has constant
sign, and $\cosh u - 1\ge 0$ vanishes only at the single point
$u=0$; hence the product $(v-u)[\cosh u-1]$ has constant sign and is
nonzero off the null set $\{u=0\}$. If $w<v$ then $v-u>0$ on
$(w,v)$, so the integrand is $\ge0$ and positive a.e., giving
$\int_w^v(\cdots)\,du>0$. If $w>v$ then $v-u<0$ on $(v,w)$, so
$\int_v^w(\cdots)\,du<0$ and therefore
$\int_w^v(\cdots)\,du=-\int_v^w(\cdots)\,du>0$. In both cases the
quantity is strictly positive, so
$\Delta(v,w) > \tfrac12(v-w)^2$ for $v\ne w$, and the only solution
of $\Delta(v,w) = \tfrac12(v-w)^2$ is $v=w$.

Therefore $v_1 = v_s = v_2$, i.e.\ $v_1=v_2$. Applying this
pointwise, $\dot\xi_1(t)=\dot\xi_2(t)$ for a.e.\ $t\in[a,b]$. By
absolute continuity, $\xi_2-\xi_1$ is constant on $[a,b]$, i.e.\
$\gamma_2/\gamma_1$ is constant.
\end{proof}

\begin{remark}[The quantity $\Delta(v,w)$ is the cosh Bregman divergence]\label{rem:bregman-preview}
The quantity $\Delta(v,w)$ introduced in the equality clause of
the preceding proof is, up to notation, the \emph{cosh Bregman
divergence} $D_\Kkin(v\|w)$ formally defined in
Section~\ref{subsec:pythagorean} (Definition~\ref{def:bregman}).
The properties exploited above -- non-negativity, integral
representation, and the strict quadratic lower bound -- are
collected later in Proposition~\ref{prop:bregman}; we used them
inline here to keep the proof of
Theorem~\ref{thm:actionA-convex} self-contained and free of
forward references.
\end{remark}

\begin{remark}[Mechanism of the convexity theorem]\label{rem:convexity-mechanism}
The proof uses only two ingredients: (a) the log-change of
coordinates $\gamma \mapsto \xi := \log\gamma$, which converts the
multiplicative interpolation on $\Rplus$ into the affine
interpolation $\xi_s = (1-s)\xi_1 + s\xi_2$ on $\R$; and (b)
pointwise convexity of $\Kkin$, propagated by integration. The
argument is structurally a Jensen-type estimate \emph{after} the
log-change of coordinates -- no Fr\'echet differentiation, no
calculus of variations, no analytic estimate beyond pointwise
convexity. The strength comes from matching the interpolation to
the d'Alembert log-cost; Theorem~\ref{thm:pla} then converts this
strong convexity into global minimality via a single chord check.
\end{remark}

\subsection{The convex free-action principle}

The next two theorems give the convex form of the principle of least
action in the \emph{free sector}, i.e., for the kinetic action
$\actionA$ with no potential term. Their hypothesis is the weakest
possible local-minimality condition: the action does not strictly
decrease along a single positive geometric-interpolation step toward
any competitor.

\begin{theorem}[Local-min implies global-min, kinetic form]\label{thm:local-global}
Let $\gamma_{\mathrm{geo}}$ and $\gamma_{\mathrm{other}}$ be
kinetically admissible paths on $[a,b]$ sharing endpoints. If there exists
$s_0 \in (0,1]$ such that
\[
  \actionA[\gamma_{\mathrm{geo}}]
    \;\leq\; \actionA[\mathrm{interp}^\times(\gamma_{\mathrm{geo}}, \gamma_{\mathrm{other}}, s_0)],
\]
then
\begin{equation}\label{eq:local-global-quant}
  \actionA[\gamma_{\mathrm{geo}}]
    \;\leq\; \actionA[\gamma_{\mathrm{other}}]
       \;-\; \frac{1 - s_0}{2}\int_a^b \bigl(\dot\xi_{\mathrm{geo}}(t) - \dot\xi_{\mathrm{other}}(t)\bigr)^2\,dt,
\end{equation}
where $\xi_\bullet := \log\gamma_\bullet$. In particular,
$\actionA[\gamma_{\mathrm{geo}}] \le \actionA[\gamma_{\mathrm{other}}]$.
\emph{Moreover, if $s_0 \in (0,1)$, then the inequality is strict
unless $\gamma_{\mathrm{other}} \equiv \gamma_{\mathrm{geo}}$ on
$[a,b]$.}
\end{theorem}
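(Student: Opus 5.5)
Set $\gamma_1:=\gamma_{\mathrm{geo}}$, $\gamma_2:=\gamma_{\mathrm{other}}$, $\xi_i:=\log\gamma_i$, and $Q:=\int_a^b(\dot\xi_1-\dot\xi_2)^2\,dt$. The plan is to feed the hypothesis into the strong-convexity inequality \eqref{eq:strong-convex-A} of Theorem~\ref{thm:actionA-convex} at $s=s_0$ and then divide by $s_0$. All quantities are finite: $\actionA[\gamma_i]<\infty$ by kinetic admissibility, $Q<\infty$ since $\dot\xi_i\in L^2$, and the interpolant has finite action by Lemma~\ref{lem:interp-kin}.

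\emph{Main chain.} Combining the hypothesis with \eqref{eq:strong-convex-A} gives
\[
  \actionA[\gamma_1]\le \actionA[\mathrm{interp}^\times(\gamma_1,\gamma_2,s_0)]
  \le (1-s_0)\actionA[\gamma_1]+s_0\actionA[\gamma_2]-\tfrac{s_0(1-s_0)}{2}Q .
\]
Subtracting $(1-s_0)\actionA[\gamma_1]$ leaves $s_0\actionA[\gamma_1]\le s_0\actionA[\gamma_2]-\tfrac{s_0(1-s_0)}{2}Q$. Dividing by $s_0>0$ yields \eqref{eq:local-global-quant}. Since $Q\ge0$ and $1-s_0\ge0$, the weak inequality $\actionA[\gamma_{\mathrm{geo}}]\le\actionA[\gamma_{\mathrm{other}}]$ follows. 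When $s_0=1$ the statement is just the hypothesis read at the endpoint, and the slack term vanishes.

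\emph{Strictness.} Take $s_0\in(0,1)$ and suppose equality $\actionA[\gamma_1]=\actionA[\gamma_2]$ holds. Then \eqref{eq:local-global-quant} forces $\tfrac{1-s_0}{2}Q\le0$, and since $1-s_0>0$ this gives $Q=0$. Hence $\dot\xi_1=\dot\xi_2$ a.e. By absolute continuity, $\xi_2-\xi_1$ is constant on $[a,b]$. The shared endpoint $\gamma_1(a)=\gamma_2(a)$ makes that constant zero, so $\gamma_{\mathrm{other}}\equiv\gamma_{\mathrm{geo}}$.

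The only delicate step is this last one. It is where the shared-endpoint hypothesis enters, upgrading ``ratio constant'' (the equality clause of Theorem~\ref{thm:actionA-convex}) to actual equality of the paths. It also requires $s_0<1$, so that the slack coefficient $1-s_0$ is strictly positive.
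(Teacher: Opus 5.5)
Your proof is correct and follows the paper's argument. You chain the hypothesis with the strong-convexity bound \eqref{eq:strong-convex-A} at $s=s_0$, subtract $(1-s_0)\actionA[\gamma_{\mathrm{geo}}]$, divide by $s_0$, and get strictness from positivity of the slack. The only difference is that the paper argues strictness directly (a nonzero $H^1_0$ difference $\xi_{\mathrm{other}}-\xi_{\mathrm{geo}}$ has nonzero derivative), while you argue it contrapositively ($Q=0$ forces a constant log-ratio, which the shared endpoint makes zero).
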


\begin{remark}[Vacuity of the boundary case $s_0=1$]\label{rem:s0-boundary}
At $s_0 = 1$, the geometric interpolant
$\mathrm{interp}^\times(\gamma_{\mathrm{geo}},\gamma_{\mathrm{other}},1)$
collapses to $\gamma_{\mathrm{other}}$, so the hypothesis of
Theorem~\ref{thm:local-global} reduces to
$\actionA[\gamma_{\mathrm{geo}}]\le\actionA[\gamma_{\mathrm{other}}]$,
which is identical to its conclusion. The slack
$\tfrac{1-s_0}{2}\int(\dot\xi_{\mathrm{geo}} -
\dot\xi_{\mathrm{other}})^2 dt$ in \eqref{eq:local-global-quant}
also vanishes, so the strictness clause is vacuous at $s_0=1$. The
boundary value $s_0=1$ is retained in
Theorem~\ref{thm:local-global} only because the quantitative bound
\eqref{eq:local-global-quant} remains a (trivially true) identity
there; the nontrivial mathematical content lives entirely in
$s_0\in(0,1)$. For exactly this reason the global-minimality
principle Theorem~\ref{thm:pla} and its converse
(Remark~\ref{rem:pla-iff}) are stated with the \emph{interior} range
$s_0\in(0,1)$, which avoids the circularity in which the $s_0=1$
chord inequality would coincide with the conclusion itself
(Remark~\ref{rem:pla-meaning}).
\end{remark}

\begin{proof}
By Theorem~\ref{thm:actionA-convex} (the strong-convexity bound
\eqref{eq:strong-convex-A}) with $s = s_0$,
\[
  \actionA[\mathrm{interp}^\times(\gamma_{\mathrm{geo}}, \gamma_{\mathrm{other}}, s_0)]
    \;\leq\; (1-s_0)\,\actionA[\gamma_{\mathrm{geo}}]
        + s_0\,\actionA[\gamma_{\mathrm{other}}]
        - \frac{s_0(1-s_0)}{2}\!\int_a^b\!\bigl(\dot\xi_{\mathrm{geo}}-\dot\xi_{\mathrm{other}}\bigr)^2 dt.
\]
Chain with the hypothesis, subtract
$(1-s_0)\actionA[\gamma_{\mathrm{geo}}]$, and divide by $s_0 > 0$ to
obtain \eqref{eq:local-global-quant}.

For the strictness statement, assume $s_0 \in (0,1)$ and
$\gamma_{\mathrm{other}} \not\equiv \gamma_{\mathrm{geo}}$. The
shared-endpoint hypothesis forces
$\xi_{\mathrm{other}} - \xi_{\mathrm{geo}} \in H^1_0([a,b])$, so by
absolute continuity its derivative cannot be identically zero a.e.\
(otherwise the difference would be constant, hence zero by the
endpoint condition). Therefore
$\int_a^b(\dot\xi_{\mathrm{geo}} - \dot\xi_{\mathrm{other}})^2\,dt > 0$,
and the slack $\tfrac{1-s_0}{2}\int(\dot\xi_{\mathrm{geo}} -
\dot\xi_{\mathrm{other}})^2\,dt$ in \eqref{eq:local-global-quant} is
strictly positive, yielding strict inequality. At $s_0 = 1$ the
slack vanishes, so the conclusion reduces to the hypothesis.
\end{proof}

\begin{theorem}[Free-action principle, chord form]\label{thm:pla}
Let $\gamma_{\mathrm{geo}}$ be a kinetically admissible path on
$[a,b]$. Suppose that for every kinetically admissible competitor
$\gamma_{\mathrm{other}}$ sharing endpoints with $\gamma_{\mathrm{geo}}$
there exists some \emph{interior} chord parameter
$s_0 \in (0,1)$ with
\[
  \actionA[\gamma_{\mathrm{geo}}]
    \;\leq\; \actionA[\mathrm{interp}^\times(\gamma_{\mathrm{geo}}, \gamma_{\mathrm{other}}, s_0)].
\]
Then $\gamma_{\mathrm{geo}}$ is a global minimizer of $\actionA$ among
kinetically admissible competitors sharing its endpoints.
\end{theorem}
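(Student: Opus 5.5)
The plan is to reduce Theorem~\ref{thm:pla} to a pointwise-in-competitor application of Theorem~\ref{thm:local-global}. Fix an arbitrary kinetically admissible competitor $\gamma_{\mathrm{other}}$ sharing endpoints with $\gamma_{\mathrm{geo}}$. By hypothesis there is some $s_0\in(0,1)$, which may depend on $\gamma_{\mathrm{other}}$, such that $\actionA[\gamma_{\mathrm{geo}}]\le\actionA[\mathrm{interp}^\times(\gamma_{\mathrm{geo}},\gamma_{\mathrm{other}},s_0)]$. Note that the right-hand side is well defined and finite by Lemma~\ref{lem:interp-kin}. This is exactly the hypothesis of Theorem~\ref{thm:local-global} with this $s_0$, so \eqref{eq:local-global-quant} gives
\[
  \actionA[\gamma_{\mathrm{geo}}]\le\actionA[\gamma_{\mathrm{other}}]-\tfrac{1-s_0}{2}\int_a^b(\dot\xi_{\mathrm{geo}}-\dot\xi_{\mathrm{other}})^2\,dt\le\actionA[\gamma_{\mathrm{other}}].
\]
The dependence of $s_0$ on the competitor is harmless, since the conclusion for each competitor involves no uniform constant.

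Since $\gamma_{\mathrm{other}}$ was arbitrary, $\actionA[\gamma_{\mathrm{geo}}]\le\inf\actionA[\gamma_{\mathrm{other}}]$ over the endpoint class. Because $\gamma_{\mathrm{geo}}$ itself belongs to that class, it attains the infimum and is a global minimizer. As a bonus, the strictness clause of Theorem~\ref{thm:local-global} applies because $s_0$ is interior. It shows that the minimizer is strict: every competitor $\gamma_{\mathrm{other}}\not\equiv\gamma_{\mathrm{geo}}$ has strictly larger action. The reason is that the shared endpoints force $\xi_{\mathrm{other}}-\xi_{\mathrm{geo}}\in H^1_0$, which is nonzero, so the $L^2$ slack is positive. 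I would record this uniqueness as a remark rather than as part of the statement.

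There is no substantial obstacle here; the argument is a direct quantifier unpacking. The only point needing care is that the hypothesis is applied to the specific interpolant. One must check that $\mathrm{interp}^\times(\gamma_{\mathrm{geo}},\gamma_{\mathrm{other}},s_0)$ is kinetically admissible, which follows from Lemmas~\ref{lem:interp-pos} and~\ref{lem:interp-kin}, so that the action comparison makes sense. The interpolant also shares the endpoints, although that is not needed for the bound.
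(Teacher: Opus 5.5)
Your proposal is correct and follows the paper's own argument: fix each competitor, use the hypothesis with its (competitor-dependent) $s_0\in(0,1)$, and invoke Theorem~\ref{thm:local-global}. Your additional checks are sound. The interpolant is kinetically admissible, and the strictness remark is valid because $s_0$ is interior; the paper makes neither check explicit in this one-line proof.
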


\begin{proof}
Apply Theorem~\ref{thm:local-global} for each such
$\gamma_{\mathrm{other}}$.
\end{proof}

\begin{remark}[Chord condition is a characterization of global minimality]\label{rem:pla-iff}
The chord condition of Theorem~\ref{thm:pla} is in fact equivalent to
global minimality, not merely sufficient. The non-trivial direction
(sufficiency) is Theorem~\ref{thm:pla} itself. The converse is
elementary: if $\gamma_{\mathrm{geo}}$ is a global minimizer of
$\actionA$ on the kinetically admissible class with fixed endpoints,
then for every kinetically admissible competitor
$\gamma_{\mathrm{other}}$ sharing the endpoints and every
$s \in (0,1)$, the interpolant
$\mathrm{interp}^\times(\gamma_{\mathrm{geo}}, \gamma_{\mathrm{other}}, s)$
is itself a kinetically admissible competitor sharing the endpoints
(Lemma~\ref{lem:interp-pos} together with
Lemma~\ref{lem:interp-kin}), so global minimality of
$\gamma_{\mathrm{geo}}$ gives
$\actionA[\gamma_{\mathrm{geo}}] \le \actionA[\mathrm{interp}^\times(\cdots, s)]$
trivially. Theorem~\ref{thm:pla} therefore gives a one-line
characterization of global minimality of $\actionA$ via a
directional inequality, with no calculus involved on either side.
\end{remark}

\begin{remark}[On the meaning of the free-sector claim]\label{rem:pla-meaning}
The hypothesis of Theorem~\ref{thm:pla} is a one-sided \emph{chord}
condition: for each competitor, the action does not strictly
\emph{decrease} along at least one \emph{interior} positive
geometric-interpolation step toward that competitor. This is exactly
the classical first-order optimality criterion for a convex
functional -- a point is a global minimizer iff every admissible
one-sided directional variation is non-negative -- specialized to
geometric interpolation. The mathematical content of
Theorem~\ref{thm:pla} is therefore the strong convexity of
$\actionA$ along $\mathrm{interp}^\times$
(Theorem~\ref{thm:actionA-convex}), not a new variational mechanism;
no Euler--Lagrange equation, Fr\'echet derivative, or
second-variation condition is invoked, and the genuine quantitative
output is the explicit Jensen minimizer of
Corollary~\ref{cor:uniform}.

We require the chord parameter to be \emph{interior}, $s_0\in(0,1)$,
for a substantive reason. At the boundary $s_0=1$ the interpolant
$\mathrm{interp}^\times(\gamma_{\mathrm{geo}},\gamma_{\mathrm{other}},1)$
collapses to $\gamma_{\mathrm{other}}$, so the chord inequality
$\actionA[\gamma_{\mathrm{geo}}]\le
\actionA[\mathrm{interp}^\times(\cdots,1)]$ degenerates into the
conclusion $\actionA[\gamma_{\mathrm{geo}}]\le
\actionA[\gamma_{\mathrm{other}}]$ itself and carries no deductive
content (cf.\ the vacuity of $s_0=1$ in
Remark~\ref{rem:s0-boundary}). Admitting $s_0=1$ would make the
hypothesis formally equivalent to the conclusion; restricting to
$s_0\in(0,1)$ removes this circularity while leaving the converse of
Remark~\ref{rem:pla-iff} intact. The extension to Lagrangians with
potential is necessarily weaker (Section~\ref{sec:scope}).
\end{remark}

\subsection{Closed-form minimizer and uniqueness}

\begin{corollary}[Uniform-log-velocity minimizer]\label{cor:uniform}
Given endpoints $\gamma(a) = x_a > 0$ and $\gamma(b) = x_b > 0$ on
$[a,b]$ with $a < b$, the unique minimizer of $\actionA$ among
kinetically admissible competitors
is the \emph{uniform-log-velocity path}
\[
  \gamma_*(t)
    \;=\; \exp\!\left(\log x_a + \frac{t - a}{b - a}\,(\log x_b - \log x_a)\right),
\]
whose log-velocity is the constant
$\dot\xi_* \equiv (\log x_b - \log x_a)/(b - a)$. Its kinetic action is
\[
  \actionA[\gamma_*]
    \;=\; (b - a)\left[\cosh\!\left(\tfrac{\log x_b - \log x_a}{b - a}\right) - 1\right].
\]
\end{corollary}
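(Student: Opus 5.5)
The plan is to reduce to Jensen's inequality in the log coordinate, then get uniqueness from the strict convexity of $\cosh$. First I check that $\gamma_*$ is a legitimate competitor. Its log-coordinate $\xi_*(t)=\log x_a+\frac{t-a}{b-a}(\log x_b-\log x_a)$ is affine, so $\gamma_*=e^{\xi_*}$ is smooth, strictly positive, and has the prescribed endpoints. Its log-velocity is the constant $c:=(\log x_b-\log x_a)/(b-a)$, so $\Kkin(\dot\xi_*)$ is a bounded constant. Hence $\gamma_*$ is kinetically admissible, and the stated value $\actionA[\gamma_*]=(b-a)(\cosh c-1)$ follows by integrating a constant.

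For minimality, let $\gamma$ be any kinetically admissible competitor with the same endpoints and set $\xi=\log\gamma$. Absolute continuity of $\xi$ (established after Definition~\ref{def:admissible}) gives
\[
\frac{1}{b-a}\int_a^b\dot\xi\,dt=\frac{\xi(b)-\xi(a)}{b-a}=c .
\]
Jensen's inequality for the convex function $\Kkin$ on the probability space $([a,b],dt/(b-a))$ then yields
\[
\frac{1}{b-a}\actionA[\gamma]\ge \Kkin\Bigl(\frac{1}{b-a}\int_a^b\dot\xi\,dt\Bigr)=\Kkin(c).
\]
This gives $\actionA[\gamma]\ge\actionA[\gamma_*]$. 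Jensen applies because $\dot\xi\in L^2\subset L^1$ and $\Kkin(\dot\xi)\in L^1$ by kinetic admissibility.

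An alternative route to the same inequality uses only earlier results. The pointwise tangent-line inequality $\Kkin(v)\ge\Kkin(c)+\sinh(c)(v-c)$ integrates to the bound above. The linear term vanishes because $\int_a^b(\dot\xi-c)\,dt=0$ by the shared endpoints. I would actually present the proof this way, since the slack is exactly the integrated Bregman quantity $\Delta(\dot\xi,c)$ from the proof of Theorem~\ref{thm:actionA-convex}. That makes the uniqueness step immediate.

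Uniqueness is the only step needing any care. Suppose $\actionA[\gamma]=\actionA[\gamma_*]$. Then $\int_a^b\Delta(\dot\xi(t),c)\,dt=0$ with a nonnegative integrand, so $\Delta(\dot\xi(t),c)=0$ for a.e.\ $t$. The proof of Theorem~\ref{thm:actionA-convex} showed $\Delta(v,w)>\tfrac12(v-w)^2\ge0$ for $v\neq w$, so $\dot\xi=c$ a.e. Absolute continuity and the endpoint condition then give $\xi=\xi_*$, hence $\gamma=\gamma_*$.

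One could instead argue uniqueness via Theorem~\ref{thm:local-global}. Any minimizer $\gamma$ trivially satisfies the chord condition against $\gamma_*$ for an interior $s_0$, which forces $\gamma\equiv\gamma_*$ by the strictness clause there. Either route works, and I expect no real obstacle beyond justifying the integrability needed for Jensen's inequality and the a.e.\ argument.
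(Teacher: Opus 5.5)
Your proof is correct and follows the paper's argument. The paper applies Jensen's inequality for the strictly convex $\Kkin$ on $([a,b],dt/(b-a))$, and its equality case forces $\dot\xi$ to be a.e.\ constant, hence $\gamma=\gamma_*$. Your preferred tangent-line/Bregman presentation is Jensen unpacked, and it is exactly how the paper proves the Pythagorean identity (Theorem~\ref{thm:pythagorean}) immediately afterward.
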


\begin{proof}
The path $\gamma_*$ has constant $\dot\xi_*$, hence is kinetically
admissible and has finite action. For any \emph{kinetically admissible}
$\gamma$ with these endpoints,
$\int_a^b \dot\xi\,dt = \xi(b) - \xi(a) = \log x_b - \log x_a$ is
fixed. Here $\dot\xi\in L^2([a,b])\subset L^1([a,b])$ because
$a<b<\infty$, and kinetic admissibility gives
$\Kkin(\dot\xi)\in L^1([a,b])$. Thus Jensen's inequality applies to
the probability measure $(b-a)^{-1}dt$ and the strictly convex
function $\Kkin$:
\[
  \int_a^b \Kkin(\dot\xi(t))\,dt
    \;\geq\; (b - a)\,\Kkin\!\left(\tfrac{1}{b - a}\int_a^b \dot\xi\,dt\right)
    \;=\; (b - a)\,\Kkin(\dot\xi_*),
\]
with equality iff $\dot\xi$ is a.e.\ constant, in which case
absolute continuity and the endpoint condition force
$\gamma = \gamma_*$.
\end{proof}

\subsection{Pythagorean identity and the action gap}\label{subsec:pythagorean}

Strong convexity of $\actionA$
(Theorem~\ref{thm:actionA-convex}) is reflected, at the level of the
fixed-endpoint minimizer, by an exact \emph{Bregman / Pythagorean
identity} that decomposes the action gap
$\actionA[\gamma] - \actionA[\gamma_*]$ into a non-negative
divergence integral. This refines the qualitative uniqueness of
Corollary~\ref{cor:uniform} into a quantitative
$L^2$ / Sobolev bound.

\begin{definition}[Cosh Bregman divergence]\label{def:bregman}
The \emph{cosh Bregman divergence} is the non-negative function
$D_\Kkin : \R \times \R \to \R$,
\[
  D_\Kkin(v\,\|\,w)
    \;:=\; \Kkin(v) - \Kkin(w) - \Kkin'(w)(v - w)
    \;=\; \cosh(v) - \cosh(w) - \sinh(w)(v - w).
\]
\end{definition}

\begin{proposition}[Properties of $D_\Kkin$]\label{prop:bregman}
\begin{enumerate}[label=(\roman*),noitemsep]
  \item \textbf{Integral form.}
    $D_\Kkin(v\,\|\,w) = \int_w^v (v - u)\cosh(u)\,du$.
  \item \textbf{Non-negativity.} $D_\Kkin(v\,\|\,w) \ge 0$, with
    equality iff $v = w$.
  \item \textbf{Quadratic lower bound.}
    $D_\Kkin(v\,\|\,w) \ge \tfrac{1}{2}(v - w)^2$ for all
    $v, w \in \R$.
  \item \textbf{Asymmetry near the ground state.} If $w = 0$,
    $D_\Kkin(v\,\|\,0) = \Kkin(v) = \cosh(v) - 1$, so the Bregman
    divergence \emph{at the ground velocity} reduces to the kinetic
    cost itself.
\end{enumerate}
\end{proposition}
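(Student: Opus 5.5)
All four items follow from Taylor's theorem with integral remainder applied to $\Kkin(v)=\cosh v-1$, which satisfies $\Kkin'=\sinh$ and $\Kkin''=\cosh$. Most of the work was already done inline in the equality clause of Theorem~\ref{thm:actionA-convex}, where $\Delta(v,w)$ coincides with $D_\Kkin(v\,\|\,w)$ (Remark~\ref{rem:bregman-preview}).

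For (i), I will write $\Kkin(v)-\Kkin(w)-\Kkin'(w)(v-w)=\int_w^v \Kkin''(u)(v-u)\,du$, which is the first-order Taylor formula with integral remainder at $w$. Substituting $\Kkin''=\cosh$ gives the stated form. One can also check it directly: integrating $\int_w^v (v-u)\cosh u\,du$ by parts yields $-(v-w)\sinh w+\cosh v-\cosh w$, as required.

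For (iii), I will subtract $\tfrac12(v-w)^2=\int_w^v (v-u)\,du$ from the integral form. This gives
\[
  D_\Kkin(v\,\|\,w)-\tfrac12(v-w)^2=\int_w^v (v-u)\,[\cosh u-1]\,du .
\]
The sign argument from the proof of Theorem~\ref{thm:actionA-convex} then applies. On the interval between $w$ and $v$, the factor $(v-u)$ has constant sign and $\cosh u-1\ge 0$. Whichever of $v,w$ is larger, the oriented integral is therefore $\ge 0$, and it is strictly positive when $v\ne w$.

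Item (ii) follows from (iii). The bound gives $D_\Kkin(v\,\|\,w)\ge \tfrac12(v-w)^2\ge 0$, and the right-hand side is strictly positive when $v\neq w$. Conversely, $D_\Kkin(w\,\|\,w)=0$ is immediate from the definition.

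Item (iv) is direct substitution: with $\sinh 0=0$ and $\cosh 0=1$ we get $D_\Kkin(v\,\|\,0)=\cosh v-1=\Kkin(v)$.

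The only step needing any care is the orientation bookkeeping in (iii) when $v<w$. I will handle it as in the earlier proof, by reversing the limits and noting that both the integrand and the orientation flip sign. Alternatively, I will change variables to $u=w+\theta(v-w)$, which gives
\[
  \int_w^v (v-u)g(u)\,du=(v-w)^2\int_0^1(1-\theta)\,g\bigl(w+\theta(v-w)\bigr)\,d\theta .
\]
This makes nonnegativity manifest for any $g\ge 0$ and removes the case split.
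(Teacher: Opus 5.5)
Your proposal is correct and takes essentially the same approach as the paper: Taylor's theorem with integral remainder for (i), then $\cosh u\ge 1$ inside the constant-sign integral form for (ii) and (iii), and direct evaluation for (iv). Deducing (ii) from (iii), and using the substitution $u=w+\theta(v-w)$ to avoid the case split on the ordering of $v$ and $w$, are minor tidy-ups rather than a different argument.
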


\begin{proof}
Part (i) is Taylor's theorem with integral remainder for $\Kkin$ at
$w$: $\Kkin(v) - \Kkin(w) - \Kkin'(w)(v-w) = \int_w^v (v-u)\Kkin''(u)\,du
= \int_w^v (v-u)\cosh(u)\,du$. Parts (ii) and (iii) follow from
$\cosh \ge 1 > 0$ in (i): the integrand $(v-u)\cosh(u)$ has a
constant sign on $[\min(v,w),\max(v,w)]$, and replacing $\cosh(u)$
by its lower bound $1$ gives
$\int_w^v (v-u)\,du = \tfrac12(v-w)^2$. Part (iv) is direct evaluation.
\end{proof}

\begin{theorem}[Pythagorean identity for the geodesic minimizer]\label{thm:pythagorean}
Let $\gamma$ be a kinetically admissible path on $[a,b]$ with
$\gamma(a) = x_a$ and $\gamma(b) = x_b$, and let $\gamma_*$ be the
uniform-log-velocity minimizer of Corollary~\ref{cor:uniform}, with
constant log-velocity
$\dot\xi_* = (\log x_b - \log x_a)/(b - a)$. Writing
$\xi := \log\gamma$,
\begin{equation}\label{eq:pythagorean}
  \actionA[\gamma]
    \;=\; \actionA[\gamma_*]
    \;+\; \int_a^b D_\Kkin\!\bigl(\dot\xi(t)\,\|\,\dot\xi_*\bigr)\,dt.
\end{equation}
In particular, by Proposition~\ref{prop:bregman}(iii),
\begin{equation}\label{eq:L2-gap}
  \actionA[\gamma] - \actionA[\gamma_*]
    \;\ge\; \tfrac{1}{2}\int_a^b \bigl(\dot\xi(t) - \dot\xi_*\bigr)^2\,dt
    \;=\; \tfrac{1}{2}\,\|\dot\xi - \dot\xi_*\|_{L^2([a,b])}^2,
\end{equation}
with equality iff $\gamma \equiv \gamma_*$ on $[a,b]$.
\end{theorem}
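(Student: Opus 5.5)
The identity \eqref{eq:pythagorean} will follow by integrating the definition of $D_\Kkin$ pointwise and using the endpoint constraint to kill the linear term. For a.e.\ $t$, Definition~\ref{def:bregman} with $v=\dot\xi(t)$ and $w=\dot\xi_*$ gives
\[
  \Kkin(\dot\xi(t)) \;=\; \Kkin(\dot\xi_*) + \sinh(\dot\xi_*)\bigl(\dot\xi(t)-\dot\xi_*\bigr) + D_\Kkin\bigl(\dot\xi(t)\,\|\,\dot\xi_*\bigr).
\]
Before integrating, I check integrability of each term. The left side is in $L^1$ by kinetic admissibility. The constant $\Kkin(\dot\xi_*)$ is trivially integrable on $[a,b]$. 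The linear term is integrable because $\dot\xi\in L^2\subset L^1$ on the bounded interval, as already noted in the proof of Corollary~\ref{cor:uniform}. Hence $D_\Kkin(\dot\xi\|\dot\xi_*)$, being a linear combination of these terms, is in $L^1$; it is also non-negative. This guarantees that no $\infty-\infty$ issue arises when integrating.

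Integrating over $[a,b]$, the first term yields $(b-a)\Kkin(\dot\xi_*)=\actionA[\gamma_*]$ by Corollary~\ref{cor:uniform}. The linear term vanishes: absolute continuity gives $\int_a^b\dot\xi\,dt=\xi(b)-\xi(a)=\log x_b-\log x_a=(b-a)\dot\xi_*$, so $\int_a^b(\dot\xi-\dot\xi_*)\,dt=0$. This gives \eqref{eq:pythagorean}. The only point requiring care is precisely this fixed-endpoint cancellation. It works because $\dot\xi_*$ is constant, so the multiplier $\sinh(\dot\xi_*)$ comes out of the integral. This is where the minimizer's special structure is used.

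For \eqref{eq:L2-gap}, I apply Proposition~\ref{prop:bregman}(iii) pointwise and integrate; the right side is finite since $\dot\xi\in L^2$. For the equality case, the ``if'' direction is trivial. Conversely, equality forces $\int_a^b[D_\Kkin(\dot\xi\|\dot\xi_*)-\tfrac12(\dot\xi-\dot\xi_*)^2]\,dt=0$ with a non-negative integrand, so the integrand vanishes a.e. By the strict bound established in the proof of Theorem~\ref{thm:actionA-convex}, namely $\Delta(v,w)>\tfrac12(v-w)^2$ for $v\ne w$, we get $\dot\xi=\dot\xi_*$ a.e. Absolute continuity together with $\xi(a)=\log x_a$ then gives $\xi=\log\gamma_*$, that is, $\gamma\equiv\gamma_*$.
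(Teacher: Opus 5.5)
Your proof is correct and follows the paper's argument essentially line by line: the pointwise Bregman expansion around the constant $\dot\xi_*$, the $L^1$ bookkeeping for each term, and the cancellation of the linear term via the shared endpoints. Your equality-case argument is slightly more careful than the paper's, since you explicitly invoke the strict bound $\Delta(v,w)>\tfrac12(v-w)^2$ for $v\ne w$ from the proof of Theorem~\ref{thm:actionA-convex}, whereas Proposition~\ref{prop:bregman}(iii) as stated gives only the non-strict inequality.
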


\begin{proof}
Define $\xi_*(t) := \log x_a + \dot\xi_*\,(t - a)$, so $\xi_*$ is
affine with constant derivative $\dot\xi_*$. By
Definition~\ref{def:bregman},
\begin{equation}\label{eq:pyth-pointwise}
  \Kkin(\dot\xi(t))
    \;=\; \Kkin(\dot\xi_*) + \Kkin'(\dot\xi_*)\bigl(\dot\xi(t) - \dot\xi_*\bigr)
       \;+\; D_\Kkin\!\bigl(\dot\xi(t)\,\|\,\dot\xi_*\bigr).
\end{equation}
Before integrating, note that each of the three terms lies in
$L^1([a,b])$: the constant $\Kkin(\dot\xi_*)$ trivially; the linear
term because $\dot\xi-\dot\xi_*\in L^2([a,b])\subset L^1([a,b])$ and
$\Kkin'(\dot\xi_*)=\sinh(\dot\xi_*)$ is a finite constant; and the
Bregman term because, by \eqref{eq:pyth-pointwise}, it equals the
$L^1$ function $\Kkin(\dot\xi)$ (integrable by kinetic admissibility
of $\gamma$, Definition~\ref{def:kin-adm}) minus the two $L^1$ terms
just named. In particular $D_\Kkin(\dot\xi\,\|\,\dot\xi_*)\in
L^1([a,b])$, so the identity \eqref{eq:pythagorean} is an
equality between finite quantities.
The first term integrates to $(b-a)\Kkin(\dot\xi_*) = \actionA[\gamma_*]$
(Corollary~\ref{cor:uniform}). The second term integrates to
\(
  \Kkin'(\dot\xi_*)\int_a^b (\dot\xi - \dot\xi_*)\,dt
  = \Kkin'(\dot\xi_*)\bigl[(\xi(b)-\xi(a)) - (\xi_*(b)-\xi_*(a))\bigr]
  = 0,
\)
because $\gamma$ and $\gamma_*$ share endpoints, so
$\int_a^b\dot\xi\,dt = \log x_b - \log x_a = \int_a^b\dot\xi_*\,dt$
and the prefactor $\Kkin'(\dot\xi_*) = \sinh(\dot\xi_*)$ is a finite
constant. The third term integrates to the right-hand side of
\eqref{eq:pythagorean}. The bound \eqref{eq:L2-gap} is
Proposition~\ref{prop:bregman}(iii) integrated, with equality iff
$\dot\xi = \dot\xi_*$ a.e., equivalently $\gamma = \gamma_*$ by
absolute continuity and the endpoint condition.
\end{proof}

\begin{remark}[Pythagorean reading]\label{rem:pythagorean-reading}
Equation~\eqref{eq:pythagorean} is the Pythagorean theorem of
Bregman geometry in 1D: the geodesic $\gamma_*$ is the
\emph{Bregman projection} of any kinetically admissible competitor
$\gamma$ onto the constant-velocity submanifold subject to the
endpoint constraint, and the action gap is the Bregman ``squared
distance'' to that projection. In this sense
Corollary~\ref{cor:uniform} is not only an existence/uniqueness
statement but a projection identity, and
Theorem~\ref{thm:pla} can be read as a sub-gradient
characterization of the projection.
\end{remark}

\begin{corollary}[Sobolev / Friedrichs--Poincar\'e gap]\label{cor:sobolev-gap}
With the notation of Theorem~\ref{thm:pythagorean}, set
$\eta := \xi - \xi_* \in H^1_0([a,b])$ (so $\eta(a) = \eta(b) = 0$).
Then
\begin{equation}\label{eq:sobolev-gap}
  \actionA[\gamma] - \actionA[\gamma_*]
    \;\ge\; \tfrac{1}{2}\int_a^b \dot\eta(t)^2\,dt
    \;\ge\; \frac{\pi^2}{2(b-a)^2}\int_a^b \eta(t)^2\,dt
    \;=\; \frac{\pi^2}{2(b-a)^2}\,\bigl\|\log(\gamma/\gamma_*)\bigr\|_{L^2([a,b])}^2.
\end{equation}
The constant $\pi^2/(b-a)^2$ is the optimal Friedrichs constant on
$H^1_0([a,b])$, attained by $\eta(t) = \sin(\pi(t-a)/(b-a))$.
\end{corollary}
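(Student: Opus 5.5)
The plan is to chain three facts: the quadratic gap bound \eqref{eq:L2-gap} of Theorem~\ref{thm:pythagorean}, the one-dimensional Friedrichs inequality on $H^1_0([a,b])$, and the identity $\eta = \log(\gamma/\gamma_*)$.

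\textbf{Step 1: $\eta\in H^1_0([a,b])$.} Since $\gamma$ is admissible, $\xi=\log\gamma$ is absolutely continuous with $\dot\xi\in L^2$ (Remark~\ref{rem:auto-positive}). The path $\xi_*$ is affine. Hence $\eta=\xi-\xi_*$ is absolutely continuous with $\dot\eta=\dot\xi-\dot\xi_*\in L^2$. The shared endpoints give $\eta(a)=\eta(b)=0$.

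\textbf{Step 2: first inequality.} Because $\dot\eta=\dot\xi-\dot\xi_*$ a.e., the first inequality in \eqref{eq:sobolev-gap} is exactly \eqref{eq:L2-gap} rewritten. The final equality is immediate from $\eta=\log\gamma-\log\gamma_*=\log(\gamma/\gamma_*)$.

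\textbf{Step 3: Friedrichs inequality.} The remaining and only substantive step is
\[
\int_a^b\eta^2\,dt\le\frac{(b-a)^2}{\pi^2}\int_a^b\dot\eta^2\,dt
\qquad\text{for }\eta\in H^1_0([a,b]),
\]
together with the sharpness of the constant.

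\begin{itemize}
\item \emph{Normalization.} I would rescale $t=a+(b-a)\theta/\pi$ to reduce to the interval $[0,\pi]$ with constant $1$. This is Wirtinger's inequality, which the paper cites from \cite{hardy_littlewood_polya1952}.
\item \emph{Self-contained proof.} Extend $\eta$ oddly to $[-\pi,\pi]$, obtaining a $2\pi$-periodic $H^1$ function. Expand it in the sine series $\eta=\sum_{k\ge1}c_k\sin(k\theta)$; the odd extension guarantees no constant mode. Parseval then gives $\|\dot\eta\|^2=\sum k^2|c_k|^2\cdot\frac{\pi}{2}\ge\sum|c_k|^2\cdot\frac{\pi}{2}=\|\eta\|^2$.
\item \emph{Equality case.} Equality holds iff only $c_1\ne0$, i.e.\ $\eta$ is a multiple of $\sin\theta$. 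Undoing the rescaling gives $\sin(\pi(t-a)/(b-a))$, which establishes optimality of $\pi^2/(b-a)^2$.
\end{itemize}

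\textbf{Main obstacle.} The only technical care needed is justifying the termwise differentiation of the sine series for a merely $H^1_0$ function. I would handle this by computing the Fourier coefficients of $\dot\eta$ via integration by parts, which is valid for absolutely continuous $\eta$ with vanishing endpoint values. This shows $\dot\eta$ has cosine coefficients $kc_k$, and Parseval for $\dot\eta\in L^2$ then suffices.

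An alternative that avoids Fourier series is the factorization trick. Write $\eta=\sin(\theta)\,\psi$ on $(0,\pi)$ and expand $\dot\eta^2-\eta^2=\sin^2\theta\,\dot\psi^2+\frac{d}{d\theta}(\sin\theta\cos\theta\,\psi^2)$. The boundary term vanishes since $\eta(0)=\eta(\pi)=0$, though controlling it near the endpoints needs a limiting argument. Since the statement only asks for the bound and the value of the optimal constant, quoting the classical result is acceptable; I would include the Fourier argument for completeness.
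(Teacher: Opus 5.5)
Your proposal is correct and follows the paper's proof: the first inequality is \eqref{eq:L2-gap} rewritten via $\dot\eta=\dot\xi-\dot\xi_*$, and the second is the one-dimensional Friedrichs--Wirtinger inequality on $H^1_0([a,b])$. The only difference is that the paper simply cites Wirtinger's inequality and its optimal constant, whereas your odd-extension sine-series argument (cosine coefficients $k c_k$ of $\dot\eta$ obtained by integration by parts) gives a correct self-contained proof that also covers the equality case.
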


\begin{proof}
The first inequality is \eqref{eq:L2-gap}, written in $\eta$-coordinates:
$\dot\xi - \dot\xi_* = \dot\eta$. The second inequality is the
Friedrichs (one-dimensional Wirtinger) inequality on $H^1_0([a,b])$,
$\int \dot\eta^2 \ge \frac{\pi^2}{(b-a)^2}\int \eta^2$.
\end{proof}

\subsection{Joint convexity of the minimum-action profile}\label{subsec:perspective}

\begin{proposition}[Perspective convexity of $\actionA_*$]\label{prop:perspective}
Define the \emph{minimum-action profile}
\[
  \actionA_*(T,\Delta) \;:=\; T\bigl[\cosh(\Delta/T) - 1\bigr],
  \qquad T > 0,\;\Delta \in \R.
\]
By Corollary~\ref{cor:uniform},
$\actionA_*(b-a,\,\log x_b - \log x_a) = \actionA[\gamma_*]$ on
$[a,b]$. Then $\actionA_*$ is:
\begin{enumerate}[label=(\roman*),noitemsep]
  \item the perspective transform of $u \mapsto \cosh(u) - 1$,
    hence \emph{jointly convex} on $\Rplus \times \R$;
  \item strictly convex in $\Delta$ for each fixed $T > 0$;
  \item positively homogeneous of degree $1$:
    $\actionA_*(\lambda T,\lambda\Delta) = \lambda\,\actionA_*(T,\Delta)$
    for every $\lambda > 0$;
  \item monotone non-increasing in $T$ for each fixed $\Delta$, with
    $\partial_T \actionA_* \le 0$ and $\actionA_*(T,\Delta) \to 0$
    as $T \to \infty$.
\end{enumerate}
\end{proposition}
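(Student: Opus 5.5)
The plan is to treat the four items separately. Each reduces to an elementary fact about $f(u):=\cosh u-1$ together with the perspective identity $\actionA_*(T,\Delta)=T f(\Delta/T)$. The identification $\actionA_*(b-a,\log x_b-\log x_a)=\actionA[\gamma_*]$ is immediate from the closed-form action in Corollary~\ref{cor:uniform}, so it needs no separate argument.

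For (i), I will prove joint convexity directly rather than cite the general perspective theorem. Take $(T_1,\Delta_1),(T_2,\Delta_2)\in\Rplus\times\R$ and $s\in[0,1]$, and set $T_s=(1-s)T_1+sT_2>0$ and $\Delta_s=(1-s)\Delta_1+s\Delta_2$. The weights $\lambda_1=(1-s)T_1/T_s$ and $\lambda_2=sT_2/T_s$ are nonnegative and sum to $1$, and
\[
  \frac{\Delta_s}{T_s}=\lambda_1\frac{\Delta_1}{T_1}+\lambda_2\frac{\Delta_2}{T_2}.
\]
Convexity of $f$ (Proposition~\ref{prop:Kkin}(iii)) then gives $f(\Delta_s/T_s)\le \lambda_1 f(\Delta_1/T_1)+\lambda_2 f(\Delta_2/T_2)$. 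Multiplying by $T_s$ yields
\[
  \actionA_*(T_s,\Delta_s)\le (1-s)\actionA_*(T_1,\Delta_1)+s\,\actionA_*(T_2,\Delta_2).
\]
This is the only step with any content, and the care needed is just the weight bookkeeping and the positivity $T_s>0$.

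For (ii), fix $T>0$. The second derivative is $\partial_\Delta^2\actionA_*=T^{-1}\cosh(\Delta/T)\ge T^{-1}>0$, which gives strict convexity in $\Delta$.

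For (iii), substitute directly: $\lambda T f(\lambda\Delta/(\lambda T))=\lambda T f(\Delta/T)$.

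For (iv), put $u=\Delta/T$ and differentiate:
\[
  \partial_T\actionA_*=f(u)-u f'(u)=\cosh u-1-u\sinh u .
\]
Call this $g(u)$. Then $g(0)=0$, $g$ is even, and $g'(u)=-u\cosh u\le 0$ for $u\ge0$, so $g\le 0$ everywhere. Equivalently, $-g(u)=D_\Kkin(0\|u)\ge0$ by Proposition~\ref{prop:bregman}(ii). For the limit, Proposition~\ref{prop:Kkin}(iv) gives $f(u)\le u^2/2+u^4/(24(1-u^2))$ for $|u|<1$. Hence for $T$ large,
\[
  T f(\Delta/T)\le \frac{\Delta^2}{2T}+O(T^{-3})\to 0 .
\]
An alternative is simply $T f(\Delta/T)\sim \Delta^2/(2T)$.
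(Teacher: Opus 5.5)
Your proof is correct. Items (ii)--(iv) follow the paper's argument:
\begin{itemize}
\item The paper also computes $\partial_\Delta^2\actionA_*=T^{-1}\cosh(\Delta/T)$ and checks homogeneity by substitution.
\item It proves $\partial_T\actionA_*=f(u)-uf'(u)\le 0$ from the supporting-line inequality $f(0)\ge f(u)-uf'(u)$. This is exactly your $D_\Kkin(0\,\|\,u)\ge 0$, and your monotonicity argument via $g'(u)=-u\cosh u$ and evenness is an equivalent rephrasing.
\item For the limit, your use of the explicit quartic bound of Proposition~\ref{prop:Kkin}(iv), together with $f\ge 0$, is a slightly more quantitative form of the paper's Taylor remark.
\end{itemize}

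The genuine difference is in (i). The paper computes all second partials and observes that the Hessian factors as the rank-one matrix $\frac{\cosh u}{T}\,(u,-1)^{\top}(u,-1)$, hence is positive semidefinite. You instead give the textbook perspective argument: you write $\Delta_s/T_s$ as a convex combination of $\Delta_1/T_1$ and $\Delta_2/T_2$ with weights $(1-s)T_1/T_s$ and $sT_2/T_s$, and apply Jensen to $f$.

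Your route is more elementary and more general. It uses only convexity of $f$ and no differentiability, so it proves the ``perspective transform, hence jointly convex'' clause in exactly the generality in which it is stated.

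The paper's Hessian route buys explicit structure: at $(T,\Delta)$ the unique null direction is the radial vector $(T,\Delta)$ itself. The paper reuses this for the equality clause of Corollary~\ref{cor:concat}. Your approach recovers that clause at least as cheaply. Write $(\Delta_1+\Delta_2)/(T_1+T_2)$ as the convex combination of $\Delta_1/T_1$ and $\Delta_2/T_2$ with positive weights $T_i/(T_1+T_2)$. Strict convexity of $f$ then gives subadditivity, with equality iff $\Delta_1/T_1=\Delta_2/T_2$, and no discussion of transverse Hessian directions is needed.
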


\begin{proof}
Let $f(u) := \cosh(u) - 1$ and write $u := \Delta/T$, so
$\actionA_*(T,\Delta) = T f(u)$. From $\partial_T u = -u/T$ and
$\partial_\Delta u = 1/T$ one computes the partial derivatives
\[
  \partial_T \actionA_* = f(u) - u f'(u),\qquad
  \partial_\Delta \actionA_* = f'(u),
\]
\[
  \partial_T^2 \actionA_* = \frac{u^2 f''(u)}{T},\qquad
  \partial_\Delta^2 \actionA_* = \frac{f''(u)}{T},\qquad
  \partial_T \partial_\Delta \actionA_* = -\frac{u f''(u)}{T}.
\]
With ordering $(T,\Delta)$, the Hessian factors as a rank-one outer product:
\[
  \nabla^2 \actionA_*(T,\Delta)
    \;=\; \frac{f''(u)}{T}\begin{pmatrix} u \\ -1 \end{pmatrix}
        \begin{pmatrix} u & -1 \end{pmatrix}
    \;=\; \frac{\cosh(\Delta/T)}{T}\begin{pmatrix} \Delta/T \\ -1 \end{pmatrix}
        \begin{pmatrix} \Delta/T & -1 \end{pmatrix}.
\]
Since $\cosh > 0$ and $T > 0$, the Hessian is positive semidefinite,
giving joint convexity (i). For fixed $T$,
$\partial_\Delta^2 \actionA_* = T^{-1}\cosh(\Delta/T) > 0$, giving
strict convexity in $\Delta$, hence (ii). Positive homogeneity
(iii) is immediate from
$\lambda T\,f(\lambda\Delta/(\lambda T)) = \lambda T f(\Delta/T)$.

For (iv), use the supporting-hyperplane inequality for the convex
function $f$ at the point $u$, evaluated at $0$:
\[
  f(0) \;\ge\; f(u) + f'(u)(0 - u) \;=\; f(u) - u f'(u),
\]
i.e.\ $f(u) - u f'(u) \le f(0) = 0$, with strict inequality when
$u \ne 0$ by strict convexity (the supporting line at $u$ touches
$f$ only at $u$). Hence
$\partial_T \actionA_* = f(\Delta/T) - (\Delta/T)f'(\Delta/T) \le 0$,
with equality iff $\Delta/T = 0$, i.e.\ iff $\Delta = 0$. The
asymptotic $\actionA_*(T,\Delta) = \Delta^2/(2T) + O(\Delta^4/T^3)
\to 0$ as $T \to \infty$ follows from the Taylor expansion
$\cosh(u) - 1 = u^2/2 + O(u^4)$ at $u = \Delta/T \to 0$.
\end{proof}

\begin{corollary}[Geodesic concatenation: subadditivity in time]\label{cor:concat}
For $T_1, T_2 > 0$ and $\Delta_1, \Delta_2 \in \R$,
\begin{equation}\label{eq:concat}
  \actionA_*(T_1 + T_2,\,\Delta_1 + \Delta_2)
    \;\le\; \actionA_*(T_1,\Delta_1) + \actionA_*(T_2,\Delta_2),
\end{equation}
with equality iff $\Delta_1/T_1 = \Delta_2/T_2$. Equivalently, the
unique global minimizer between fixed positive endpoints on
$[a,c]$ is the single uniform-log-velocity geodesic;
splitting the interval at any intermediate $b\in(a,c)$ and
relaxing the value at $b$ never beats the global geodesic, and ties
it only when the two halves continue at the same log-velocity.
\end{corollary}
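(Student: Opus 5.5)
The inequality \eqref{eq:concat} follows from joint convexity and positive $1$-homogeneity of $\actionA_*$ (Proposition~\ref{prop:perspective}(i),(iii)). Set $T := T_1+T_2$ and $\lambda := T_1/T \in (0,1)$. Write
\[
(T_1+T_2,\Delta_1+\Delta_2) = \lambda\,(T_1/\lambda,\Delta_1/\lambda) + (1-\lambda)\,(T_2/(1-\lambda),\Delta_2/(1-\lambda)).
\]
Apply joint convexity, then pull the scalars out by homogeneity; this yields $\actionA_*(T_1,\Delta_1)+\actionA_*(T_2,\Delta_2)$ on the right. Equivalently, and more transparently, I would write $u_i := \Delta_i/T_i$ and $f(u):=\cosh u-1$. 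The claim then becomes
\[
T f\!\Bigl(\tfrac{T_1}{T}u_1+\tfrac{T_2}{T}u_2\Bigr) \le T_1 f(u_1) + T_2 f(u_2),
\]
which is exactly the two-point Jensen inequality for $f$ with weights $T_1/T$ and $T_2/T$. I would present this direct route as the main line, since it makes the equality case immediate.

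For the equality case, strict convexity of $f$ (Proposition~\ref{prop:Kkin}(iii)) together with strictly positive weights gives equality iff $u_1=u_2$, i.e.\ $\Delta_1/T_1=\Delta_2/T_2$. This is the only point needing care: joint convexity of the perspective alone is not strict, because the Hessian in Proposition~\ref{prop:perspective} is rank one and degenerate along rays. That is why I would route through the one-variable Jensen form rather than through (i).

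For the path-level reformulation, fix $a<b<c$ and positive endpoints $x_a,x_c$. Take any intermediate value $y>0$ and put $\Delta_1=\log y-\log x_a$, $\Delta_2=\log x_c-\log y$. Concatenating the two uniform-log-velocity geodesics of Corollary~\ref{cor:uniform} on $[a,b]$ and $[b,c]$ gives a kinetically admissible path from $x_a$ to $x_c$. Its log is piecewise affine, hence absolutely continuous with bounded derivative. Its action is $\actionA_*(b-a,\Delta_1)+\actionA_*(c-b,\Delta_2)$, and by \eqref{eq:concat} this is at least $\actionA_*(c-a,\log x_c-\log x_a)$, the action of the single geodesic. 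Equality holds iff the two log-velocities agree, i.e.\ iff $y=\gamma_*(b)$ and the concatenation is $\gamma_*$. Uniqueness of the global minimizer on $[a,c]$ is already Corollary~\ref{cor:uniform}, so the reformulation is just this consistency check.
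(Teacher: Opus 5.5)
Your proof is correct, but it takes a different route from the paper's.

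The paper applies midpoint convexity at the points $2X_1=(2T_1,2\Delta_1)$ and $2X_2=(2T_2,2\Delta_2)$ and absorbs the factor $2$ by $1$-homogeneity. This is the generic fact that a convex, positively $1$-homogeneous function is subadditive. These two points usually have different time coordinates, so the equality clause then needs the rank-one Hessian analysis of Proposition~\ref{prop:perspective}. The only null direction is radial, so $\actionA_*$ is strictly convex along every segment not contained in a ray through the origin, and equality forces $X_1,X_2$ to be positively proportional.

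You instead take weights $\lambda=T_1/(T_1+T_2)$. This places both rescaled points on the same vertical line $T=T_1+T_2$, so joint convexity collapses to convexity in $\Delta$ at fixed $T$. That is the two-point Jensen inequality for $\cosh-1$, and the equality case $u_1=u_2$ follows directly from strict convexity of $\cosh-1$, with no Hessian computation. Your worry that joint convexity of the perspective is not strict is exactly the issue the paper resolves with its transverse-strictness argument; your choice of weights simply sidesteps it.

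What each buys: your version is shorter and more elementary, and it makes the equality clause immediate. The paper's version is more structural: it applies verbatim to any perspective function, and it explains geometrically why equality means ``same ray'', i.e.\ the same log-velocity.

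Your explicit path-level check spells out the ``equivalently'' reformulation, which the paper leaves implicit. You show that the concatenation of the two sub-geodesics is kinetically admissible and ties $\gamma_*$ exactly when $y=\gamma_*(b)$, with uniqueness deferred to Corollary~\ref{cor:uniform}.
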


\begin{proof}
By Proposition~\ref{prop:perspective}(i),(iii),
$\actionA_*$ is convex and positively homogeneous of degree $1$ on
the open half-plane $\Rplus \times \R$. A convex positively
$1$-homogeneous function is subadditive: applying convexity at
$\tfrac12((2T_1, 2\Delta_1) + (2T_2, 2\Delta_2))$ and using
$1$-homogeneity to absorb the factor $2$ yields
\eqref{eq:concat}.

For the equality clause, write $X_i=(T_i,\Delta_i)\in\Rplus\times\R$.
Equality in \eqref{eq:concat} is exactly equality in the midpoint
convexity step
$\actionA_*\!\bigl(\tfrac12(2X_1)+\tfrac12(2X_2)\bigr)
=\tfrac12\actionA_*(2X_1)+\tfrac12\actionA_*(2X_2)$, which holds iff
$\actionA_*$ is affine on the segment joining $2X_1$ and $2X_2$. By
Proposition~\ref{prop:perspective}, the Hessian
$\nabla^2\actionA_*(T,\Delta)=\tfrac{\cosh(\Delta/T)}{T}
(\Delta/T,-1)^{\!\top}(\Delta/T,-1)$ is rank one and positive
semidefinite, with its single null direction the \emph{radial} one
$(T,\Delta)$ (along which $1$-homogeneity makes $\actionA_*$ linear);
in every transverse direction it is strictly positive. Hence
$\actionA_*$ is strictly convex along any segment that does not lie
on a ray through the origin, and the midpoint identity forces
$2X_1$ and $2X_2$ -- equivalently $X_1$ and $X_2$ -- to be
positively proportional, i.e.\ $\Delta_1/T_1 = \Delta_2/T_2$.
Conversely, on a common ray $1$-homogeneity gives equality. (This
sharpening also uses the strict convexity in $\Delta$ of
Proposition~\ref{prop:perspective}(ii), which guarantees the
transverse Hessian does not degenerate.)
\end{proof}

\begin{remark}[Causal / adiabatic asymptotics]\label{rem:asymptotics}
Two limits of $\actionA_*$ are physically significant.
\begin{itemize}[noitemsep,topsep=2pt]
  \item \textbf{Short-time / ultra-relativistic.} As $T\to 0^+$
    with $\Delta\ne 0$ fixed, $\actionA_*(T,\Delta) \sim
    \tfrac12 e^{|\Delta|/T}\,T \to \infty$ exponentially. The cosh
    cost imposes a natural \emph{exponential barrier in
    log-displacement}: a log-displacement $\Delta$ over time $T$ is
    exponentially expensive when $|\Delta| \gg T$.
  \item \textbf{Long-time / adiabatic.} As $T\to\infty$ with
    $\Delta$ fixed, $\actionA_*(T,\Delta) = \tfrac{\Delta^2}{2T}
      + O(\Delta^4/T^3) \to 0$. The minimum-action profile is
    asymptotically Newtonian on the minimizer alone, even though
    no individual integrand has been linearized.
\end{itemize}
\end{remark}

\subsection{Structural reformulation: dually flat / Hessian geometry}\label{subsec:duallyflat}

\begin{remark}[Dually flat / Hessian-manifold reading]\label{rem:dually-flat}
The convexity results admit a clean reading in the language of
Hessian / dually flat manifolds (Amari--Nagaoka~\cite{amari_nagaoka2000},
Shima~\cite{shima2007}), provided
one is careful about \emph{which} Hessian structure carries the
geometric interpolation. The relevant structure lives in the
\emph{additive} coordinate $\xi=\log x$: the log-cost
$\Jlog(\xi)=\cosh\xi-1$ is a strictly convex Hessian potential on
$\R$, and the pair $(\R,\Jlog)$ is a one-dimensional dually flat
manifold with metric
\[
  g_{\Jlog}(\xi)=\Jlog''(\xi)\,d\xi^2=\cosh\xi\,d\xi^2.
\]
In this structure:
\begin{itemize}[noitemsep,topsep=2pt]
  \item geometric interpolation $\mathrm{interp}^\times$, being
    affine in $\xi$, is precisely the $e$-affine (log-affine)
    geodesic;
  \item the cosh Bregman divergence
    (Definition~\ref{def:bregman}) is the Bregman divergence
    \emph{generated by $\Jlog$}, entering the Pythagorean identity
    (Theorem~\ref{thm:pythagorean}) evaluated on the
    log-\emph{velocity} $\dot\xi$;
  \item the kinetic action $\actionA$ is the corresponding
    Bregman / cosh-energy of paths;
  \item Theorems~\ref{thm:actionA-convex},
    \ref{thm:pythagorean} and Corollary~\ref{cor:uniform} are the
    1D instances of the standard ``geodesic convexity +
    projection theorem'' for Bregman energies on dually flat
    manifolds.
\end{itemize}

\noindent\emph{This is not the Hessian metric of $\Jcost$ in the
coordinate $x$.} The pair $(\Rplus,\Jcost)$ is \emph{also} a Hessian
manifold, with affine coordinate $x$ and metric
$g_J(x)=\Jcost''(x)\,dx^2=x^{-3}\,dx^2$; its primal-affine geodesics
are parameterized linearly in $x$ (arithmetic interpolation
$\mathrm{interp}^+$), and its Levi--Civita geodesics are the
power-law family $\gamma(t)=(at+b)^{-2}$ of
Appendix~\ref{app:hessian} (Theorem~\ref{thm:geodesic-family},
Remark~\ref{rem:A-vs-E}). On the $1$-manifold $\Rplus$ these are not
distinct \emph{curves} -- a fixed-endpoint geodesic of any connection
traces the same arc of $\Rplus$ -- but distinct \emph{parameterizations}
of that common arc, each connection fixing its own affine time-law.
Because $\xi=\log x$ is a \emph{nonlinear}
reparametrization, it does not transport the affine/geodesic
structure of $x$: indeed $g_J$ pulls back to $e^{-\xi}\,d\xi^2$,
which differs from $g_{\Jlog}=\cosh\xi\,d\xi^2$. The two Hessian
structures share the carrier but use different potentials, metrics,
and connections, and only $(\R,\Jlog)$ carries the
geometric-interpolation convexity of $\actionA$. The Otto-type
geometry of $\Ehess$ on $(\Rplus,g_J)$ is left to future work.
\end{remark}

\section{Euler--Lagrange Equations and the Geodesic Picture}\label{sec:el}

Three natural Euler--Lagrange equations appear in the
cost-functional setting: one for $\actionJ$, one for $\actionA$, and
one for the Hessian path-energy $\Ehess$. All three are satisfied at the
ground state $\gamma \equiv 1$; away from it they differ in content.

\begin{definition}[Critical points used in this section]\label{def:critical}
Let $a<b$.
\begin{enumerate}[label=(\roman*),noitemsep]
  \item An admissible path $\gamma$ is a \emph{static critical point}
    of $\actionJ$ if, for every $\eta\in C^1([a,b])$ with
    $\eta(a)=\eta(b)=0$, the admissibility condition
    $\gamma+\varepsilon\eta>0$ holds for all sufficiently small
    $|\varepsilon|$, and
    \[
      \left.\frac{d}{d\varepsilon}\right|_{\varepsilon=0}
      \int_a^b \Jcost(\gamma(t)+\varepsilon\eta(t))\,dt=0.
    \]
  \item A $C^2$ log-path $\xi$ is a \emph{kinetic critical point}
    of $\actionA$ if, for every $\eta\in C^1([a,b])$ with
    $\eta(a)=\eta(b)=0$,
    \[
      \left.\frac{d}{d\varepsilon}\right|_{\varepsilon=0}
      \int_a^b \Kkin(\dot\xi(t)+\varepsilon\dot\eta(t))\,dt=0.
    \]
    For an admissible path $\gamma$ with $\xi=\log\gamma$, this
    is the critical-point notion used for $\actionA[\gamma]$.
\end{enumerate}
\end{definition}

\begin{remark}[Cost-rate Euler--Lagrange equation for $\actionJ$]\label{rem:cost-rate-EL}
Because the integrand of $\actionJ$ depends only on $\gamma$ and not
on $\dot\gamma$, the variational equation for $\actionJ$ is purely
algebraic. For admissible $\gamma$ and $\eta\in C^1([a,b])$ with
$\eta(a)=\eta(b)=0$, differentiating under the integral (justified
because $\gamma$ is continuous and strictly positive on the compact
$[a,b]$, so $\gamma+\varepsilon\eta$ stays in a compact subinterval
of $(0,\infty)$ where $\Jcost'$ is bounded, for $|\varepsilon|$ small)
gives the first variation
\[
  \delta\actionJ[\gamma](\eta)
    \;=\; \int_a^b \Jcost'(\gamma(t))\,\eta(t)\,dt.
\]
The du Bois-Reymond lemma forces $\Jcost'(\gamma(t)) = 0$ a.e.; with
$\Jcost'\circ\gamma$ continuous on $[a,b]$, this upgrades to equality
for every $t$. Since
$\Jcost'(x) = \tfrac12(1 - x^{-2})$ vanishes on $(0,\infty)$ only at
$x=1$, the unique admissible static critical point of $\actionJ$ (in
the sense of Definition~\ref{def:critical}) is the ground state
$\gamma\equiv 1$. We retain this fact only for the ground-state
coexistence observation of Remark~\ref{rem:ground-state-coexistence}
below; no theorem downstream of this remark uses the cost-rate
Euler--Lagrange equation.
\end{remark}

\subsection{The kinetic EL for \texorpdfstring{$\actionA$}{A}}

The integrand of $\actionA$ is $\Kkin(\dot\xi)$ -- a function of
$\dot\xi$ only, independent of $\xi$. The Euler--Lagrange equation is
therefore that $\partial_{\dot\xi}\Kkin$ is conserved in time:
\[
  \frac{d}{dt}\,\sinh(\dot\xi(t)) \;=\; 0
    \quad\Longleftrightarrow\quad
  \cosh(\dot\xi(t))\,\ddot\xi(t) \;=\; 0
    \quad\Longleftrightarrow\quad
  \ddot\xi(t) \;=\; 0,
\]
the last equivalence because $\cosh > 0$.

\begin{theorem}[Kinetic EL selects uniform log-velocity]\label{thm:kinetic-EL}
Let $\gamma$ be an admissible path on $[a,b]$ with
$\xi := \log\gamma \in C^2$. Then $\gamma$ is automatically kinetically
admissible (Definition~\ref{def:kin-adm}), and $\gamma$ is a kinetic
critical point of $\actionA$ in the sense of
Definition~\ref{def:critical} iff $\ddot\xi \equiv 0$, i.e., $\xi$ is
an affine function of $t$ and
\(
  \gamma(t) = x_a\,e^{\dot\xi_0(t - a)}
\)
for some $x_a > 0$ and $\dot\xi_0 \in \R$. With fixed endpoints
$\gamma(a) = x_a$, $\gamma(b) = x_b$, the unique critical point is the
uniform-log-velocity path of Corollary~\ref{cor:uniform}, and it is
simultaneously the global minimizer.
\end{theorem}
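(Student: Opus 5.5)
The argument has four steps: kinetic admissibility from continuity of $\dot\xi$, the first variation computed by differentiating under the integral, du Bois-Reymond to get the Euler--Lagrange equation, and then the endpoints together with Corollary~\ref{cor:uniform}.

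\emph{Kinetic admissibility.} Since $\xi\in C^2([a,b])$, the derivative $\dot\xi$ is continuous on the compact interval $[a,b]$. It is therefore bounded, say $|\dot\xi|\le M$. Then $0\le\Kkin(\dot\xi)\le\cosh M-1$, which is integrable on $[a,b]$, so $\gamma$ is kinetically admissible in the sense of Definition~\ref{def:kin-adm}.

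\emph{First variation.} Fix $\eta\in C^1([a,b])$ with $\eta(a)=\eta(b)=0$, and take $|\varepsilon|\le1$. The integrand $\Kkin(\dot\xi+\varepsilon\dot\eta)$ is smooth in $\varepsilon$. Its $\varepsilon$-derivative $\sinh(\dot\xi+\varepsilon\dot\eta)\,\dot\eta$ is dominated by $\cosh(M+\|\dot\eta\|_\infty)\,\|\dot\eta\|_\infty$, a constant. Dominated convergence therefore justifies differentiating under the integral, and
\[
\delta\actionA(\eta)=\int_a^b\sinh(\dot\xi(t))\,\dot\eta(t)\,dt .
\]
Because $\xi\in C^2$, the function $\sinh(\dot\xi)$ is $C^1$. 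Integrating by parts and using the vanishing boundary values of $\eta$ gives
\[
\delta\actionA(\eta)=-\int_a^b\cosh(\dot\xi)\,\ddot\xi\,\eta\,dt .
\]

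\emph{Critical point iff $\ddot\xi\equiv0$.} If $\ddot\xi\equiv0$, the first variation vanishes for every admissible $\eta$, so $\gamma$ is a kinetic critical point. Conversely, suppose the variation vanishes for all such $\eta$. The function $\cosh(\dot\xi)\ddot\xi$ is continuous, so the fundamental lemma (du Bois-Reymond) gives $\cosh(\dot\xi)\ddot\xi\equiv0$ pointwise. Since $\cosh>0$, this forces $\ddot\xi\equiv0$. Hence $\xi(t)=\log x_a+\dot\xi_0(t-a)$ is affine, and $\gamma(t)=x_a e^{\dot\xi_0(t-a)}$.

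\emph{Fixed endpoints and global minimality.} Imposing $\gamma(b)=x_b$ determines the slope uniquely as $\dot\xi_0=(\log x_b-\log x_a)/(b-a)$. So the unique critical point with these endpoints is the path $\gamma_*$ of Corollary~\ref{cor:uniform}. That corollary already shows $\gamma_*$ is the unique global minimizer among kinetically admissible competitors; Theorem~\ref{thm:pythagorean} gives the same conclusion via the Bregman identity.

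The only step needing care is differentiating under the integral sign. It is handled by the uniform bound on $\dot\xi$ coming from the $C^2$ hypothesis; everything else is routine.
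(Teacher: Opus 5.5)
Your proposal is correct and follows essentially the same route as the paper's proof. Both use boundedness of $\dot\xi$ for kinetic admissibility and for differentiating under the integral, then integrate by parts, apply the fundamental lemma, use $\cosh>0$ to get $\ddot\xi\equiv 0$, and invoke Corollary~\ref{cor:uniform} for global minimality; your explicit dominating constant only spells out what the paper states in one line.
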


\begin{proof}
Since $\xi\in C^2([a,b])$, the derivative $\dot\xi$ is continuous and
bounded. Hence $\Kkin(\dot\xi)$ is continuous and integrable, so
$\gamma$ is kinetically admissible.

For $\eta\in C^1([a,b])$ with $\eta(a)=\eta(b)=0$, differentiating
under the integral is justified because $\dot\xi$ and $\dot\eta$ are
bounded and $\Kkin'$ is continuous on the compact range swept out by
$\dot\xi+\varepsilon\dot\eta$ for $|\varepsilon|$ small. The first
variation is
\[
  \left.\frac{d}{d\varepsilon}\right|_{\varepsilon=0}
  \int_a^b \Kkin(\dot\xi+\varepsilon\dot\eta)\,dt
  = \int_a^b \sinh(\dot\xi(t))\,\dot\eta(t)\,dt.
\]
Because $\xi\in C^2$, the function $\sinh(\dot\xi)$ is $C^1$, and
integration by parts gives
\[
  \int_a^b \sinh(\dot\xi)\,\dot\eta\,dt
  = \bigl[\sinh(\dot\xi)\eta\bigr]_a^b
    - \int_a^b \cosh(\dot\xi)\ddot\xi\,\eta\,dt
  = - \int_a^b \cosh(\dot\xi)\ddot\xi\,\eta\,dt.
\]
Thus the first variation vanishes for all endpoint-fixed
$\eta\in C^1([a,b])$ iff
$\cosh(\dot\xi)\ddot\xi=0$ on $[a,b]$ by the fundamental lemma of the
calculus of variations. Since $\cosh(\dot\xi)>0$, this is equivalent
to $\ddot\xi=0$. Therefore $\xi$ is affine, and conversely any affine
$\xi$ makes the displayed first variation vanish.

With fixed endpoints, the affine function $\xi$ is uniquely
determined, giving the path of Corollary~\ref{cor:uniform}. Its global
minimality is exactly Corollary~\ref{cor:uniform}.
\end{proof}

\begin{remark}[Regularity classes]\label{rem:EL-class}
Theorem~\ref{thm:kinetic-EL} uses the strong pointwise Euler--Lagrange
notion of ``critical point,'' which requires $\xi\in C^2$ (a proper
subclass of the kinetically admissible paths of
Definition~\ref{def:kin-adm}). The chord condition in
Theorem~\ref{thm:pla} is instead a convexity-based sufficient and
necessary condition for global minimality on the larger kinetically
admissible class, where $\xi$ need only be absolutely continuous with
$L^2$ log-derivative and finite kinetic action. For $C^2$ paths with
fixed endpoints, the uniform-log-velocity path is simultaneously the
strong-EL critical point of this theorem and the global minimizer
furnished by Corollary~\ref{cor:uniform}.
\end{remark}

\subsection{Common ground state and the Hessian-energy comparison}\label{subsec:ground-state}

So far we have compared the static $\actionJ$ and kinetic $\actionA$
Euler--Lagrange pictures. A third path-level functional also lives
on $\Rplus$ -- the Hessian Riemannian path-energy
\[
  \Ehess[\gamma] \;:=\; \int_a^b \tfrac12\,g(\gamma)\,\dot\gamma^2\,dt
             \;=\; \int_a^b \tfrac{1}{2}\,\frac{\dot\gamma(t)^2}{\gamma(t)^3}\,dt
\]
of the Hessian metric $g(x) = \Jcost''(x) = x^{-3}$, with geodesic
Euler--Lagrange equation
\begin{equation}\label{eq:geodesic}
  \ddot\gamma + \Gamma(\gamma)\,\dot\gamma^2 \;=\; 0,
  \qquad \Gamma(x) = -\frac{3}{2x}.
\end{equation}
$\Ehess$ uses the Riemannian connection of $g$, whereas $\actionA$
uses the log-affine ($e$-)connection of \S\ref{subsec:duallyflat},
and the two have different nonconstant critical families -- different
\emph{parameterizations} of the same arcs of $\Rplus$, not different
trajectories, since the carrier is $1$-dimensional
(Appendix~\ref{app:hessian}, Remark~\ref{rem:A-vs-E}). We include
$\Ehess$ here only for the comparison below; it does not enter any
subsequent theorem of this work. (The symbol $\Ehess$ is reserved
for this Hessian path-energy throughout; the unadorned $E$ later
used in \S\ref{sec:hamilton} for the conserved energy along a
trajectory is a distinct object.)

\begin{remark}[Coexistence at the ground state]\label{rem:ground-state-coexistence}
The constant path $\gamma \equiv 1$ on $[a,b]$ satisfies all three
a priori distinct critical-point conditions simultaneously: the
cost-rate Euler--Lagrange equation for $\actionJ$
(Remark~\ref{rem:cost-rate-EL}); the kinetic Euler--Lagrange
equation $\ddot\xi=0$ for $\actionA$ (Theorem~\ref{thm:kinetic-EL});
and the Hessian-metric geodesic equation \eqref{eq:geodesic}.
Verification is immediate: $\Jcost'(1) = \tfrac12(1 - 1) = 0$
gives (i); $\dot\xi \equiv 0$ gives $\ddot\xi \equiv 0$ and hence
(ii); $\dot\gamma \equiv 0$ together with $\ddot\gamma \equiv 0$
gives (iii). Although the dynamical functionals $\actionA$ and
$\Ehess$ have different nonconstant critical families -- differing in
parameterization rather than trajectory, the carrier being
$1$-dimensional -- the additional static cost-rate condition (i)
singles out $\gamma \equiv 1$ as the common normalized reference state
shared by all three critical sets. The pictures remain distinct away
from the ground state; we make no further bridging claim.
\end{remark}

\section{Conditional Bridge to Newtonian Mechanics}\label{sec:newton}

Having established in Sections~\ref{sec:cost}--\ref{sec:el} that
the d'Alembert calibration alone forces the convex free-sector
structure of $\actionA$, we now turn to the physical bridge. We
shall see that the cosh kinetic profile recovers the Newtonian
small-step limit and the relativistic rapidity dependence only
after four explicit additional postulates, which we name and
isolate at the outset:
\begin{itemize}[noitemsep,topsep=2pt]
  \item \textbf{Kinematic embedding.} The mechanical
    position $q$ is identified with the log-coordinate
    $\xi = \log\gamma$ of a positive degree of freedom.
  \item \textbf{Mass coupling.} A species carries a
    coupling constant $m > 0$ that multiplies the dimensionless
    $\Kkin$; this gives the Newtonian small-step limit and matches
    the relativistic rapidity profile of the same cosh form.
  \item \textbf{Hamiltonian primacy.} The Hamiltonian
    $H = T_H(p) + V(\xi)$ is the primary, additively combined
    cost; the Lagrangian $L = \Kkin_m - V$ is its Legendre dual,
    and the minus sign on $V$ is forced by the transform, not
    posited.
  \item \textbf{Native state cost.} The same d'Alembert
    uniqueness that forces $\Kkin$ also forces a native static
    log-cost $\Jlog(\xi) = \cosh\xi - 1$; the native Lagrangian is
    $L_{\mathrm{nat}} = \Kkin_m(\dot\xi) - k\,\Jlog(\xi)$, a
    cosh-sinh oscillator. General potentials $V$ are external
    inputs from the cost-field environment, outside the scope of
    our analysis.
\end{itemize}
These four ingredients make the Newtonian interpretation a derivation
of clearly bounded scope rather than an identification asserted at the
outset.

\subsection{Foundations of the bridge}\label{subsec:foundations}

\begin{definition}[Kinematic embedding axiom]\label{def:embedding}
\emph{Status.} The identification below is an \emph{axiom} of the
physical bridge, not a theorem of d'Alembert. We postulate that
the mechanical position coordinate $q$ of classical mechanics is
the log-coordinate $\xi=\log\gamma$ of a positive degree
of freedom, and that the dimensionless log-step
$\phi := d\xi/d\tau$ is calibrated as a boost rapidity. Neither
identification is forced by the convexity theorems of
Sections~\ref{sec:cost}--\ref{sec:el}; both are physical
postulates whose role is exactly to supply the kinematic content
that d'Alembert's algebra alone cannot.

\smallskip
\noindent\emph{Statement.}

Fix the species-relevant invariant speed $c > 0$ and a reference
time scale $t_0>0$. We write
\[
  \tau := t/t_0
\]
for the associated dimensionless evolution parameter. Throughout
Sections~\ref{sec:newton}--\ref{sec:hamilton} we work in
\emph{natural units}, in which $c=1$ and $t_0=1$, and restore
standard units explicitly at each appearance of a dimensional
quantity.

For the physical bridge in
Sections~\ref{sec:newton}--\ref{sec:hamilton} we identify the
variational parameter used in the preceding sections with $\tau$.
Thus a dot in these sections denotes differentiation with respect
to the dimensionless parameter $\tau$ unless a physical-time
derivative such as $dq/dt$ is written explicitly. In natural units
$t=\tau$, so the dot notation agrees with the usual mechanical
notation.

We identify a positive degree of freedom
$\gamma \in \Rplus$ with classical kinematic data via
\[
  \xi \;:=\; \log\gamma \;\in\;\R, \qquad
  q \;:=\; \xi \quad\text{(natural units)}
  \quad\Bigl[\text{equivalently } q := c\,t_0\,\xi\text{ in standard units}\Bigr],
\]
and we identify the dimensionless log-step per unit dimensionless
time with the boost rapidity:
\begin{equation}\label{eq:rapidity-velocity}
  \phi \;:=\; \frac{d\xi}{d\tau},\qquad
  \vsr/c \;:=\; \tanh(\phi),\qquad
  \frac{dq}{dt} \;=\; c\,\frac{d\xi}{d\tau}
  \quad\Bigl[= \dot\xi = \phi \text{ in natural units } c=t_0=1\Bigr].
\end{equation}
The mechanical ground state $q = 0$ corresponds to the
ground state $\gamma = 1$.
\end{definition}

\begin{remark}[Dimensional dictionary]\label{rem:dim-dict}
Sections~\ref{sec:newton}--\ref{sec:hamilton} are written in natural
units ($c = t_0 = 1$), so every symbol is either dimensionless or
carries some power of mass. The following table lists each symbol's
role together with its standard-unit (i.e., $c, t_0$ restored)
dimensions, so that the natural-unit expressions in the body can be
read in either convention.

\smallskip
\noindent\begin{tabular}{@{}lll@{}}
\hline
Symbol & Description & Standard-unit dimensions \\ \hline
$\xi$ & log-coordinate, $\xi=\log\gamma$ & dimensionless \\
$\tau = t/t_0$ & dimensionless time & dimensionless \\
$t$ & physical time & $T$ \\
$t_0$ & reference time scale & $T$ \\
$c$ & invariant speed & $LT^{-1}$ \\
$\phi = d\xi/d\tau$ & rapidity (boost parameter) & dimensionless \\
$\vsr = c\tanh\phi$ & SR $3$-velocity & $LT^{-1}$ \\
$q = c\,t_0\,\xi$ & mechanical position & $L$ \\
$dq/dt = c\,\phi$ & coordinate velocity & $LT^{-1}$ \\
$m$ & mass coupling (Def.~\ref{def:mass}) & $M$ \\
$k$ & binding coupling (\S\ref{subsec:oscillator}) & $M L^2 T^{-2}$ (energy) \\
$\Kkin_m(\phi) = m c^2 (\cosh\phi - 1)$ & physical kinetic cost & $M L^2 T^{-2}$ (energy) \\
$p = \partial L/\partial\dot\xi$ & conjugate to $\xi$ & $M L^2 T^{-2}$ (energy) \\
$L,\,H$ & Lagrangian, Hamiltonian & $M L^2 T^{-2}$ (energy) \\
$\actionA[\gamma]$ & free-sector kinetic action & dimensionless \\
$m c^2 t_0\,\actionA[\gamma],\ \actionL_V[\gamma]=\int L\,dt$
  & physical action & $M L^2 T^{-1}$ (action) \\
\hline
\end{tabular}

\smallskip
\noindent\emph{Notes.} (i) Because $\xi$ is dimensionless, the
conjugate $p := \partial L/\partial\dot\xi$ has dimensions of
\emph{energy}, not of mechanical momentum $M L T^{-1}$; the
conventional SR-mechanical momentum is $p/c$, with units
$M L T^{-1}$. (ii) The position identification $q = c\,t_0\,\xi$ in
standard units is the dimensionally consistent reading that makes
$dq/dt = c\,\phi$ a proper velocity ($LT^{-1}$); in natural units it
collapses to $q = \xi$ and $\dot q = \phi$ as written. (iii) In
natural units the physical kinetic cost $\Kkin_m(\phi)$ and the
binding cost $k(\cosh\xi-1)$ both reduce to mass times a
dimensionless cosh-bump; the explicit $c^2$ factors are restored only
in expressions written outside of natural units. (iv) The
free-sector functionals $\actionA[\gamma]$ and
$\actionA_*(T,\Delta)$ of
Sections~\ref{sec:pathspace}--\ref{sec:convexity} are
\emph{dimensionless}, consistent with Remark~\ref{rem:notation}; the
symbol $\actionA$ acquires action dimensions only through the
mass--time prefactor, as the physical kinetic action
$m c^2 t_0\,\actionA[\gamma]$, which in the free case $V\equiv 0$
coincides with $\actionL_V[\gamma]=\int L\,dt$. The two rows above
list these two distinct objects separately.
\end{remark}

\begin{remark}[Two velocity notions and why rapidity is primary]\label{rem:two-velocities}
Two observations force the rapidity identification of
Definition~\ref{def:embedding} rather than a direct
``$\dot\xi = $ Newtonian (SR $3$-)velocity'' reading, and
Equation~\eqref{eq:rapidity-velocity} carries two distinct
``velocity-like'' quantities that must not be confused.

\smallskip
\noindent\textbf{(a) Algebra.} d'Alembert's equation
\eqref{eq:dAlembert} on the additive variable $\xi$ is precisely
the \emph{composition law for boosts}: $\cosh(\phi_1 + \phi_2)
+ \cosh(\phi_1 - \phi_2) = 2\cosh(\phi_1)\cosh(\phi_2)$. The
canonical additive parameter on which this composition is linear
is the rapidity $\phi$, not the SR $3$-velocity $\vsr$ (which
composes nonlinearly via the Einstein addition formula).

\smallskip
\noindent\textbf{(b) Dimensions.} If $\gamma$ is dimensionless,
then so is $\xi$. Differentiating $\xi$ with respect to physical
time gives a quantity with dimensions of inverse time, so we
compare instead with the dimensionless step
$\phi = d\xi/d\tau$, where $\tau=t/t_0$. The Newtonian
identification ``$q=\xi$'' is dimensionally consistent only after
choosing natural units; in standard units it becomes $q=c\,t_0\,\xi$
and $dq/dt = c\,d\xi/d\tau = c\,\phi$ (Remark~\ref{rem:dim-dict}).
The rapidity identification \eqref{eq:rapidity-velocity} is
dimension-free because it uses $\phi$, not the unscaled derivative
with respect to physical time.

\smallskip
\noindent\textbf{(c) Two velocity notions: $\vsr$ versus $\dot q$.}
Under the kinematic axiom \eqref{eq:rapidity-velocity}, the
\emph{coordinate (log-) velocity}
$\dot q = dq/dt = c\,\phi
   \overset{\text{nat.~units}}{=} \phi$
is the rate of change of the log-coordinate $q=\xi$ with respect
to lab time; under the kinematic axiom this \emph{is the rapidity
itself} (in natural units). The
\emph{special-relativistic $3$-velocity} $\vsr := c\tanh(\phi)$
is defined by the rapidity-to-velocity formula of special
relativity. The two agree only to leading order. In natural units,
where $\dot q=\phi$ and $c=1$,
\[
  \vsr = \tanh(\dot q)
       = \dot q - \frac{\dot q^3}{3} + O(\dot q^5),
\]
while in standard units, using $\phi=\dot q/c$,
\[
  \vsr = c\tanh(\dot q/c)
       = \dot q - \frac{\dot q^3}{3c^2} + O(\dot q^5/c^4).
\]
For
$|\dot q|\ll 1$ they coincide; for large $|\dot q|$, $\vsr$
saturates at $\pm c$ while $\dot q=\phi$ is unbounded. The
Newtonian small-step limit (Proposition~\ref{prop:rapidity}(ii)
and Theorem~\ref{thm:small-v}) lives where the distinction is
invisible at leading order; the relativistic profile match
(Proposition~\ref{prop:rapidity}(i)) lives where the distinction
is sharp. We always use $\vsr$ for the SR $3$-velocity
$c\tanh\phi$, distinct from $\dot q$ except at $\phi=0$.

\smallskip
\noindent The Newtonian-limit identification of the log-step
variable with velocity in natural units is therefore not an
independent embedding; it is the small-rapidity consequence
$\phi \approx \vsr/c$ for $|\vsr|\ll c$
(Proposition~\ref{prop:rapidity}(ii)). Under the kinematic axiom,
$\dot q = \phi$ \emph{exactly} in natural units.
\end{remark}

With Definition~\ref{def:embedding} in place, the results of
Sections~\ref{sec:convexity}--\ref{sec:el} are re-readable as
statements about a genuine scalar mechanical coordinate $q$.

\begin{definition}[Species mass coupling]\label{def:mass}
Each particle species carries a \emph{mass coupling} $m > 0$ (with
dimensions of mass), a species-specific prefactor on the
dimensionless d'Alembert-forced kinetic cost. The \emph{physical
kinetic cost} is, in natural units ($c=t_0=1$),
\[
  \Kkin_m(\phi) \;:=\; m\,\Kkin(\phi)
    \;=\; m\bigl[\cosh(\phi) - 1\bigr],
\]
and in standard units the same object reads
$\Kkin_m(\phi) = m c^2[\cosh(\phi) - 1]$, with dimensions of
energy. When the dimensionless parameter is chosen as $\tau=t/t_0$,
the physical action includes the corresponding time factor
$dt=t_0\,d\tau$. We write $\Kkin_m$ throughout for the natural-unit
form, where $c=t_0=1$ and $\phi=\dot\xi$.
\end{definition}

\begin{proposition}[Single cosh profile covers Newtonian and rapidity regimes]\label{prop:rapidity}
Let $m>0$. With the rapidity identification
\eqref{eq:rapidity-velocity} of Definition~\ref{def:embedding}, the
notational distinction $\dot q = \phi$ versus
$\vsr = c\tanh\phi$ of Remark~\ref{rem:two-velocities} in force:
\begin{enumerate}[label=(\roman*),noitemsep]
  \item \textbf{Relativistic rapidity profile (as a function of
    rapidity, or equivalently of $\vsr$).} For all
    rapidities $\phi \in \R$,
    \[
      \Kkin_m(\phi) \;=\; m\bigl[\cosh(\phi) - 1\bigr]
                  \;=\; m(\gamma_L - 1),
    \]
    where $\gamma_L := \cosh\phi = 1/\sqrt{1 - (\vsr/c)^2}$ is the
    Lorentz factor associated with $\vsr/c = \tanh\phi$. This is the
    exact relativistic kinetic-energy dependence on rapidity; in
    standard units the right-hand side reads $m c^2(\gamma_L - 1)$.
    This is a profile match for the d'Alembert step cost, not a
    claim that the Legendre Hamiltonian below is the standard
    special-relativistic Hamiltonian.
  \item \textbf{Newtonian limit (consequence).} For $|\phi| \ll 1$
    (equivalently $|\vsr| \ll c$, equivalently $|\dot q|\ll 1$ in
    natural units), the Taylor expansion of $\cosh$ gives
    \[
      \Kkin_m(\phi)
        \;=\; \tfrac12 m\phi^2 + \tfrac{m}{24}\phi^4 + O(\phi^6).
    \]
    Substituting either of the two leading-order equivalences
    $\phi = \dot q$ (exact, by~\eqref{eq:rapidity-velocity}) or
    $\phi = \vsr/c + (\vsr/c)^3/3 + O((\vsr/c)^5)$ gives, in
    natural units $c=t_0=1$,
    \[
      \Kkin_m \;=\; \tfrac12 m\,\dot q^2 + \tfrac{m}{24}\dot q^4 + O(\dot q^6)
              \;=\; \tfrac12 m\,\vsr^2 + O(\vsr^4),
    \]
    the standard Newtonian kinetic energy. The two readings
    ``$\dot q$'' and ``$\vsr$'' coincide to leading order $v^2$ and
    differ only at $O(v^4)$.
\end{enumerate}
\end{proposition}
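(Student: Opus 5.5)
The plan is to verify both parts by direct computation from the definition $\Kkin_m(\phi)=m[\cosh\phi-1]$ (Definition~\ref{def:mass}) and the kinematic identifications \eqref{eq:rapidity-velocity}; no variational input is needed.

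For (i), set $\beta:=\vsr/c=\tanh\phi$. First I use the identity $\cosh^2\phi-\sinh^2\phi=1$ to get $1-\tanh^2\phi=\operatorname{sech}^2\phi$. Since $\cosh\phi>0$, the positive square root is the right one, so
\[
  \frac{1}{\sqrt{1-\beta^2}}=\cosh\phi=\gamma_L .
\]
Substituting gives $\Kkin_m(\phi)=m(\gamma_L-1)$ for every $\phi\in\R$. This holds globally because $\tanh:\R\to(-1,1)$ is a bijection, so rapidity and $\vsr$ are equivalent parametrizations. Restoring units through the factor $c^2$ of Definition~\ref{def:mass} gives $mc^2(\gamma_L-1)$. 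The disclaimer about Hamiltonians is interpretive and needs no proof here; it is deferred to Proposition~\ref{prop:not-SR}.

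For (ii), the first expansion is $m$ times Proposition~\ref{prop:Kkin}(iv), or directly the $\cosh$ series. The substitution $\phi=\dot q$ is exact in natural units by \eqref{eq:rapidity-velocity}, so the $\dot q$-form follows immediately.

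For the $\vsr$-form, I invert $\vsr=\tanh\phi$ as $\phi=\operatorname{artanh}(\vsr)=\vsr+\vsr^3/3+O(\vsr^5)$. Then $\phi^2=\vsr^2+\tfrac23\vsr^4+O(\vsr^6)$ and $\phi^4=\vsr^4+O(\vsr^6)$. This gives
\[
  \Kkin_m=\tfrac12 m\vsr^2+\bigl(\tfrac13+\tfrac1{24}\bigr)m\vsr^4+O(\vsr^6)
  =\tfrac12 m\vsr^2+O(\vsr^4),
\]
as claimed. As a cross-check, the exact form from (i), $m[(1-\vsr^2)^{-1/2}-1]=\tfrac12 m\vsr^2+\tfrac38 m\vsr^4+\cdots$, has the same $\vsr^4$ coefficient $\tfrac13+\tfrac1{24}=\tfrac38$. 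The final sentence of (ii) is then a comparison of coefficients: the $\dot q$ and $\vsr$ expansions share the $\tfrac12 m v^2$ term and differ at quartic order ($\tfrac1{24}$ versus $\tfrac38$).

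None of this is deep. The only points needing care are the sign choice of the square root in (i) and tracking the $O(\cdot)$ orders correctly in the series inversion in (ii).
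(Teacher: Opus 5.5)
Your proposal is correct and follows essentially the same route as the paper. Part (i) uses $\cosh(\operatorname{artanh}\beta)=(1-\beta^2)^{-1/2}$ (your $\operatorname{sech}^2$ identity is the same fact), and part (ii) uses the $\cosh$ Taylor series, the exact substitution $\phi=\dot q$, and the $\operatorname{artanh}$ expansion; your explicit quartic coefficient $\tfrac38$, cross-checked against the expansion of $m(\gamma_L-1)$, is a small sharpening that the paper does not spell out.
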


\begin{proof}
(i) is the identity
$\cosh(\operatorname{artanh}\beta) = (1-\beta^2)^{-1/2}$ applied to
$\beta=\vsr/c$ and $\phi=\operatorname{artanh}(\vsr/c)$, multiplied
by $m$; equivalently, $\cosh\phi = \gamma_L$ by definition of
rapidity.

(ii) The first expansion is the standard cosh Taylor series
$\cosh(\phi) - 1 = \tfrac12\phi^2 + \tfrac{1}{24}\phi^4 + O(\phi^6)$
multiplied by $m$, with the sharp explicit error bound
$|\Kkin_m(\phi) - \tfrac12 m\phi^2| \le \tfrac{m\phi^4}{24(1-\phi^2)}$
for $|\phi|<1$ (Proposition~\ref{prop:Kkin}(iv) multiplied by $m$).

Substituting $\phi=\dot q$ (an \emph{exact} identification under the
kinematic axiom~\eqref{eq:rapidity-velocity} in natural units)
directly yields $\Kkin_m = \tfrac12 m\dot q^2 + \tfrac{m}{24}\dot
q^4 + O(\dot q^6)$.

Substituting instead the SR $3$-velocity expansion
$\phi = \operatorname{artanh}(\vsr/c) =
\vsr/c + (\vsr/c)^3/3 + O((\vsr/c)^5)$ gives, in natural units,
$\phi^2 = \vsr^2 + 2\vsr\cdot \vsr^3/3 + O(\vsr^6) = \vsr^2 +
O(\vsr^4)$, so $\tfrac12 m\phi^2 = \tfrac12 m\vsr^2 + O(\vsr^4)$,
and the quartic-and-higher tail $\tfrac{m}{24}\phi^4 +O(\phi^6) = O(\vsr^4)$
is absorbed in the same remainder.
The agreement of the two readings at the $\vsr^2$ level is then
$\dot q^2 = \vsr^2 + O(\vsr^4)$, which follows from $\dot q = \phi
= \vsr/c + O((\vsr/c)^3)$.
\end{proof}

\begin{remark}[Provenance of the mass value]
The mass coupling $m$ is the only particle-species parameter in
the kinetic sector. We do not produce its provenance -- why a
given species carries a given numerical $m$ -- here; it is
determined by deeper structural properties of the species that lie
outside the scope of this paper. Within our scope, $m$ is an input.
What we derive is the \emph{form} $m\Kkin(\phi)$ of the kinetic
Lagrangian, with the same $\cosh - 1$ function covering the
Newtonian small-step limit and the rapidity profile.
\end{remark}

\begin{definition}[Additive cost postulate]\label{def:additive}
The \emph{total instantaneous cost} of a trajectory in a
degree of freedom $\xi$ with conjugate momentum $p$ decomposes
additively into a velocity cost and a state cost:
\[
  H(\xi, p) \;=\; T_H(p) \;+\; V(\xi),
\]
where $T_H$ is the Legendre transform of the physical kinetic cost
$\Kkin_m$ and $V : \R \to \R$ is the state cost. $H$ is the
\emph{Hamiltonian} and is the primary dynamical object of the
framework.
\end{definition}

\begin{proposition}[The cosh kinetic Hamiltonian]\label{prop:TH}
Let $m>0$. For the physical kinetic cost
$\Kkin_m(\dot\xi) = m[\cosh\dot\xi - 1]$,
\[
  p \;=\; \frac{\partial\Kkin_m}{\partial\dot\xi} \;=\; m\sinh\dot\xi,
  \qquad
  \dot\xi \;=\; \operatorname{arsinh}\!\left(\frac{p}{m}\right),
\]
and the Legendre transform is
\[
  T_H(p) \;=\; p\dot\xi - \Kkin_m(\dot\xi)
         \;=\; p\,\operatorname{arsinh}\!\left(\frac{p}{m}\right)
             - \sqrt{m^2 + p^2} + m.
\]
Small-$p$ Taylor:
\[
  T_H(p) \;=\; \frac{p^2}{2m} \;-\; \frac{p^4}{24\,m^3} \;+\; O(p^6/m^5).
\]
\end{proposition}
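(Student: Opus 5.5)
The plan is a direct Legendre computation followed by a Taylor expansion; all three displayed claims come from elementary properties of $\sinh$ and $\operatorname{arsinh}$.

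\textbf{Step 1: momentum and its inversion.} Differentiate $\Kkin_m(v)=m(\cosh v-1)$ to get $p=\Kkin_m'(v)=m\sinh v$. The map $v\mapsto m\sinh v$ is a smooth strictly increasing bijection $\R\to\R$, since its derivative is $m\cosh v\ge m>0$. Hence it inverts globally as $\dot\xi=\operatorname{arsinh}(p/m)$. This also confirms that the Legendre transform
\[
T_H(p)=\sup_v\,[pv-\Kkin_m(v)]
\]
is attained exactly at this $v$. Indeed, $\Kkin_m$ is strictly convex (Proposition~\ref{prop:Kkin}(iii)) and has superlinear growth, so the stationarity condition characterizes the unique maximizer.

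\textbf{Step 2: closed form.} Substitute $v=\operatorname{arsinh}(p/m)$ into $pv-m\cosh v+m$. Use $\cosh(\operatorname{arsinh} y)=\sqrt{1+y^2}$, which holds because $\cosh\ge 0$ and $\cosh^2-\sinh^2=1$. This gives
\[
m\cosh v=m\sqrt{1+p^2/m^2}=\sqrt{m^2+p^2},
\]
using $m>0$, and hence the stated formula for $T_H$.

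\textbf{Step 3: small-$p$ expansion.} This is the only place where care is needed. I will avoid multiplying two series by hand and instead use the Legendre derivative identity
\[
T_H'(p)=\dot\xi(p)=\operatorname{arsinh}(p/m),
\]
which follows from the envelope theorem or by direct differentiation, where the terms $\operatorname{arsinh}$ and $\pm p/\sqrt{m^2+p^2}$ cancel. From the series $\operatorname{arsinh} y=y-y^3/6+O(y^5)$ we get
\[
T_H'(p)=\frac{p}{m}-\frac{p^3}{6m^3}+O\!\left(\frac{p^5}{m^5}\right).
\]
Integrating from $0$, with $T_H(0)=0\cdot 0-m+m=0$, yields
\[
T_H(p)=\frac{p^2}{2m}-\frac{p^4}{24m^3}+O\!\left(\frac{p^6}{m^5}\right).
\]
As a cross-check, I can expand $y\operatorname{arsinh}y-\sqrt{1+y^2}+1$ directly:
\[
\left(y^2-\tfrac{y^4}{6}\right)-\left(\tfrac{y^2}{2}-\tfrac{y^4}{8}\right)=\tfrac{y^2}{2}-\tfrac{y^4}{24}.
\]
Multiplying by $m$ with $y=p/m$ gives the same result, so the sign and the coefficient $1/24$ agree.

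The remainder is uniform for $|p|<m$, because the series for $\operatorname{arsinh}(p/m)$ and for $\sqrt{1+p^2/m^2}$ converge there. I do not expect a genuine obstacle anywhere in this argument.
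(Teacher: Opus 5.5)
Your proposal is correct and follows the paper's route: invert $p=m\sinh\dot\xi$, use $\cosh(\operatorname{arsinh}u)=\sqrt{1+u^2}$ to reach the closed form, then Taylor-expand. The only variation is in the expansion step. The paper expands $p\operatorname{arsinh}(p/m)$ and $\sqrt{m^2+p^2}$ separately and subtracts, which is exactly your cross-check. Your primary argument instead integrates the single series of $T_H'(p)=\operatorname{arsinh}(p/m)$ from $T_H(0)=0$, which is slightly cleaner and makes the $O(p^6/m^5)$ remainder immediate.
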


\begin{proof}
The identities $p = m\sinh\dot\xi$ and $\dot\xi = \operatorname{arsinh}(p/m)$
are immediate from the definition of $p$ and the
inverse-hyperbolic-sine. Using
$\cosh(\operatorname{arsinh}(u)) = \sqrt{1 + u^2}$ with $u = p/m$,
\(
  \Kkin_m(\dot\xi) = m[\sqrt{1 + p^2/m^2} - 1] = \sqrt{m^2 + p^2} - m,
\)
and therefore
\(
  T_H(p) = p\,\operatorname{arsinh}(p/m) - \sqrt{m^2 + p^2} + m.
\)
For the Taylor expansion:
$\operatorname{arsinh}(p/m) = p/m - p^3/(6 m^3) + O(p^5/m^5)$, so
$p\operatorname{arsinh}(p/m) = p^2/m - p^4/(6 m^3) + O(p^6/m^5)$;
and
$\sqrt{m^2 + p^2} = m + p^2/(2m) - p^4/(8 m^3) + O(p^6/m^5)$.
Subtracting,
\[
  T_H(p)
    = \tfrac{p^2}{m} - \tfrac{p^4}{6 m^3} - m - \tfrac{p^2}{2m}
      + \tfrac{p^4}{8 m^3} + m + O(p^6/m^5)
    = \tfrac{p^2}{2m}
      + \left(\tfrac{1}{8} - \tfrac{1}{6}\right)\tfrac{p^4}{m^3} + O(p^6/m^5)
    = \tfrac{p^2}{2m} - \tfrac{p^4}{24\,m^3} + O(p^6/m^5).
\]
\end{proof}

\begin{proposition}[Cosh-dual Hamiltonian is distinct from the SR free-particle Hamiltonian]\label{prop:not-SR}
Let $m > 0$. The cosh-dual Hamiltonian
\[
  T_H(p) \;=\; p\,\operatorname{arsinh}\!\Bigl(\frac{p}{m}\Bigr)
              \;-\; \sqrt{m^2 + p^2} \;+\; m
\]
of Proposition~\ref{prop:TH} and the special-relativistic
free-particle Hamiltonian (with rest energy subtracted)
\[
  H_{\mathrm{SR}}(p) \;:=\; \sqrt{m^2 + p^2} \;-\; m
\]
are distinct functions on $\R$. More precisely:

\begin{enumerate}[label=\textup{(\roman*)},leftmargin=*]
  \item \textbf{Strict separation away from zero.} Let
  $f(p) := T_H(p) - H_{\mathrm{SR}}(p)
        = p\,\operatorname{arsinh}(p/m) - 2\sqrt{m^2+p^2} + 2m$.
  Then
  \[
    f''(p) \;=\; \frac{p^2}{(m^2 + p^2)^{3/2}} \;\ge\; 0,
  \]
  with equality only at $p = 0$, so $f$ is convex on $\R$ and
  strictly convex off the single point $p = 0$. Together with
  $f(0) = 0$ and $f'(0) = 0$, this gives
  \[
    T_H(p) \;>\; H_{\mathrm{SR}}(p) \qquad\text{for every } p \neq 0.
  \]

  \item \textbf{Newtonian agreement, quartic divergence.} The two
  Hamiltonians share the Newtonian small-$p$ limit at order $p^2$
  but differ at order $p^4$:
  \begin{align*}
    T_H(p) &\;=\; \frac{p^2}{2m} - \frac{p^4}{24\,m^3} + O(p^6/m^5),\\
    H_{\mathrm{SR}}(p) &\;=\; \frac{p^2}{2m} - \frac{p^4}{8\,m^3} + O(p^6/m^5),
  \end{align*}
  so
  \[
    T_H(p) - H_{\mathrm{SR}}(p)
      \;=\; \frac{p^4}{12\,m^3} + O(p^6/m^5).
  \]
\end{enumerate}
The match of Proposition~\ref{prop:rapidity} is therefore one of
\emph{profile} in rapidity -- kinetic energy as a function of
rapidity equals $m(\cosh\phi - 1) = m(\gamma_L - 1)$ in both
theories -- not an identity of Hamiltonians. We use the cosh-dual
$T_H$ throughout.
\end{proposition}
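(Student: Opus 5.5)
The approach is to work directly with the difference $f(p)=T_H(p)-H_{\mathrm{SR}}(p)=p\,\operatorname{arsinh}(p/m)-2\sqrt{m^2+p^2}+2m$, using the explicit formula for $T_H$ from Proposition~\ref{prop:TH}.

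For part (i), the plan is to differentiate $f$ twice.
\begin{itemize}
  \item Since $\frac{d}{dp}\,p\operatorname{arsinh}(p/m)=\operatorname{arsinh}(p/m)+p/\sqrt{m^2+p^2}$ and $\frac{d}{dp}\sqrt{m^2+p^2}=p/\sqrt{m^2+p^2}$, we get
  \[
    f'(p)=\operatorname{arsinh}(p/m)-\frac{p}{\sqrt{m^2+p^2}}.
  \]
  \item Differentiating again gives
  \[
    f''(p)=\frac{1}{\sqrt{m^2+p^2}}-\frac{m^2}{(m^2+p^2)^{3/2}}=\frac{p^2}{(m^2+p^2)^{3/2}},
  \]
  which is $\ge0$ and vanishes only at $p=0$.
  \item Direct evaluation gives $f(0)=0-2m+2m=0$ and $f'(0)=0$.
\end{itemize}

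The strict inequality then needs a small argument, because $f''$ vanishes at $0$. Since $f''>0$ on $(0,\infty)$, $f'$ is strictly increasing there. With $f'(0)=0$ this gives $f'>0$ on $(0,\infty)$, hence $f$ is strictly increasing there and $f(p)>f(0)=0$ for $p>0$.

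For $p<0$, note that $f$ is even: $p\operatorname{arsinh}(p/m)$ is a product of two odd functions, and the square-root term is even. Evenness transfers the conclusion to $p<0$. Alternatively, one can use Taylor's theorem with integral remainder, $f(p)=\int_0^p(p-u)f''(u)\,du$, whose integrand is nonnegative and positive almost everywhere. Together these give $T_H>H_{\mathrm{SR}}$ for every $p\neq0$.

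For part (ii), the $T_H$ expansion is already given in Proposition~\ref{prop:TH}. The binomial series yields $\sqrt{m^2+p^2}=m+p^2/(2m)-p^4/(8m^3)+O(p^6/m^5)$, so $H_{\mathrm{SR}}(p)=p^2/(2m)-p^4/(8m^3)+O(p^6/m^5)$. Subtracting, and using $-1/24+1/8=1/12$, gives the stated $p^4/(12m^3)$ difference.

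As a consistency check, integrating the expansion $f''(p)=p^2/m^3+O(p^4)$ twice gives $p^4/(12m^3)$, which agrees.

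The concluding remark about profile versus Hamiltonian identity is interpretive. It follows from (i): both kinetic costs equal $m(\cosh\phi-1)$ in rapidity, yet the Legendre duals differ. This is because $p=m\sinh\phi$ is the cosh-dual conjugate, whereas SR uses $p=m\sinh\phi$ conjugate to $\vsr$, not to $\phi$.

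The main obstacle is just the algebra of $f''$ and the strictness argument at the degenerate point $p=0$. Both are routine once $f'$ is written in closed form.
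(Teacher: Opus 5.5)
Your proposal is correct and follows the paper's proof essentially step for step: the same closed forms for $f'$ and $f''$, the same evaluation $f(0)=f'(0)=0$, and the same binomial-series subtraction for (ii). The only variation is in the strictness step, where you use strict monotonicity of $f'$ on $(0,\infty)$ together with the evenness of $f$, whereas the paper argues by contradiction from convexity on $[0,p_0]$. One small slip is in your alternative integral-remainder argument: for $p<0$ the integrand $(p-u)f''(u)$ is nonpositive on $(p,0)$, and $f(p)=\int_0^p(p-u)f''(u)\,du>0$ comes from the reversed orientation, but your evenness argument already covers that case.
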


\begin{proof}
\textit{(i)} Set $f(p) := T_H(p) - H_{\mathrm{SR}}(p)
= p\,\operatorname{arsinh}(p/m) - 2\sqrt{m^2+p^2} + 2m$. Using
$(d/dp)\operatorname{arsinh}(p/m) = 1/\sqrt{m^2+p^2}$, direct
differentiation gives
\[
  f'(p) \;=\; \operatorname{arsinh}\!\Bigl(\frac{p}{m}\Bigr)
            \;+\; \frac{p}{\sqrt{m^2+p^2}}
            \;-\; \frac{2p}{\sqrt{m^2+p^2}}
        \;=\; \operatorname{arsinh}\!\Bigl(\frac{p}{m}\Bigr)
            \;-\; \frac{p}{\sqrt{m^2+p^2}},
\]
and differentiating once more, using
$(d/dp)\bigl[p/\sqrt{m^2+p^2}\bigr] = m^2/(m^2+p^2)^{3/2}$,
\[
  f''(p) \;=\; \frac{1}{\sqrt{m^2+p^2}}
              \;-\; \frac{m^2}{(m^2 + p^2)^{3/2}}
         \;=\; \frac{(m^2 + p^2) - m^2}{(m^2 + p^2)^{3/2}}
         \;=\; \frac{p^2}{(m^2 + p^2)^{3/2}}.
\]
Hence $f''(p) \ge 0$ on $\R$, with equality iff $p = 0$. In
particular $f$ is convex on $\R$ and strictly convex on any open
interval not containing $0$. The critical-point equations
$f(0) = 0\cdot 0 - 2m + 2m = 0$ and $f'(0) = 0 - 0 = 0$ identify
$p = 0$ as the unique critical point and, by convexity, the global
minimum of $f$. Suppose, toward a contradiction, that
$f(p_0) = 0$ for some $p_0 \neq 0$. By convexity of $f$ on
$[0, p_0]$ (or $[p_0,0]$ if $p_0 < 0$) with $f(0) = f(p_0) = 0$,
$f \le 0$ on this interval; combined with $f \ge 0$ from
minimality at $0$, this forces $f \equiv 0$ on the interval, hence
$f'' \equiv 0$ there. But $f''(p) > 0$ at every interior point
$p \ne 0$, contradicting $f''$ vanishing on an open subinterval.
Therefore $f(p) > 0$ for every $p \ne 0$, i.e.,
$T_H(p) > H_{\mathrm{SR}}(p)$ for every $p \ne 0$.

\textit{(ii)} The $T_H$ expansion is Proposition~\ref{prop:TH}. For
$H_{\mathrm{SR}}$, apply the binomial series
$\sqrt{1+u} = 1 + u/2 - u^2/8 + O(u^3)$ at
$u = p^2/m^2 \ge 0$ to obtain
\[
  \sqrt{m^2 + p^2} \;=\; m\,\sqrt{1 + p^2/m^2}
    \;=\; m \;+\; \frac{p^2}{2m} \;-\; \frac{p^4}{8\,m^3}
       \;+\; O(p^6/m^5),
\]
and subtract $m$ to get the displayed $H_{\mathrm{SR}}$
expansion. Subtracting the two expansions term by term, the $p^2$
terms cancel and the $p^4$ coefficient of $T_H - H_{\mathrm{SR}}$
is
\[
  -\,\frac{1}{24\,m^3} \;-\; \Bigl(-\frac{1}{8\,m^3}\Bigr)
    \;=\; \frac{-1 + 3}{24\,m^3} \;=\; \frac{1}{12\,m^3},
\]
giving $T_H(p) - H_{\mathrm{SR}}(p) = p^4/(12\,m^3) + O(p^6/m^5)$
as claimed. The positivity of this leading $p^4$ coefficient is
also implied by part (i): $f(p) > 0$ for small $p \ne 0$ together
with $f(p) = c\,p^4 + O(p^6)$ from the Taylor expansion forces
$c \ge 0$, and the explicit value $c = 1/(12\,m^3)$ confirms
$c > 0$.
\end{proof}

\begin{remark}[The Legendre sign is derived, not posited]\label{rem:sign}
From the additive postulate $H = T_H(p) + V(\xi)$, the Lagrangian
is derived via the inverse Legendre transform:
\[
  L(\xi, \dot\xi) \;=\; p\dot\xi - H
    \;=\; \bigl[p\dot\xi - T_H(p)\bigr] - V(\xi)
    \;=\; \Kkin_m(\dot\xi) - V(\xi).
\]
The minus sign on $V$ in the Lagrangian is \emph{forced} by the
Legendre transform of an additive Hamiltonian; it is not an
independent postulate. In a Hamiltonian-primary formulation,
where the primary object is the additive
cost $H$, the sign structure $L = T - V$ is a theorem, not an
axiom.
\end{remark}

In the free-sector analysis, the d'Alembert log-cost
$\Jlog(\xi) = \cosh\xi - 1$ of \S\ref{sec:cost} played two distinct
roles. Evaluated at the log-velocity $\dot\xi$, it produced the
kinetic integrand $\Kkin(v) = \Jlog(v) = \cosh v - 1$ that drives
the convexity theorem of \S\ref{sec:convexity}. Evaluated at the
log-position $\xi$ itself, the same log-cost is the integrand of
the velocity-free static cost
$\actionJ[\gamma]=\int\Jlog(\xi)\,dt$ of \S\ref{sec:pathspace},
whose algebraic Euler--Lagrange equation
(Remark~\ref{rem:cost-rate-EL}) selects the ground state
$\gamma\equiv 1$. The d'Alembert calibration therefore fixes
$\Jlog$ on \emph{both} arguments. Assigning the static cost its own
species prefactor $k > 0$ -- the \emph{binding coupling} -- gives
the \emph{native d'Alembert state cost}
$V_{\mathrm{nat}}(\xi) = k\Jlog(\xi) = k(\cosh\xi - 1)$. Thus
$\Jlog$ enters the native Lagrangian below twice under the same
calibration: once at log-velocity (kinetic) and once at log-position
(potential), with independent couplings $m$ and $k$.

\begin{definition}[Native d'Alembert Lagrangian]\label{def:native}
With kinetic mass coupling $m$ and binding coupling $k$, the
\emph{native d'Alembert Lagrangian} is
\[
  L_{\mathrm{nat}}(\xi, \dot\xi)
    \;:=\; \Kkin_m(\dot\xi) - k\,\Jlog(\xi)
    \;=\; m\bigl[\cosh(\dot\xi) - 1\bigr] - k\bigl[\cosh(\xi) - 1\bigr].
\]
In the \emph{pure d'Alembert case} $k = m$, the constants cancel
and
\(
  L_{\mathrm{nat}}(\xi, \dot\xi) = m\bigl[\cosh(\dot\xi) - \cosh(\xi)\bigr].
\)
\end{definition}

\begin{remark}[Provenance of the binding coupling]\label{rem:k-provenance}
Like the mass coupling $m$ of Definition~\ref{def:mass}, the
binding coupling $k > 0$ is treated here as an external species
parameter, subject to the same provenance caveat recorded for $m$ in
the remark above: its numerical value is governed by the same
upstream structural mechanisms that fix $m$ and lies outside the
scope of this paper. What we derive is the \emph{form}
of the native Lagrangian, with the d'Alembert log-cost $\Jlog$
appearing under the same calibration in both the kinetic slot
(evaluated at $\dot\xi$) and the potential slot (evaluated at
$\xi$); the two couplings $(m,k)$ then enter as independent
prefactors of those two slots.
\end{remark}

\begin{remark}[Native vs.\ general potentials]
$L_{\mathrm{nat}}$ is the minimal dynamical object produced by
d'Alembert alone: kinetic cost plus native static cost, combined
via the Hamiltonian-primary Legendre transform
(Remark~\ref{rem:sign}). A \emph{general} potential $V(\xi)$ in
Section~\ref{subsec:general} is the replacement of the native
$k\Jlog(\xi)$ by whatever interaction model supplies the state cost.
Here, any $V \neq k\Jlog$ is an external input that we do not
derive from d'Alembert.
\end{remark}

\subsection{Small-velocity reduction of the kinetic action}\label{subsec:small-v}

\begin{theorem}[Small-velocity reduction, mass-coupled]\label{thm:small-v}
Let $\gamma$ be an admissible path on $[a,b]$ with $\xi := \log\gamma$
and $\|\dot\xi\|_\infty \leq 1/10$ (hence in particular kinetically
admissible, since $\cosh(\dot\xi)\le\cosh(1/10)<\infty$ a.e.), and let
$m > 0$. Then
\begin{equation}\label{eq:small-v-reduction}
  \left|\,m\,\actionA[\gamma]
    \;-\; \tfrac{m}{2}\int_a^b \dot\xi(t)^2\,dt\,\right|
    \;\leq\; \frac{m}{24}\cdot\frac{100}{99}\int_a^b \dot\xi(t)^4\,dt
    \;\leq\; \frac{m}{2376}\int_a^b \dot\xi(t)^2\,dt.
\end{equation}
Under the kinematic embedding $q := \xi$ and the natural-unit
calibration of the variational parameter with $\tau$, the
mass-weighted kinetic action $m\,\actionA[\gamma]$ reduces to the
standard kinetic integral
$T[q] = \tfrac12\int_a^b m\,\dot q(t)^2\,dt$ with relative error at
most $1/1188 \approx 8.4 \times 10^{-4}$.
\end{theorem}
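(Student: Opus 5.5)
The plan is to reduce everything to the pointwise sharp quartic remainder of Proposition~\ref{prop:Kkin}(iv), exactly as in Proposition~\ref{prop:A-basic}(iii), and then multiply through by $m>0$.

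First, kinetic admissibility: since $\|\dot\xi\|_\infty\le 1/10$, we have $0\le \Kkin(\dot\xi(t))\le \cosh(1/10)-1$ for a.e.\ $t$. Hence $\Kkin(\dot\xi)\in L^1([a,b])$ on the bounded interval, and $\actionA[\gamma]$ is finite. The same bound gives $\dot\xi\in L^\infty\subset L^2\cap L^4$, so all three integrals in \eqref{eq:small-v-reduction} are finite.

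Next, fix $v=\dot\xi(t)$ with $|v|\le 1/10<1$ and apply \eqref{eq:Kkin-sharp-quartic}. This gives $|\Kkin(v)-\tfrac12 v^2|\le v^4/(24(1-v^2))$, and $v^2\le 1/100$ gives $(1-v^2)^{-1}\le 100/99$. Multiplying by $m$ and integrating over $[a,b]$, using $|\int f|\le\int|f|$, yields the first inequality of \eqref{eq:small-v-reduction}. For the second, use $v^4\le v^2/100$ pointwise. This gives $\frac{m}{24}\cdot\frac{100}{99}\cdot\frac{1}{100}\int\dot\xi^2=\frac{m}{2376}\int\dot\xi^2$.

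Finally, for the interpretive clause, under Definition~\ref{def:embedding} in natural units with the parameter identified with $\tau=t$, we have $q=\xi$ and $\dot q=\dot\xi$. Thus $\tfrac m2\int\dot\xi^2=T[q]$. The bound then reads $|m\actionA[\gamma]-T[q]|\le \frac{m}{2376}\int\dot q^2=\frac{1}{1188}T[q]$, which is the stated relative error (when $T[q]=0$ both sides vanish).

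None of these steps poses a real obstacle. The only point requiring care is the bookkeeping of the constants $100/99$ and $1/2376$ versus $1/1188$, that is, the factor $2$ from $T[q]=\tfrac m2\int\dot q^2$.
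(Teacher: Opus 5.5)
Your proof is correct and follows the paper's argument: apply the sharp quartic remainder \eqref{eq:Kkin-sharp-quartic} pointwise with $(1-v^2)^{-1}\le 100/99$, integrate against $m\,dt$, then use $v^4\le v^2/100$ and the factor $2$ from $T[q]=\tfrac m2\int\dot q^2$ to get the relative error $1/1188$. Your extra remarks on kinetic admissibility, $|\int f|\le\int|f|$, and the degenerate case $T[q]=0$ are fine additions.
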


\begin{proof}
The sharp quartic remainder \eqref{eq:Kkin-sharp-quartic} applied
pointwise at $v=\dot\xi(t)$ with $|\dot\xi(t)|\le 1/10$ gives
$|\Kkin(\dot\xi)-\tfrac12\dot\xi^2|\le \dot\xi^4/24\cdot 100/99$,
which integrated against $m\,dt$ yields the first inequality of
\eqref{eq:small-v-reduction}. The second uses
$\dot\xi^4\le \dot\xi^2/100$ when $|\dot\xi|\le 1/10$, giving a
relative-error bound of $2/2376 = 1/1188$ on the kinetic
approximation.
\end{proof}

\begin{remark}
The kinetic term emerges from the Taylor of $\Kkin(\dot\xi)$ --
\emph{velocity}, not of $\Jcost(1 + \varepsilon)$ -- state
displacement. The mass coupling $m$ enters as a uniform prefactor
consistent with the Newtonian small-step limit and the rapidity
profile of Proposition~\ref{prop:rapidity}.
\end{remark}

\subsection{The native cosh-sinh oscillator}\label{subsec:oscillator}

\begin{theorem}[EL equation and small-amplitude limit of $L_{\mathrm{nat}}$]\label{thm:native-EL}
For the native d'Alembert Lagrangian
(Definition~\ref{def:native}) with couplings $m, k > 0$, the
Euler--Lagrange equation is
\begin{equation}\label{eq:cosh-sinh}
  m\cosh(\dot\xi)\,\ddot\xi + k\sinh(\xi) \;=\; 0.
\end{equation}
In the small-velocity, small-amplitude limit
($|\dot\xi|, |\xi| \ll 1$), \eqref{eq:cosh-sinh} reduces to the
linear harmonic oscillator
\(
  m\,\ddot\xi + k\,\xi = 0,
\)
with angular frequency $\omega = \sqrt{k/m}$.
\end{theorem}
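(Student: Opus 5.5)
The plan is to compute the Euler--Lagrange equation of $L_{\mathrm{nat}}(\xi,\dot\xi)=m[\cosh\dot\xi-1]-k[\cosh\xi-1]$ directly, for $C^2$ paths $\xi$ (the same regularity class as Theorem~\ref{thm:kinetic-EL}), and then linearize. The Lagrangian is smooth in both slots, so the first variation along $\xi+\varepsilon\eta$ with $\eta\in C^1([a,b])$, $\eta(a)=\eta(b)=0$, can be differentiated under the integral. The justification is the one used in Theorem~\ref{thm:kinetic-EL}: $\xi,\dot\xi,\eta,\dot\eta$ are bounded, so $\sinh$ and $\cosh$ are bounded on the compact range swept out. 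This gives
\[
  \delta\!\int_a^b L_{\mathrm{nat}}\,dt
  =\int_a^b\bigl[m\sinh(\dot\xi)\,\dot\eta-k\sinh(\xi)\,\eta\bigr]\,dt .
\]

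Next I integrate the first term by parts, using that $m\sinh(\dot\xi)$ is $C^1$ when $\xi\in C^2$. The boundary terms vanish because $\eta(a)=\eta(b)=0$. The first variation becomes $-\int_a^b\bigl[m\cosh(\dot\xi)\ddot\xi+k\sinh(\xi)\bigr]\eta\,dt$. The bracket is continuous, so the fundamental lemma of the calculus of variations yields \eqref{eq:cosh-sinh} pointwise. Equivalently, this is $\frac{d}{dt}\partial_{\dot\xi}L=\partial_\xi L$ with $\partial_{\dot\xi}L=m\sinh\dot\xi$ and $\partial_\xi L=-k\sinh\xi$.

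For the small-amplitude limit, I substitute the Taylor expansions $\cosh(\dot\xi)=1+O(\dot\xi^2)$ and $\sinh(\xi)=\xi+O(\xi^3)$ into \eqref{eq:cosh-sinh}. This gives $m\ddot\xi+k\xi=O(\dot\xi^2\ddot\xi)+O(\xi^3)$. The remainders are of higher order than the retained linear terms when $|\xi|,|\dot\xi|\ll1$, because $\ddot\xi$ is itself of first order in the amplitude (it equals $-(k/m)\sinh\xi/\cosh\dot\xi$). Dropping them leaves $m\ddot\xi+k\xi=0$, whose solutions $A\cos(\omega t)+B\sin(\omega t)$ have $\omega=\sqrt{k/m}$.

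I expect the only delicate point to be the meaning of ``reduces to'' in the limit. I would state it at the level of the equation: dividing \eqref{eq:cosh-sinh} by $\cosh\dot\xi>0$ gives $\ddot\xi=-(k/m)\sinh\xi/\cosh\dot\xi$, and this right-hand side agrees with $-(k/m)\xi$ up to relative error $O(\xi^2+\dot\xi^2)$. I would not attempt a trajectory-level error estimate.
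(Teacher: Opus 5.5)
Your proposal is correct and follows essentially the same route as the paper: the paper computes $\partial_{\dot\xi}L_{\mathrm{nat}}=m\sinh\dot\xi$ and $\partial_\xi L_{\mathrm{nat}}=-k\sinh\xi$, plugs them into $(d/dt)\partial_{\dot\xi}L-\partial_\xi L=0$, and then Taylor-expands $\cosh\dot\xi=1+O(\dot\xi^2)$ and $\sinh\xi=\xi+O(\xi^3)$. Your version additionally derives the Euler--Lagrange formula from the first variation, mirroring the argument of Theorem~\ref{thm:kinetic-EL}. You also spell out what ``reduces to'' means at the level of the equation; both are harmless refinements of the same argument.
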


\begin{proof}
$\partial_{\dot\xi}L_{\mathrm{nat}} = m\sinh\dot\xi$, so
$(d/dt)\partial_{\dot\xi}L_{\mathrm{nat}} = m\cosh\dot\xi\,\ddot\xi$.
$\partial_\xi L_{\mathrm{nat}} = -k\sinh\xi$. The Euler--Lagrange
equation $(d/dt)\partial_{\dot\xi}L - \partial_\xi L = 0$ gives
\eqref{eq:cosh-sinh}. Taylor-expanding $\cosh\dot\xi = 1 + O(\dot\xi^2)$
and $\sinh\xi = \xi + O(\xi^3)$ recovers the harmonic oscillator.
\end{proof}

\begin{corollary}[Conserved energy of the native oscillator]\label{cor:native-energy}
For couplings $m,k>0$, the native Hamiltonian
(Proposition~\ref{prop:TH} with $V = k\Jlog$),
\[
  H_{\mathrm{nat}}(\xi, p)
    \;=\; p\,\operatorname{arsinh}\!\left(\tfrac{p}{m}\right)
        - \sqrt{m^2 + p^2} + m + k\bigl[\cosh(\xi) - 1\bigr],
\]
is constant along any $C^2$ solution of \eqref{eq:cosh-sinh}. Its
small-$(\xi, p)$ expansion is
\(
  H_{\mathrm{nat}} = \tfrac{p^2}{2m} + \tfrac{k}{2}\xi^2
  - \tfrac{p^4}{24\,m^3} + \tfrac{k}{24}\xi^4 + O(p^6, \xi^6),
\)
the harmonic oscillator Hamiltonian plus exactly determined quartic
corrections.
\end{corollary}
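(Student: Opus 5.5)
The corollary has two independent parts: conservation of $H_{\mathrm{nat}}$ along $C^2$ solutions of \eqref{eq:cosh-sinh}, and its small-$(\xi,p)$ expansion.

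\emph{Conservation.} I would not differentiate $H_{\mathrm{nat}}(\xi,p)$ through the $\operatorname{arsinh}$ formula directly. Instead, I would express it along a trajectory in velocity variables. Set $p:=m\sinh\dot\xi$, the Legendre momentum of Proposition~\ref{prop:TH}. Using $\cosh(\operatorname{arsinh}u)=\sqrt{1+u^2}$, we get $\operatorname{arsinh}(p/m)=\dot\xi$ and $\sqrt{m^2+p^2}-m=m(\cosh\dot\xi-1)$. Hence along the trajectory
\[
  H_{\mathrm{nat}} = E(t) := m\dot\xi\sinh\dot\xi - m(\cosh\dot\xi-1) + k(\cosh\xi-1),
\]
which is the usual energy function $\dot\xi\,\partial_{\dot\xi}L_{\mathrm{nat}}-L_{\mathrm{nat}}$. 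For $\xi\in C^2$, $E$ is $C^1$. Differentiating gives
\[
  \dot E = m\ddot\xi\sinh\dot\xi + m\dot\xi\cosh\dot\xi\,\ddot\xi - m\sinh\dot\xi\,\ddot\xi + k\sinh\xi\,\dot\xi = \dot\xi\bigl(m\cosh\dot\xi\,\ddot\xi + k\sinh\xi\bigr).
\]
This vanishes identically by \eqref{eq:cosh-sinh}, so $E$, and hence $H_{\mathrm{nat}}$, is constant.

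\emph{Expansion.} The kinetic part $p\operatorname{arsinh}(p/m)-\sqrt{m^2+p^2}+m$ is exactly $T_H(p)$. Proposition~\ref{prop:TH} already gives
\[
  T_H(p) = \frac{p^2}{2m} - \frac{p^4}{24m^3} + O(p^6).
\]
For the potential part, the Taylor series of $\cosh$ (Proposition~\ref{prop:Jlog}(iv)) gives
\[
  k(\cosh\xi-1) = \frac{k}{2}\xi^2 + \frac{k}{24}\xi^4 + O(\xi^6).
\]
Adding the two yields the stated expansion. The error terms separate as $O(p^6)+O(\xi^6)$ because $H_{\mathrm{nat}}$ is a sum of a function of $p$ alone and a function of $\xi$ alone, so there are no mixed terms.

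The only point needing care is the identification of $H_{\mathrm{nat}}$ with $E$, which depends on the inverse relation $\dot\xi=\operatorname{arsinh}(p/m)$ being global. That holds because $\sinh$ is a bijection of $\R$. Everything else is the routine differentiation shown above.
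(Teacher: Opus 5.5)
Your proposal is correct and follows the paper's proof essentially line for line: you rewrite $H_{\mathrm{nat}}$ along the trajectory as the velocity-space energy $E(t)$ via $p=m\sinh\dot\xi$, differentiate to get $\dot\xi\,(m\cosh\dot\xi\,\ddot\xi+k\sinh\xi)=0$, and obtain the expansion by adding the $T_H$ expansion of Proposition~\ref{prop:TH} to the Taylor series of $k(\cosh\xi-1)$. Your extra remarks, that $\sinh$ is a global bijection and that no mixed $p$--$\xi$ error terms arise, are correct refinements the paper leaves implicit.
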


\begin{proof}
\emph{Conservation.} Let $\xi \in C^2([a,b])$ satisfy
\eqref{eq:cosh-sinh}, and set $p := m\sinh(\dot\xi)$. Using
$\cosh(\operatorname{arsinh}(p/m)) = \sqrt{1 + p^2/m^2}$, the
Hamiltonian $H_\mathrm{nat}$ rewritten in $(\xi,\dot\xi)$ coordinates
along the trajectory is
\[
  E(t) \;:=\; H_\mathrm{nat}(\xi(t), p(t))
       \;=\; m\bigl[\dot\xi(t)\sinh(\dot\xi(t))
                  - \cosh(\dot\xi(t)) + 1\bigr]
            + k\bigl[\cosh(\xi(t)) - 1\bigr].
\]
Direct differentiation gives
\[
\begin{aligned}
  \dot E(t)
    \;&=\; m\bigl[\ddot\xi\sinh(\dot\xi)
                + \dot\xi\cosh(\dot\xi)\,\ddot\xi
                - \sinh(\dot\xi)\,\ddot\xi\bigr]
       + k\sinh(\xi)\,\dot\xi \\
    \;&=\; \dot\xi\bigl[m\cosh(\dot\xi)\,\ddot\xi + k\sinh(\xi)\bigr]
    \;=\; 0
\end{aligned}
\]
by \eqref{eq:cosh-sinh}; the $\ddot\xi\sinh(\dot\xi)$ and
$-\sinh(\dot\xi)\ddot\xi$ terms cancel exactly. Hence $E$ is constant
on $[a,b]$.

\emph{Expansion.} Combine Proposition~\ref{prop:TH} (giving
$T_H(p) = p^2/(2m) - p^4/(24m^3) + O(p^6/m^5)$) with the
small-$\xi$ Taylor of $\cosh$:
$k[\cosh\xi - 1] = k\xi^2/2 + k\xi^4/24 + O(\xi^6)$.
\end{proof}

\begin{remark}[Joint saddle structure of $L_{\mathrm{nat}}$ and explicit dropout time]\label{rem:saddle}
We anticipate here the conjugate-time machinery of
\S\ref{sec:scope} (Definition~\ref{def:conjugate-time} and
Theorems~\ref{thm:short-time}/\ref{thm:conjugate}) to compute the
dropout time for the native oscillator; only the small-amplitude
Jacobi computation is performed in this remark, and the statements
about strict weak local minimality below are direct specializations
of those theorems to $V = k\Jlog$.

$L_{\mathrm{nat}}$ is \emph{convex} in $\dot\xi$ (from
$\cosh\dot\xi$) but \emph{concave} in $\xi$ (from $-\cosh\xi$). It
is therefore a saddle jointly in $(\xi, \dot\xi)$, and the free
global-minimum theorem of Section~\ref{sec:convexity} does
\emph{not} extend to it.

The dropout is quantitative. Linearize \eqref{eq:cosh-sinh} along
the trivial extremal $\xi_*\equiv 0$: with
$L_{\dot\xi\dot\xi}(0,0) = m$ and
$L_{\xi\xi}(0,0) = -k$, the Jacobi equation
\eqref{eq:jacobi} reduces to the harmonic-oscillator equation
\[
  m\,\ddot\eta + k\,\eta = 0,
\]
whose solution with $\eta(a) = 0$ is
$\eta(t) = C\sin\!\bigl(\sqrt{k/m}\,(t-a)\bigr)$. The first
conjugate time of Definition~\ref{def:conjugate-time} along
$\xi_* \equiv 0$ is therefore
\[
  t_c \;=\; a + \pi\sqrt{m/k}
        \;=\; a + \pi/\omega,
\]
exactly half the harmonic period. Specializing
Theorems~\ref{thm:short-time}/\ref{thm:conjugate} to $V = k\Jlog$
gives three regimes:
\begin{itemize}[noitemsep,topsep=2pt]
  \item \emph{Short-time regime, $b - a < \pi/\omega$.} Then
    $b < t_c$, so by Theorem~\ref{thm:short-time}, $\xi_*\equiv 0$
    is a strict weak local minimizer of $\actionL_V$ on $\mathcal V$.
  \item \emph{Long-time regime, $b - a > \pi/\omega$.} Then
    $t_c = a + \pi/\omega \in (a, b)$ is a conjugate point in the
    open interval, so by Theorem~\ref{thm:conjugate},
    $\xi_*\equiv 0$ is not a strict weak local minimizer.
  \item \emph{Boundary case, $b - a = \pi/\omega$.} Then the first
    conjugate time lies exactly at $t_c = b$. This degenerate case
    falls outside Theorems~\ref{thm:short-time}/\ref{thm:conjugate}
    as stated, since Theorem~\ref{thm:short-time} requires the
    strict inequality $b < t_c$ and Theorem~\ref{thm:conjugate}
    requires $c$ in the open interval $(a, b)$. The behaviour at
    this boundary is governed by the leading nonzero higher-order
    term of $\actionL_V$ along the Jacobi field
    $\eta(t) = \sin(\omega(t-a))$, and is resolved by
    Proposition~\ref{prop:boundary-conjugate} below for
    $k\neq m$ (according to the sign of $k-m$); in the pure case
    $k=m$ the action is exactly flat along the null field, so strict
    local minimality fails and non-strict minimality is left open.
\end{itemize}
Thus the native oscillator transitions from local-minimizer status
to non-minimizer at exactly half the harmonic period, recovering the
classical conjugate-point picture for the small-amplitude limit
of \eqref{eq:cosh-sinh}; the degenerate boundary
$b - a = \pi/\omega$ is settled by
Proposition~\ref{prop:boundary-conjugate}.
\end{remark}

\begin{proposition}[Resolution of the boundary conjugate case]\label{prop:boundary-conjugate}
Let $m,k>0$, $\omega=\sqrt{k/m}$, and let $\xi_*\equiv 0$ be the
trivial extremal of the native Lagrangian $L_{\mathrm{nat}}$
(Definition~\ref{def:native}) on $[a,b]$ at the boundary length
$b-a=\pi/\omega$, so that the first conjugate time lies exactly at
$t_c=b$. Let $\mathcal V$ be the class of $C^1$ endpoint-fixed
variations of Theorem~\ref{thm:short-time}. Then:
\begin{enumerate}[label=\textup{(\roman*)},noitemsep,topsep=2pt]
  \item The second variation
    $\delta^2\actionL_V[\eta]=\int_a^b\bigl(m\dot\eta^2-k\eta^2\bigr)\,dt$
    is positive semidefinite on $H^1_0([a,b])$, with one-dimensional
    null space $\mathrm{span}\{\eta_0\}$,
    $\eta_0(t)=\sin(\omega(t-a))$, and is strictly positive on the
    $L^2$-orthogonal complement of $\eta_0$ with spectral gap
    $\lambda_2=3k>0$.
  \item Along the null direction the leading nonvanishing term of the
    action is quartic,
    \[
      \actionL_V[\varepsilon\eta_0]
        \;=\; \frac{\pi\omega}{64}\,(k-m)\,\varepsilon^4
            \;+\; O(\varepsilon^6).
    \]
  \item Hence, at $b-a=\pi/\omega$: if $k>m$ then $\xi_*\equiv 0$ is
    a strict weak local minimizer of $\actionL_V$ on $\mathcal V$; if
    $k<m$ it is \emph{not} a weak local minimizer (the explicit
    family $\varepsilon\eta_0$ strictly lowers the action); and if
    $k=m$ (the pure d'Alembert case of
    Definition~\ref{def:native}) the action is \emph{exactly flat}
    along the entire family $\varepsilon\eta_0$, so $\xi_*\equiv 0$
    is \emph{not a strict} weak local minimizer; whether it is a
    (non-strict) weak local minimizer requires a mixed-direction
    analysis transverse to $\eta_0$ and is not decided by the
    one-parameter expansion along the null field alone.
\end{enumerate}
\end{proposition}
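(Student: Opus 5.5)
The plan is to work directly with the explicit integrand. Write $\actionL_V[\eta]=\int_a^b \bigl(m[\cosh\dot\eta-1]-k[\cosh\eta-1]\bigr)\,dt$ for endpoint-fixed $\eta$, and Taylor-expand both $\cosh$ terms as even power series. Throughout, set $\ell:=b-a=\pi/\omega$ and $s:=t-a$.

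For (i), the quadratic part of $\actionL_V$ is $\tfrac12\int_a^b(m\dot\eta^2-k\eta^2)\,dt$. Expand $\eta\in H^1_0([a,b])$ in the Dirichlet sine basis $e_n(t)=\sin(n\pi s/\ell)=\sin(n\omega s)$. By Parseval,
\[
  \int_a^b (m\dot\eta^2-k\eta^2)\,dt=\tfrac{\ell}{2}\sum_{n\ge1}c_n^2\,(m n^2\omega^2-k)=\tfrac{\ell}{2}\sum_{n\ge1}k(n^2-1)\,c_n^2 .
\]
The coefficient vanishes only for $n=1$, which identifies the null space as $\mathrm{span}\{\eta_0\}$. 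On the $L^2$-orthogonal complement of $\eta_0$ the smallest coefficient is $k(4-1)=3k$, which is the claimed spectral gap $\lambda_2=3k$.

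For (ii), substitute $\eta=\varepsilon\eta_0$. The $\varepsilon^2$ term is $\tfrac{\varepsilon^2}{2}\delta^2\actionL_V[\eta_0]=0$ by (i). The $\varepsilon^4$ term is $\tfrac{\varepsilon^4}{24}\int_a^b(m\dot\eta_0^4-k\eta_0^4)\,dt$. Using $\dot\eta_0=\omega\cos(\omega s)$ and $\int_0^{\pi/\omega}\sin^4=\int_0^{\pi/\omega}\cos^4=\tfrac{3\pi}{8\omega}$, this becomes
\[
  \tfrac{\varepsilon^4}{24}\cdot\tfrac{3\pi}{8\omega}\,(m\omega^4-k).
\]
Since $m\omega^4=k^2/m$, we have $m\omega^4-k=\omega^2(k-m)$, which gives exactly $\tfrac{\pi\omega}{64}(k-m)\varepsilon^4$. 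All remaining terms are even in $\varepsilon$, and the series converges uniformly because $\eta_0$ and $\dot\eta_0$ are bounded, so the remainder is $O(\varepsilon^6)$.

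For (iii), the $k=m$ case uses an exact identity rather than the expansion. Here $\omega=1$ and $\ell=\pi$, so $\actionL_V[\varepsilon\eta_0]=m\int_0^\pi\bigl[\cosh(\varepsilon\cos u)-\cosh(\varepsilon\sin u)\bigr]\,du$. Because $\cosh$ is even and $|\sin|$ is $\pi$-periodic, the shift $u\mapsto u+\pi/2$ carries one integral onto the other, so the action vanishes identically in $\varepsilon$. Hence $\xi_*\equiv0$ is not strict. For $k<m$, part (ii) shows directly that $\varepsilon\eta_0$ lowers the action for small $\varepsilon\neq0$, so $\xi_*\equiv0$ is not a weak local minimizer.

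The main obstacle is strict weak local minimality when $k>m$, since the one-dimensional expansion along $\eta_0$ does not by itself control mixed directions. I would split a general variation as $\eta=\alpha\eta_0+\psi$ with $\psi\perp_{L^2}\eta_0$, and assume $\|\eta\|_{C^1}\le\delta$.

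1. By (i), the quadratic part of the action equals $\tfrac12\delta^2\actionL_V[\psi]\ge c_1\|\psi\|_{H^1}^2$. The gap $3k$ combined with the Friedrichs inequality upgrades the $L^2$ bound to an $H^1$ bound.
2. The quartic part equals $\tfrac{\pi\omega}{64}(k-m)\alpha^4$ plus cross terms of size $O\bigl(|\alpha|^3\|\psi\|_{H^1}+\alpha^2\|\psi\|_{H^1}^2+\delta^2\|\psi\|_{H^1}^2\bigr)$.
3. Young's inequality gives $|\alpha|^3\|\psi\|\le\tfrac{\theta}{2}\alpha^6+\tfrac{1}{2\theta}\|\psi\|^2$, and $\alpha^6\ll\alpha^4$ for small $\alpha$. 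So every cross term is absorbed either into $c_1\|\psi\|^2$ or into $c_2\alpha^4$.
4. The sextic remainder is at most $C\delta^2\int(\dot\eta^4+\eta^4)\,dt\le C\delta^2\bigl(\alpha^4+\|\psi\|_{H^1}^2\bigr)$, which is absorbed in the same way.

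Together these give $\actionL_V[\eta]\ge c\,(\alpha^4+\|\psi\|_{H^1}^2)>0$ for $\eta\neq0$. Most of the work will go into bookkeeping the constants in these absorptions.
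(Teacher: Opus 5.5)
Your proposal is correct and follows essentially the paper's proof: the sine-basis diagonalization for (i), the same quartic coefficient computation for (ii), exact flatness at $k=m$ from a symmetry of $\int_0^\pi$, and, for $k>m$, the splitting $\eta=\alpha\eta_0+\psi$ with $H^1$-coercivity of $\delta^2\actionL_V$ on $\psi$ absorbing all cross and higher-order terms. The only difference is bookkeeping. The paper never expands the quartic; it uses the pointwise inequalities $(x+y)^4\ge(1-\delta)x^4-C_\delta y^4$ and $(x+y)^4\le(1+\delta)x^4+C_\delta y^4$ together with signed one-sided $\cosh$ bounds. You instead expand the quartic and apply Young's inequality to the $|\alpha|^3\|\psi\|_{H^1}$ cross term, and both versions close.
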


\begin{proof}
\emph{(i)} Since $L_{\xi\dot\xi}=0$, $L_{\dot\xi\dot\xi}(0,0)=m$ and
$L_{\xi\xi}(0,0)=-V''(0)=-k$, the accessory quadratic form is
$\delta^2\actionL_V[\eta]=\int_a^b(m\dot\eta^2-k\eta^2)\,dt$. The
Dirichlet Sturm--Liouville operator $-m\,d^2/dt^2-k$ on $[a,b]$ has
eigenpairs $\eta_n(t)=\sin\!\bigl(n\pi(t-a)/(b-a)\bigr)$,
$\lambda_n=mn^2\pi^2/(b-a)^2-k$. At $b-a=\pi/\omega$, i.e.\
$(b-a)^2=\pi^2 m/k$, these become $\lambda_n=(n^2-1)k$, so
$\lambda_1=0$ (eigenfunction $\eta_0=\sin(\omega(t-a))$) and
$\lambda_n=(n^2-1)k\ge 3k>0$ for $n\ge 2$. Thus
$\delta^2\actionL_V\ge 0$, vanishes exactly on
$\mathrm{span}\{\eta_0\}$, and for $\zeta=\sum_{n\ge2}c_n\hat\eta_n$
in the $L^2$-orthogonal complement (with $\hat\eta_n$
$L^2$-orthonormal) satisfies
$\delta^2\actionL_V[\zeta]=\sum_{n\ge2}\lambda_n c_n^2\ge
3k\|\zeta\|_{L^2}^2$. Moreover, since
$\|\dot\zeta\|_{L^2}^2=\sum_{n\ge2}\bigl(n\pi/(b-a)\bigr)^2c_n^2
=\sum_{n\ge2}(n^2k/m)\,c_n^2$ at this length and
$(n^2-1)/n^2\ge\tfrac34$ for $n\ge2$, one has the $H^1$-coercivity
\begin{equation}\label{eq:bdry-coercive}
  \delta^2\actionL_V[\zeta]=\sum_{n\ge2}(n^2-1)k\,c_n^2
    \;\ge\;\tfrac34\,m\,\|\dot\zeta\|_{L^2}^2,
  \qquad \zeta\perp_{L^2}\eta_0.
\end{equation}
Equivalently, the semidefiniteness is the equality case of the
sharp Friedrichs inequality (Corollary~\ref{cor:sobolev-gap}) at
$(b-a)^2=\pi^2 m/k$.

\emph{(ii)} With $\xi=\varepsilon\eta_0$,
$\dot\xi=\varepsilon\dot\eta_0$ and
$\cosh u-1=\tfrac12u^2+\tfrac1{24}u^4+O(u^6)$,
\[
  \actionL_V[\varepsilon\eta_0]
    =\frac{\varepsilon^2}{2}\!\int_a^b\!(m\dot\eta_0^2-k\eta_0^2)\,dt
    +\frac{\varepsilon^4}{24}\!\int_a^b\!(m\dot\eta_0^4-k\eta_0^4)\,dt
    +O(\varepsilon^6).
\]
The $\varepsilon^2$ coefficient is
$\tfrac12\delta^2\actionL_V[\eta_0]=0$ by (i), and the cubic term
vanishes since $\cosh$ is even. With $\dot\eta_0=\omega\cos(\omega(t-a))$,
the substitution $s=\omega(t-a)\in[0,\pi]$ and
$\int_0^\pi\sin^4=\int_0^\pi\cos^4=\tfrac{3\pi}{8}$ give
$\int_a^b\eta_0^4\,dt=\tfrac{3\pi}{8\omega}$ and
$\int_a^b\dot\eta_0^4\,dt=\tfrac{3\pi\omega^3}{8}$, whence
\[
  \frac1{24}\int_a^b(m\dot\eta_0^4-k\eta_0^4)\,dt
  =\frac{\pi}{64}\cdot\frac{m\omega^4-k}{\omega}
  =\frac{\pi\omega}{64}\,(k-m),
\]
using $\omega^2=k/m$ so that $m\omega^4-k=k(\omega^2-1)=k(k-m)/m$ and
$k/(m\omega)=\omega$.

\emph{(iii), case $k<m$.} The quartic coefficient
$\tfrac{\pi\omega}{64}(k-m)$ is negative, so
$\actionL_V[\varepsilon\eta_0]<0=\actionL_V[\xi_*]$ for all small
$\varepsilon\ne0$; the admissible one-parameter family
$\varepsilon\eta_0\in\mathcal V$ strictly lowers the action, hence
$\xi_*\equiv0$ is not a weak local minimizer.

\emph{Case $k>m$.} We prove directly that $\actionL_V[\eta]>0$ for
every $\eta\in C^1_0([a,b])$ with $0<\|\eta\|_{C^1}\le\epsilon$ and a
suitable $\epsilon>0$, which is strict weak local minimality (the
competitor path is $\xi_*+\eta=\eta$). Fix $\rho\in(0,1)$ and assume
$\|\eta\|_\infty\le\rho$. Two pointwise all-order $\cosh$ estimates
hold: from
$\cosh u-1-\tfrac12u^2-\tfrac1{24}u^4=\sum_{j\ge3}u^{2j}/(2j)!\ge0$,
\[
  \cosh\dot\eta-1\;\ge\;\tfrac12\dot\eta^2+\tfrac1{24}\dot\eta^4,
\]
while Proposition~\ref{prop:Kkin}(iv) gives, for $|\eta|\le\rho<1$,
\[
  \cosh\eta-1\;\le\;\tfrac12\eta^2+\frac{\eta^4}{24(1-\rho^2)}.
\]
Substituting both into $\actionL_V$, with
$Q[\eta]:=\delta^2\actionL_V[\eta]=\int_a^b(m\dot\eta^2-k\eta^2)\,dt$,
\begin{equation}\label{eq:bdry-lowerbound}
  \actionL_V[\eta]\;\ge\;\tfrac12 Q[\eta]
    +\frac{m}{24}\!\int_a^b\!\dot\eta^4\,dt
    -\frac{k}{24(1-\rho^2)}\!\int_a^b\!\eta^4\,dt.
\end{equation}

Decompose $\eta=c\,\eta_0+\zeta$ with $\eta_0(t)=\sin(\omega(t-a))$
and $\langle\zeta,\eta_0\rangle_{L^2}=0$; both $\eta_0,\zeta\in
C^1_0([a,b])$. Integrating by parts and using
$m\ddot\eta_0=-m\omega^2\eta_0=-k\eta_0$, the $Q$-cross term
vanishes,
$\int_a^b(m\dot\eta_0\dot\zeta-k\eta_0\zeta)\,dt
=\int_a^b(k\eta_0\zeta-k\eta_0\zeta)\,dt=0$, so
$Q[\eta]=Q[\zeta]\ge\tfrac34 m\|\dot\zeta\|_{L^2}^2$ by
\eqref{eq:bdry-coercive}. The splitting is bounded in $C^1$: as
$\|\eta_0\|_{L^2}^2=(b-a)/2$, the coefficient
$c=\langle\eta,\eta_0\rangle_{L^2}/\|\eta_0\|_{L^2}^2$ obeys
$|c|\le\sqrt2\,\|\eta\|_\infty$, whence
$\|\zeta\|_\infty\le(1+\sqrt2)\|\eta\|_\infty$ and
$\|\dot\zeta\|_\infty\le(1+\sqrt2\,\omega)\|\eta\|_{C^1}$.

For the quartics use, for any fixed $\delta\in(0,1)$, the elementary
Young inequalities
$(x+y)^4\ge(1-\delta)x^4-C_\delta y^4$ and
$(x+y)^4\le(1+\delta)x^4+C_\delta y^4$ (with $C_\delta$ depending
only on $\delta$), applied pointwise with
$(x,y)=(c\dot\eta_0,\dot\zeta)$ and $(x,y)=(c\eta_0,\zeta)$ and
integrated:
\[
  \int_a^b\!\dot\eta^4\,dt\ge(1-\delta)c^4\!\int_a^b\!\dot\eta_0^4\,dt
    -C_\delta\!\int_a^b\!\dot\zeta^4\,dt,
  \quad
  \int_a^b\!\eta^4\,dt\le(1+\delta)c^4\!\int_a^b\!\eta_0^4\,dt
    +C_\delta\!\int_a^b\!\zeta^4\,dt.
\]
Inserting these into \eqref{eq:bdry-lowerbound} and using the values
$\tfrac{m}{24}\int_a^b\dot\eta_0^4\,dt=\tfrac{k\pi\omega}{64}$ and
$\tfrac{k}{24}\int_a^b\eta_0^4\,dt=\tfrac{k\pi}{64\omega}$ from (ii)
(recall $m\omega^3=k\omega$), the $c^4$ contribution is
$c^4\,\mu(\delta,\rho)$ with
\[
  \mu(\delta,\rho)
    :=(1-\delta)\frac{k\pi\omega}{64}
      -\frac{1+\delta}{1-\rho^2}\,\frac{k\pi}{64\omega}
    \;\xrightarrow[(\delta,\rho)\to(0,0)]{}\;
      \frac{k\pi}{64}\Bigl(\omega-\tfrac1\omega\Bigr)
      =\frac{\pi\omega}{64}(k-m)>0.
\]
Fix $\delta,\rho$ small enough that
$\mu(\delta,\rho)\ge\tfrac{\pi\omega}{128}(k-m)=:\mu_0>0$. The
remaining $\zeta$-dependent terms are
\[
  \tfrac12 Q[\zeta]
    -\frac{mC_\delta}{24}\!\int_a^b\!\dot\zeta^4\,dt
    -\frac{kC_\delta}{24(1-\rho^2)}\!\int_a^b\!\zeta^4\,dt.
\]
By $\int\dot\zeta^4\le\|\dot\zeta\|_\infty^2\|\dot\zeta\|_{L^2}^2$,
$\int\zeta^4\le\|\zeta\|_\infty^2 C_P^2\|\dot\zeta\|_{L^2}^2$
(Poincar\'e $\|\zeta\|_{L^2}\le C_P\|\dot\zeta\|_{L^2}$), and the
$C^1$ bounds on $\zeta$ above, both negative terms are
$\le C_*\,\|\eta\|_{C^1}^2\,\|\dot\zeta\|_{L^2}^2$ for a constant
$C_*=C_*(\delta,\rho,b-a,\omega,m,k)$. With
$\tfrac12 Q[\zeta]\ge\tfrac38 m\|\dot\zeta\|_{L^2}^2$, the
$\zeta$-terms are
$\ge(\tfrac38 m-C_*\|\eta\|_{C^1}^2)\|\dot\zeta\|_{L^2}^2$. Choosing
$\epsilon\le\rho$ with $C_*\epsilon^2\le\tfrac3{16}m$ gives, for all
$\|\eta\|_{C^1}\le\epsilon$,
\[
  \actionL_V[\eta]\;\ge\;\mu_0\,c^4
    +\tfrac{3}{16}m\,\|\dot\zeta\|_{L^2}^2\;\ge\;0,
\]
with equality only if $c=0$ and $\dot\zeta\equiv0$, i.e.\ (since
$\zeta\in H^1_0$) $\zeta=0$ and so $\eta=0$. Hence
$\actionL_V[\eta]>0=\actionL_V[\xi_*]$ for every
$\eta\in\mathcal V\setminus\{0\}$ with $\|\eta\|_{C^1}\le\epsilon$,
and $\xi_*\equiv0$ is a strict weak local minimizer.

\emph{Case $k=m$.} Here $\omega=1$, the interval has length
$\pi$, and $\eta_0(t)=\sin(t-a)$. Along the one-parameter family
$\xi=\varepsilon\eta_0$ the action is \emph{exactly flat for every
$\varepsilon$}, not merely quartically degenerate: with
$L_{\mathrm{nat}}=m[\cosh\dot\xi-\cosh\xi]$ and the substitution
$s=t-a\in[0,\pi]$,
\[
  \actionL_V[\varepsilon\eta_0]
    = m\int_0^\pi\bigl[\cosh(\varepsilon\cos s)
        - \cosh(\varepsilon\sin s)\bigr]\,ds
    = 0,
\]
because $\int_0^\pi f(\cos s)\,ds=\int_0^\pi f(\sin s)\,ds$ for
every even $f$ (both equal $2\int_0^{\pi/2} f(\sin s)\,ds$),
applied to $f=\cosh(\varepsilon\,\cdot)$. Since
$\varepsilon\eta_0\in\mathcal V$ is admissible and arbitrarily
small in $C^1$ with $\actionL_V[\varepsilon\eta_0]=0
=\actionL_V[\xi_*]$, the strict inequality required for strict weak
local minimality fails. The exact flatness also shows that no
finite-order expansion along $\eta_0$ can decide the question:
whether $\xi_*\equiv 0$ is a non-strict weak local minimizer
depends on the behaviour of $\actionL_V$ in directions transverse
to $\eta_0$ and is left open here.
\end{proof}

\subsection{Newton's law from the general cosh Lagrangian}\label{subsec:general}

\begin{definition}[Cosh Lagrangian with general potential]
Given a potential $V : \R \to \R$ of class $C^1$ (either the
native $V = k\Jlog$ or an external input for general interactions),
and mass coupling $m > 0$, the \emph{cosh Lagrangian} is
\[
  \actionL(\xi, \dot\xi) \;:=\; \Kkin_m(\dot\xi) - V(\xi)
     \;=\; m\bigl[\cosh(\dot\xi) - 1\bigr] - V(\xi),
\]
with associated action
$\actionL_V[\gamma] := \int_a^b \actionL(\xi(t), \dot\xi(t))\,dt$
for admissible paths $\gamma$ with $\xi=\log\gamma$ for which this
integral is defined.
\end{definition}

\begin{theorem}[Newton's second law from the cosh Lagrangian]\label{thm:newton-cosh}
Let $a<b$, $m>0$, and $V\in C^1(\R)$. For $C^2$ curves
$\xi:[a,b]\to\R$, the
Euler--Lagrange equation of $\actionL$ is
\begin{equation}\label{eq:newton-cosh}
  m\cosh(\dot\xi(t))\,\ddot\xi(t) + V'(\xi(t)) \;=\; 0.
\end{equation}
In the small-velocity limit $\cosh(\dot\xi) \to 1$, this reduces to
\[
  m\,\ddot\xi(t) \;=\; -V'(\xi(t)),
\]
Newton's second law with mass $m$ and force $F = -V'$. Under the
kinematic embedding $q := \xi$, this is the standard form
$m\ddot q = -V'(q)$.
\end{theorem}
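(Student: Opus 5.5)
The plan is to compute the Euler--Lagrange equation directly, following the template already used for Theorem~\ref{thm:kinetic-EL} and Theorem~\ref{thm:native-EL}. For a $C^2$ curve $\xi$ and an endpoint-fixed variation $\eta\in C^1([a,b])$ with $\eta(a)=\eta(b)=0$, consider $\varepsilon\mapsto\int_a^b\actionL(\xi+\varepsilon\eta,\dot\xi+\varepsilon\dot\eta)\,dt$.

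\textbf{Step 1: differentiate under the integral.} This is justified exactly as in Theorem~\ref{thm:kinetic-EL}. The functions $\xi,\dot\xi,\eta,\dot\eta$ are continuous on the compact interval $[a,b]$, so $\xi+\varepsilon\eta$ and $\dot\xi+\varepsilon\dot\eta$ stay in compact ranges for $|\varepsilon|\le1$. On those ranges the partial derivatives $\partial_\xi\actionL=-V'(\xi)$ and $\partial_{\dot\xi}\actionL=m\sinh\dot\xi$ are continuous, hence bounded. The only hypothesis used on $V$ is $V\in C^1$. The first variation is therefore
\[
  \delta\actionL_V[\xi](\eta)=\int_a^b\bigl[m\sinh(\dot\xi)\,\dot\eta-V'(\xi)\,\eta\bigr]\,dt .
\]

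\textbf{Step 2: integrate by parts and apply the fundamental lemma.} Since $\xi\in C^2$, the function $m\sinh(\dot\xi)$ is $C^1$ with derivative $m\cosh(\dot\xi)\ddot\xi$. The boundary term vanishes because $\eta(a)=\eta(b)=0$, which gives
\[
  \delta\actionL_V[\xi](\eta)=-\int_a^b\bigl[m\cosh(\dot\xi)\ddot\xi+V'(\xi)\bigr]\eta\,dt .
\]
The bracket is continuous. The fundamental lemma of the calculus of variations then shows that the first variation vanishes for all such $\eta$ if and only if \eqref{eq:newton-cosh} holds pointwise on $[a,b]$. This is the same equation as \eqref{eq:cosh-sinh} with $k\sinh\xi$ replaced by $V'(\xi)$, which serves as a consistency check.

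\textbf{Step 3: small-velocity limit.} Write $\cosh(\dot\xi)=1+(\cosh\dot\xi-1)$. Proposition~\ref{prop:Kkin} (via $\cosh v-1\le v^2\cosh v/2$, or the Taylor bound) gives $0\le\cosh\dot\xi-1=O(\dot\xi^2)$. Equation \eqref{eq:newton-cosh} therefore reads
\[
  m\ddot\xi=-V'(\xi)-m(\cosh\dot\xi-1)\ddot\xi .
\]
The correction is relatively $O(\dot\xi^2)$ compared with the left side, and dividing by $\cosh\dot\xi\ge1$ shows $m\ddot\xi=-V'(\xi)/\cosh\dot\xi$. In the limit $\cosh\dot\xi\to1$ this becomes $m\ddot\xi=-V'(\xi)$. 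Substituting the kinematic embedding $q=\xi$ of Definition~\ref{def:embedding} (natural units, dots as $\tau$-derivatives) yields $m\ddot q=-V'(q)$.

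\textbf{Main obstacle.} There is no genuine obstacle; the only point needing care is the justification of differentiation under the integral with merely $V\in C^1$. That is handled by the compactness argument in Step 1. I would also note explicitly that the reduction in Step 3 is a formal leading-order statement, not a uniform convergence theorem.
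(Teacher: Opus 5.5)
Your proposal is correct and follows the paper's route. The paper simply computes $\partial_{\dot\xi}\actionL = m\sinh\dot\xi$, $\frac{d}{dt}\partial_{\dot\xi}\actionL = m\cosh(\dot\xi)\ddot\xi$ and $\partial_\xi\actionL = -V'(\xi)$, substitutes them into the Euler--Lagrange formula, and treats the small-velocity limit as immediate; your Steps 1--2 only make explicit the standard first-variation derivation of that formula (as the paper does for Theorem~\ref{thm:kinetic-EL}), which adds rigor without changing the argument.
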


\begin{proof}
$\partial_{\dot\xi}\actionL = m\sinh(\dot\xi)$, so
$(d/dt)\partial_{\dot\xi}\actionL = m\cosh(\dot\xi)\ddot\xi$.
$\partial_\xi\actionL = -V'(\xi)$.
The Euler--Lagrange equation
$(d/dt)\partial_{\dot\xi}\actionL - \partial_\xi\actionL = 0$ gives
$m\cosh(\dot\xi)\ddot\xi - (-V'(\xi)) = 0$, i.e.,
\eqref{eq:newton-cosh}. The small-velocity limit is immediate.
\end{proof}

\begin{remark}[Newton's first law: inertia]\label{rem:inertia}
For $V'\equiv 0$, equation~\eqref{eq:newton-cosh} reduces to
$\ddot\xi = 0$, i.e., uniform log-velocity motion. This is the
mechanical reading, under the kinematic embedding $q=\xi$, of the
variational statement Theorem~\ref{thm:kinetic-EL}; it is also
equivalent to the conservation of the cosh momentum
$p = m\sinh\dot\xi$ in
Corollary~\ref{thm:momentum} below, via $\dot p = m\cosh(\dot\xi)\ddot\xi$
and $\cosh > 0$. We single it out by name only because the bridge
to Newtonian mechanics motivates labeling, not because it is a
separate fact.
\end{remark}

\section{Hamiltonian Formulation and Conservation Laws}\label{sec:hamilton}

Having established the cosh Lagrangian in Section~\ref{sec:newton},
we now record its Hamiltonian dual and the associated conservation
laws. The Hamiltonian $H = T_H(p) + V(\xi)$ is the foundationally
primary object in the bridge construction
(Definition~\ref{def:additive}): the additively combined total cost
from which the Lagrangian is Legendre-derived. We collect below the
mass-coupled Hamilton equations, the small-momentum expansion, and
energy conservation.

\begin{definition}[Cosh momentum and Hamiltonian, mass-coupled]
Let $m>0$ and let $V:\R\to\R$. For
$\actionL(\xi, \dot\xi) = \Kkin_m(\dot\xi) - V(\xi) =
m[\cosh(\dot\xi) - 1] - V(\xi)$, the \emph{conjugate momentum} is
\[
  p \;:=\; \frac{\partial\actionL}{\partial\dot\xi}
     \;=\; m\sinh(\dot\xi),
  \qquad
  \dot\xi \;=\; \operatorname{arsinh}\!\left(\frac{p}{m}\right),
\]
and the \emph{Hamiltonian} is
\[
  H(\xi, p) \;:=\; p\,\dot\xi - \actionL
    \;=\; T_H(p) + V(\xi)
    \;=\; p\,\operatorname{arsinh}\!\left(\frac{p}{m}\right)
        - \sqrt{m^2 + p^2} + m + V(\xi),
\]
with $T_H$ as in Proposition~\ref{prop:TH} and using the identity
$\cosh(\operatorname{arsinh}(u)) = \sqrt{1 + u^2}$.
\end{definition}

\begin{proposition}[Small-momentum limit of $H$]\label{prop:H-smallp}
Let $m>0$ and $V:\R\to\R$. As $|p|/m \to 0$,
\[
  H(\xi, p) \;=\; \frac{p^2}{2m} + V(\xi)
    \;-\; \frac{p^4}{24\,m^3} + O(p^6/m^5).
\]
Under the kinematic embedding $q := \xi$, this is the standard
non-relativistic Hamiltonian $H = p^2/(2m) + V(q)$ plus a
species-scale-suppressed quartic correction.
\end{proposition}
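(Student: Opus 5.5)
The plan is to reduce the statement to the kinetic expansion already established in Proposition~\ref{prop:TH}. By definition, $H(\xi,p)=T_H(p)+V(\xi)$, and the potential term $V(\xi)$ has no $p$-dependence. It therefore passes through the small-$|p|/m$ limit unchanged, for every fixed $\xi$, with no regularity assumption on $V$ and with a remainder uniform in $\xi$. So the whole content of the proposition is the expansion of $T_H(p)$.

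Proposition~\ref{prop:TH} already gives
\[
T_H(p)=\frac{p^2}{2m}-\frac{p^4}{24\,m^3}+O(p^6/m^5).
\]
Adding $V(\xi)$ to both sides yields exactly the displayed expansion of $H(\xi,p)$.

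To be careful about the meaning of the $O$-term, I would rewrite it in scale-free form. Setting $u=p/m$ gives
\[
T_H(p)=m\,g(u),\qquad g(u)=u\operatorname{arsinh}u-\sqrt{1+u^2}+1.
\]
The function $g$ is real-analytic near $0$ and even, with Taylor series $u^2/2-u^4/24+O(u^6)$. Multiplying by $m$ gives the remainder $m\,O(u^6)=O(p^6/m^5)$, which is uniform in $m$ and in $\xi$. This step is where one would check the coefficients: the expansions $\operatorname{arsinh}u=u-u^3/6+O(u^5)$ and $\sqrt{1+u^2}=1+u^2/2-u^4/8+O(u^6)$ give the $u^4$ coefficient $-1/6+1/8=-1/24$. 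This matches the earlier computation.

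For the interpretive sentence, I would apply the kinematic embedding $q:=\xi$ of Definition~\ref{def:embedding}. This rewrites the leading part as $p^2/(2m)+V(q)$, the standard non-relativistic Hamiltonian. The correction $-p^4/(24m^3)$ is suppressed relative to the leading term by the factor $p^2/(12m^2)$, which is small precisely when $|p|/m\to0$; this is the sense of ``species-scale-suppressed.''

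There is no real obstacle here. The only point needing care is stating that the asymptotic is in the single variable $p/m$, uniformly in $\xi$, since $V$ is arbitrary.
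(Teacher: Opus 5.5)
Your proposal is correct and matches the paper's proof, which is exactly to specialize Proposition~\ref{prop:TH} and add the $p$-independent term $V(\xi)$. Your rescaling $T_H(p)=m\,g(p/m)$ is a useful extra that makes explicit that the $O(p^6/m^5)$ remainder is uniform in $m$ and $\xi$, but the argument does not need it.
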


\begin{proof}
Specialize Proposition~\ref{prop:TH} and add $V(\xi)$.
\end{proof}

\begin{theorem}[Hamilton's equations from EL]\label{thm:hamilton}
Let $a<b$, $m>0$, and let $V\in C^1(\R)$. Let
\[
  H(\xi,p)=p\,\operatorname{arsinh}(p/m)-\sqrt{m^2+p^2}+m+V(\xi).
\]
If $\xi\in C^2([a,b])$ satisfies \eqref{eq:newton-cosh} and
$p(t)=m\sinh(\dot\xi(t))$, then Hamilton's equations hold:
\[
  \dot\xi \;=\; \partial_p H \;=\; \operatorname{arsinh}\!\left(\frac{p}{m}\right),
  \qquad
  \dot p \;=\; -\partial_\xi H \;=\; -V'(\xi).
\]
\end{theorem}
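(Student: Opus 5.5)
The plan is to verify the two Hamilton equations separately by direct computation, using only the Legendre structure recorded in Proposition~\ref{prop:TH} and the Euler--Lagrange equation \eqref{eq:newton-cosh}. No variational argument is needed, because the hypothesis already hands us a $C^2$ solution. Set $p(t):=m\sinh(\dot\xi(t))$. Since $\xi\in C^2$, $\dot\xi$ is $C^1$, so $p$ is $C^1$ on $[a,b]$ and both sides of each equation make sense pointwise.

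\emph{First equation.} Compute $\partial_p H$ from the closed form. Using $(d/dp)\operatorname{arsinh}(p/m)=1/\sqrt{m^2+p^2}$, we get
\[
  \partial_p H=\operatorname{arsinh}(p/m)+\frac{p}{\sqrt{m^2+p^2}}-\frac{p}{\sqrt{m^2+p^2}}=\operatorname{arsinh}(p/m).
\]
This is the same cancellation already used for $f'$ in the proof of Proposition~\ref{prop:not-SR}. Evaluating at $p=m\sinh\dot\xi$ and using that $\operatorname{arsinh}$ inverts $\sinh$ on $\R$ gives $\partial_pH(\xi(t),p(t))=\dot\xi(t)$. This can alternatively be read off as the standard Legendre identity $T_H'(p)=\dot\xi(p)$ for the strictly convex $\Kkin_m$, but the explicit derivative is cleaner and avoids any appeal to general Legendre theory.

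\emph{Second equation.} Since $V\in C^1$, we have $\partial_\xi H=V'(\xi)$; the kinetic part does not depend on $\xi$. Differentiating $p=m\sinh\dot\xi$ by the chain rule, valid because $\dot\xi\in C^1$, gives $\dot p=m\cosh(\dot\xi)\ddot\xi$. By \eqref{eq:newton-cosh} this equals $-V'(\xi)=-\partial_\xi H$. Equivalently, this step is just the Euler--Lagrange equation written as $\frac{d}{dt}\partial_{\dot\xi}\actionL=\partial_\xi\actionL$ with $\partial_{\dot\xi}\actionL=p$.

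There is no genuine obstacle here. The only point requiring care is the regularity bookkeeping: $C^2$ for $\xi$ is what makes $p$ differentiable, and $V\in C^1$ is what makes $\partial_\xi H$ defined. The derivative cancellation in $\partial_pH$ is the one step worth writing out explicitly.
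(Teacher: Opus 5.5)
Your proposal is correct and follows the paper's proof: the first equation comes from the inverse Legendre relation $p=m\sinh\dot\xi \iff \dot\xi=\operatorname{arsinh}(p/m)$, and the second from differentiating $p=m\sinh\dot\xi$ and substituting \eqref{eq:newton-cosh}. Your explicit computation of $\partial_p H$, showing that the two $p/\sqrt{m^2+p^2}$ terms cancel, spells out a step the paper leaves implicit.
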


\begin{proof}
The first equation is the inverse Legendre transform
($p = m\sinh(\dot\xi) \iff \dot\xi = \operatorname{arsinh}(p/m)$).
The second follows from \eqref{eq:newton-cosh}:
$\dot p = (d/dt)[m\sinh(\dot\xi)] = m\cosh(\dot\xi)\ddot\xi =
-V'(\xi)$.
\end{proof}

\subsection{Conservation laws from one-parameter symmetries}\label{sec:noether}

Two standard conservation laws we use elsewhere in this work --
energy and the cosh momentum -- arise as Noether charges of
explicit one-parameter symmetries of the action. We take the
Noether derivation as primary; energy and momentum conservation
are then immediate corollaries. This presentation also makes
visible why the cosh momentum is \emph{not} conserved for the
native cosh--sinh oscillator.

\begin{theorem}[Noether charges of the cosh Lagrangian]\label{thm:noether-explicit}
Let $L(\xi,\dot\xi) = m\bigl[\cosh(\dot\xi) - 1\bigr] - V(\xi)$
with $m > 0$ and $V \in C^1(\R)$.

\smallskip
\noindent\textbf{(a) Time-translation symmetry.} The one-parameter
group $\Phi^\alpha_t : (t,\xi) \mapsto (t + \alpha, \xi)$ leaves
$L$ invariant for every $\alpha \in \R$, since $L$ has no explicit
$t$-dependence. The associated Noether charge is the energy
\[
  E \;=\; \dot\xi\,\partial_{\dot\xi}L \;-\; L
    \;=\; m\bigl[\dot\xi\sinh(\dot\xi) - \cosh(\dot\xi) + 1\bigr]
        + V(\xi),
\]
and it is conserved on every $C^2$ solution of
\eqref{eq:newton-cosh}.

\smallskip
\noindent\textbf{(b) $\xi$-translation symmetry, free case.} If
$V$ is constant on $\R$, the one-parameter group
$\Phi^\alpha_\xi : (t,\xi) \mapsto (t, \xi + \alpha)$ leaves $L$
invariant. The associated Noether charge is the cosh momentum
\(
  p \;=\; \partial_{\dot\xi}L \;=\; m\sinh(\dot\xi),
\)
conserved on every $C^2$ solution of \eqref{eq:newton-cosh}.

\smallskip
\noindent\textbf{(c) Native oscillator: no $\xi$-translation
symmetry.} For $V = k\Jlog$ with $k > 0$, the native potential
satisfies $\Jlog(\xi + \alpha) = \cosh(\xi+\alpha) - 1
\not\equiv \Jlog(\xi)$ whenever $\alpha \ne 0$. Hence $\Phi^\alpha_\xi$
is \emph{not} a Noether symmetry of $L_{\mathrm{nat}}$, and the cosh
momentum is not conserved along solutions of \eqref{eq:cosh-sinh};
only the energy of Corollary~\ref{cor:native-energy} is.
\end{theorem}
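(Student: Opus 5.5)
The plan is a direct computation along $C^2$ solutions, one part at a time. Both (a) and (b) follow the same template. I first identify the Noether charge from the general formula: for a symmetry generated by $(\delta t,\delta\xi)$, the charge is $\partial_{\dot\xi}L\,(\delta\xi-\dot\xi\,\delta t)+L\,\delta t$, up to an overall sign. I then verify conservation by differentiating the charge along a solution and substituting \eqref{eq:newton-cosh}. I will rely on this verification rather than on an abstract appeal to Noether's theorem, so the argument needs only the regularity $\xi\in C^2$ and $V\in C^1$ assumed in the statement.

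For (a), invariance under $t\mapsto t+\alpha$ holds because $L$ has no explicit $t$-dependence. Taking $\delta t=1$ and $\delta\xi=0$, the general formula produces $E=\dot\xi\,\partial_{\dot\xi}L-L$. With $\partial_{\dot\xi}L=m\sinh\dot\xi$, this gives the displayed expression $m[\dot\xi\sinh\dot\xi-\cosh\dot\xi+1]+V(\xi)$. To check conservation, I differentiate along the solution:
\[
\dot E=m\bigl[\ddot\xi\sinh\dot\xi+\dot\xi\cosh\dot\xi\,\ddot\xi-\sinh\dot\xi\,\ddot\xi\bigr]+V'(\xi)\dot\xi=\dot\xi\bigl[m\cosh\dot\xi\,\ddot\xi+V'(\xi)\bigr]=0 .
\]
This is exactly the cancellation already carried out in Corollary~\ref{cor:native-energy}, now with a general $V$ in place of $k\Jlog$. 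The chain rule applies to $V(\xi(t))$ because $V\in C^1$ and $\xi\in C^1$.

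For (b), if $V$ is constant, then $L(\xi+\alpha,\dot\xi)=L(\xi,\dot\xi)$. Taking $\delta\xi=1$ and $\delta t=0$, the charge is $p=\partial_{\dot\xi}L=m\sinh\dot\xi$. Along a solution, $\dot p=m\cosh\dot\xi\,\ddot\xi=-V'(\xi)=0$. This is the same computation as Theorem~\ref{thm:hamilton} and Remark~\ref{rem:inertia}.

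For (c), I must show two things. First, $\Phi^\alpha_\xi$ fails to be a symmetry: for $\alpha\neq0$, the function $\cosh(\xi+\alpha)-\cosh\xi=2\sinh(\xi+\alpha/2)\sinh(\alpha/2)$ is not identically zero, so $L_{\mathrm{nat}}$ is not invariant.

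Second, I must show that $p$ is genuinely not conserved, which is the main obstacle. Loss of a symmetry does not by itself rule out conservation, so I will exhibit an explicit solution on which $p$ varies. By Theorem~\ref{thm:hamilton}, $\dot p=-k\sinh\xi$ along any solution. Hence $p$ is constant on a solution only if $\sinh\xi\equiv0$, that is, only if $\xi\equiv0$. So it suffices to produce one $C^2$ solution of \eqref{eq:cosh-sinh} with $\xi(t_0)\neq0$. I will take initial data $\xi(t_0)=\xi_0\neq0$ and $\dot\xi(t_0)=0$, and rewrite the equation as $\ddot\xi=-(k/m)\sinh\xi/\cosh\dot\xi$. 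This is a smooth first-order system, so Picard--Lindel\"of gives a local $C^2$ solution, and at $t_0$ it satisfies $\dot p(t_0)=-k\sinh\xi_0\neq0$. Therefore $p$ is not conserved, while $E$ remains conserved by part (a) with $V=k\Jlog$, which reproduces Corollary~\ref{cor:native-energy}.
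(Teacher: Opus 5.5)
Your parts (a) and (b) coincide with the paper's proof. Both identify the charge and then verify conservation by differentiating along a $C^2$ solution and substituting \eqref{eq:newton-cosh}, with the same cancellation $\dot E=\dot\xi\,[m\cosh\dot\xi\,\ddot\xi+V'(\xi)]=0$ and $\dot p=-V'(\xi)=0$.

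Part (c) is where you diverge, in two ways.

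First, the paper reads ``Noether symmetry'' in the Bessel-Hagen (quasi-invariance) sense. It rules out $\delta L_{\mathrm{nat}}=-k\sinh\xi$ being a total derivative $dF/dt$ of any $F(t,\xi,\dot\xi)$ by matching the coefficients of $\ddot\xi$ and $\dot\xi$. You only show that $L_{\mathrm{nat}}$ is not strictly invariant, via $\cosh(\xi+\alpha)-\cosh\xi=2\sinh(\xi+\alpha/2)\sinh(\alpha/2)\not\equiv0$. That matches the ``hence'' reasoning in the statement itself, but it does not exclude a divergence symmetry. Your proof that $p$ is not conserved does not close this either, because a divergence symmetry would conserve $p-F$ rather than $p$. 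If you want the paper's stronger conclusion, add one observation: a nonconstant function of $\xi$ alone cannot equal $F_t+F_\xi\dot\xi+F_{\dot\xi}\ddot\xi$ identically.

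Second, your route is more careful than the paper's on the non-conservation claim. The paper asserts that $\dot p=-k\sinh\xi$ ``vanishes identically only at the trivial extremal'' and concludes from this. That step tacitly assumes a nontrivial solution exists. Your Picard--Lindel\"of construction with $\xi(t_0)=\xi_0\neq0$, $\dot\xi(t_0)=0$ supplies one: a $C^2$ solution with $\dot p(t_0)\neq0$.
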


\begin{proof}
\emph{(a)} For an autonomous first-order Lagrangian, the Noether
identity for time-translation gives $dE/dt = \partial_t L = 0$
along solutions. Explicitly, with
$E = m[\dot\xi\sinh(\dot\xi) - \cosh(\dot\xi) + 1] + V(\xi)$,
\[
  \dot E
    = m\bigl[\ddot\xi\sinh(\dot\xi) + \dot\xi\cosh(\dot\xi)\ddot\xi
              - \sinh(\dot\xi)\ddot\xi\bigr] + V'(\xi)\dot\xi
    = \dot\xi\bigl[m\cosh(\dot\xi)\ddot\xi + V'(\xi)\bigr]
    = 0
\]
by \eqref{eq:newton-cosh}.

\emph{(b)} The infinitesimal generator is
$(\delta t, \delta\xi) = (0, 1)$. Under this generator,
$\delta L = \partial_\xi L \cdot 1 = -V'(\xi)$, which vanishes
identically iff $V$ is constant. The Noether charge is
$\partial_{\dot\xi}L \cdot \delta\xi = m\sinh(\dot\xi)$, and
direct differentiation gives
\[
  \dot p \;=\; m\cosh(\dot\xi)\ddot\xi
        \;=\; -V'(\xi) \;=\; 0
\]
by \eqref{eq:newton-cosh} and $V'\equiv 0$.

\emph{(c)} We use the \emph{quasi-invariance} (Bessel-Hagen) form of
Noether's theorem~\cite{noether1918,besselhagen1921,olver1993}:
$\Phi^\alpha_\xi$ is a Noether symmetry of
$L_{\mathrm{nat}}$ iff there exists a function $F=F(t,\xi,\dot\xi)$
of class $C^1$ such that
$\delta L_{\mathrm{nat}} = dF/dt$ along arbitrary $C^1$ curves, not
merely along solutions. Here
$\delta L_{\mathrm{nat}} = \partial_\xi L_{\mathrm{nat}}\cdot 1
= -k\sinh(\xi)$, a function of $\xi$ alone. We show that
$-k\sinh(\xi)$ is not a total $t$-derivative of any
$F(t,\xi,\dot\xi)$.

Suppose for contradiction that
$-k\sinh(\xi) = dF/dt = \partial_t F + F_\xi\,\dot\xi + F_{\dot\xi}\,\ddot\xi$
identically in $(t,\xi,\dot\xi,\ddot\xi)$. The left-hand side is
independent of $\dot\xi$ and $\ddot\xi$, so equating coefficients of
$\ddot\xi$ and of $\dot\xi$ gives $F_{\dot\xi}\equiv 0$ and
$F_\xi\equiv 0$, hence $F=F(t)$. Then
$dF/dt = F'(t)$ depends only on $t$, but $-k\sinh(\xi)$ depends
nontrivially on $\xi$ (since $k>0$), contradiction. Hence no such
$F$ exists, $\Phi^\alpha_\xi$ is not a quasi-invariance of
$L_{\mathrm{nat}}$, and Noether's theorem produces no conserved
charge from $\xi$-translation for the native potential.

\emph{Direct verification of non-conservation.} Along any solution
of \eqref{eq:cosh-sinh},
$\dot p = m\cosh(\dot\xi)\ddot\xi = -k\sinh(\xi)$, which vanishes
identically only at the trivial extremal $\xi \equiv 0$. So even
without the symmetry analysis, $p$ is manifestly not conserved.
\end{proof}

\begin{corollary}[Energy conservation]\label{thm:energy}
Let $a<b$, $m>0$, $V\in C^1(\R)$, and let $H$ be as in
Theorem~\ref{thm:hamilton}. If $\xi\in C^2([a,b])$ satisfies
\eqref{eq:newton-cosh} and $p(t)=m\sinh(\dot\xi(t))$, then the
total energy $E(t):=H(\xi(t),p(t))$ is constant on $[a,b]$.
\end{corollary}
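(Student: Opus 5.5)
The plan is to reduce the statement to the energy-charge computation already done in Theorem~\ref{thm:noether-explicit}(a), by showing that $H(\xi(t),p(t))$ coincides pointwise with the Noether energy $E=\dot\xi\,\partial_{\dot\xi}L-L$ along the trajectory.

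First, substitute $p(t)=m\sinh(\dot\xi(t))$ into $H$. Since $\operatorname{arsinh}(\sinh u)=u$, the term $p\,\operatorname{arsinh}(p/m)$ becomes $m\,\dot\xi\sinh\dot\xi$. Using $\cosh(\operatorname{arsinh}u)=\sqrt{1+u^2}$ as in Proposition~\ref{prop:TH}, we get $\sqrt{m^2+p^2}=m\cosh\dot\xi$. Hence
\[
  H(\xi(t),p(t)) \;=\; m\bigl[\dot\xi\sinh\dot\xi-\cosh\dot\xi+1\bigr]+V(\xi),
\]
which is exactly the charge $E$ of Theorem~\ref{thm:noether-explicit}(a).

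Next, we differentiate. Because $\xi\in C^2$ and $V\in C^1$, the function $t\mapsto E(t)$ is $C^1$. The derivative was computed in the proof of Theorem~\ref{thm:noether-explicit}(a):
\[
  \dot E=\dot\xi\bigl[m\cosh(\dot\xi)\ddot\xi+V'(\xi)\bigr].
\]
This vanishes identically by \eqref{eq:newton-cosh}, so $E$ is constant on $[a,b]$ by the mean value theorem.

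An alternative route goes through Theorem~\ref{thm:hamilton}. There, $\frac{d}{dt}H(\xi,p)=\partial_\xi H\,\dot\xi+\partial_p H\,\dot p$, and this cancels once Hamilton's equations are substituted. That route would require checking that $p(t)$ is $C^1$, which holds since $\dot\xi\in C^1$.

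The only point needing care, and it is mild, is the regularity justification for the chain rule. I expect no real obstacle: the statement is a direct corollary of the explicit energy computation already in hand.
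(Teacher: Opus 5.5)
Your proposal is correct and matches the paper's proof, which simply cites the energy computation of Theorem~\ref{thm:noether-explicit}(a). Your explicit substitution $p=m\sinh\dot\xi$, showing that $H(\xi(t),p(t))$ equals the Noether energy $E$, fills in the identification that the paper's one-line proof leaves implicit; the paper writes that step out only in the proof of Corollary~\ref{cor:native-energy}.
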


\begin{proof}
Theorem~\ref{thm:noether-explicit}(a).
\end{proof}

\begin{corollary}[Momentum conservation in the free case]\label{thm:momentum}
Let $a<b$, $m>0$, and let $V'\equiv 0$ on $\R$. If
$\xi\in C^2([a,b])$ satisfies \eqref{eq:newton-cosh}, then the
cosh momentum $p(t) := m\sinh(\dot\xi(t))$ is constant on $[a,b]$.
Equivalently, $\ddot\xi \equiv 0$ (Remark~\ref{rem:inertia} /
Theorem~\ref{thm:kinetic-EL}), since $\dot p = m\cosh(\dot\xi)\ddot\xi$
and $\cosh > 0$.
\end{corollary}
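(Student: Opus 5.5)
The plan is to read the claim off the cosh Euler--Lagrange equation \eqref{eq:newton-cosh}, specialized to the free case, rather than invoking the general Noether machinery. Since $\xi\in C^2([a,b])$, the function $p(t)=m\sinh(\dot\xi(t))$ is $C^1$. The chain rule then gives
\[
  \dot p(t) = m\cosh(\dot\xi(t))\,\ddot\xi(t).
\]

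\emph{First conclusion.} The hypothesis $V'\equiv 0$ reduces \eqref{eq:newton-cosh} to $m\cosh(\dot\xi)\ddot\xi = 0$ on $[a,b]$. Substituting, $\dot p\equiv 0$ on $[a,b]$, so $p$ is constant by the mean value theorem. This is just Theorem~\ref{thm:noether-explicit}(b), whose hypothesis ``$V$ constant'' is equivalent to $V'\equiv 0$ on the connected set $\R$. I would cite that result, with the computation above as the self-contained verification.

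\emph{Equivalence with $\ddot\xi\equiv 0$.} I would argue both directions from the same identity $\dot p = m\cosh(\dot\xi)\ddot\xi$.
\begin{itemize}
  \item If $p$ is constant, then $\dot p\equiv 0$. Because $m>0$ and $\cosh(\dot\xi(t))\ge 1>0$, dividing gives $\ddot\xi\equiv 0$.
  \item If $\ddot\xi\equiv 0$, then $\dot p\equiv 0$ directly.
\end{itemize}
Alternatively, strict monotonicity of $\sinh$ shows that $p$ constant is equivalent to $\dot\xi$ constant, which is equivalent to $\ddot\xi\equiv 0$. This matches Remark~\ref{rem:inertia} and Theorem~\ref{thm:kinetic-EL}.

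No step is a genuine obstacle; the only care point is the regularity bookkeeping. The assumption $\xi\in C^2$ is exactly what makes $p$ a $C^1$ function with the stated derivative, and so justifies the mean value theorem step.
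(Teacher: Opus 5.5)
Your proposal is correct and follows essentially the same route as the paper: the paper cites Theorem~\ref{thm:noether-explicit}(b), whose proof is exactly the computation $\dot p = m\cosh(\dot\xi)\ddot\xi = -V'(\xi) = 0$. It then obtains the $\ddot\xi\equiv 0$ form from $\cosh>0$, just as you do.
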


\begin{proof}
Theorem~\ref{thm:noether-explicit}(b); the equivalent ``$\ddot\xi=0$''
form is immediate from $\dot p = m\cosh(\dot\xi)\ddot\xi = 0$ and
$\cosh > 0$.
\end{proof}

\begin{remark}[Symmetry inventory of the d'Alembert framework]\label{rem:symmetry-inventory}
Within the cosh Lagrangian $L = m[\cosh\dot\xi - 1] - V(\xi)$, the
maximal one-parameter symmetry group is generated by:
\begin{itemize}[noitemsep,topsep=2pt]
  \item time translation, always present, conserving energy;
  \item $\xi$-translation, present iff $V$ is constant on $\R$,
    conserving the cosh momentum $m\sinh\dot\xi$;
  \item the discrete reflection $\xi \mapsto -\xi$ (with
    $\dot\xi \mapsto -\dot\xi$), present iff $V$ is even, which is
    a $\Z/2\Z$ symmetry rather than a one-parameter group and so
    yields no continuous Noether charge.
\end{itemize}
The native potential $V = k\Jlog$ is even but not translation
invariant, so it admits the reflection symmetry but not a continuous
$\xi$-translation symmetry; consequently the only continuous
conservation law of the native oscillator is energy. This explains
both \emph{why} energy survives the addition of the native potential
and \emph{why} the cosh momentum does not.
\end{remark}

\section{Potentials and Classical Variational Scope}\label{sec:scope}

Having shown in Section~\ref{sec:newton} how Newton's law arises
in the small-velocity limit of the cosh Lagrangian, we now ask
what variational status the resulting extremals carry when a
non-affine convex potential is added. Our convexity argument in
Section~\ref{sec:convexity} is \emph{free} of any potential;
adding $-V(\xi)$ breaks joint convexity of $\actionL$ in
$(\xi,\dot\xi)$ whenever $V$ is strictly convex (the native case
$V=k\Jlog$ is the basic example). The Lagrangian is then a
saddle: convex in velocity and concave in position. In this
setting the convex global-minimizer theorem of
Theorem~\ref{thm:pla} no longer applies, and we adopt the
classical local-minimum / stationary-action picture as the correct
replacement.

\begin{remark}[Convention on regularity classes for this section]\label{rem:regularity-shift}
\emph{This convention is in force throughout
Section~\ref{sec:scope}.} The free-sector results of
Sections~\ref{sec:pathspace}--\ref{sec:convexity} are stated for
\emph{kinetically admissible} paths in the sense of
Definition~\ref{def:kin-adm}: absolutely continuous positive paths
$\gamma:[a,b]\to\Rplus$ with $\log$-derivative in $L^2([a,b])$ and
finite kinetic action $\int(\cosh\dot\xi - 1)\,dt<\infty$.

All theorems below (Theorems~\ref{thm:short-time}, \ref{thm:stationarity},
\ref{thm:conjugate}) instead require:
\begin{enumerate}[label=(R\arabic*),noitemsep,topsep=2pt]
  \item the potential satisfies $V\in C^2(\R)$ (or $V\in C^1(\R)$ for
    the bare stationarity theorem
    Theorem~\ref{thm:stationarity});
  \item the extremal $\xi_*:[a,b]\to\R$ is of class $C^2([a,b])$ and
    satisfies the strong Euler--Lagrange equation
    \eqref{eq:newton-cosh} pointwise;
  \item variations $\eta:[a,b]\to\R$ are of class $C^1([a,b])$ with
    $\eta(a)=\eta(b)=0$.
\end{enumerate}
This is a deliberate shift to stronger regularity than the
free-sector path space of Section~\ref{sec:pathspace}, made because
the Jacobi sufficient/necessary conditions of Gelfand--Fomin
\cite{gelfand-fomin} are formulated at this regularity. The
free-sector theorems of Section~\ref{sec:convexity} are
\emph{not} re-stated under (R1)--(R3); they remain valid in the
larger kinetically admissible class. The two regularity worlds meet
at $C^2$ uniform-log-velocity paths, which simultaneously satisfy
the convex chord characterization of
Theorem~\ref{thm:pla} and the pointwise Euler--Lagrange equation of
Theorem~\ref{thm:kinetic-EL} (see Remark~\ref{rem:EL-class}).
\end{remark}

\subsection*{Second variation, Jacobi fields, and conjugate times}

Fix $a<b$, $m>0$, and $V\in C^2(\R)$. For a $C^2$ curve
$\xi:[a,b]\to\R$, write
\[
  \actionL_V[\xi]
    \;:=\; \int_a^b \bigl[m(\cosh(\dot\xi(t)) - 1) - V(\xi(t))\bigr]\,dt.
\]
For an admissible path $\gamma$ with $\xi=\log\gamma$, this
agrees with the earlier notation $\actionL_V[\gamma]=\actionL_V[\xi]$. Let
$\xi_*\in C^2([a,b])$ be a solution of the Euler--Lagrange equation
\eqref{eq:newton-cosh}. The classical second-variation theory for
first-order $C^2$ Lagrangians associates to $\xi_*$ a Jacobi equation.
In the present case $L_{\xi\dot\xi}=0$, so the Jacobi equation is
\begin{equation}\label{eq:jacobi}
  \frac{d}{dt}\Bigl(m\cosh(\dot\xi_*(t))\,\dot\eta(t)\Bigr)
    + V''(\xi_*(t))\,\eta(t) \;=\; 0.
\end{equation}

\begin{definition}[Conjugate time]\label{def:conjugate-time}
A time $c\in(a,b]$ is \emph{conjugate to $a$ along $\xi_*$} if there
exists a nonzero $C^2$ solution $\eta$ of \eqref{eq:jacobi} with
$\eta(a)=0$ and $\eta(c)=0$. The \emph{first conjugate time on
$[a,b]$} is the infimum of such $c$ in $(a,b]$, if any exists, and is
defined to be $\infty$ if no conjugate time occurs in $(a,b]$.
\end{definition}

\begin{theorem}[Short-time local minimality]\label{thm:short-time}
Let $a<b$, $m>0$, $V \in C^2(\R)$, and let $\actionL(\xi,\dot\xi) =
\Kkin_m(\dot\xi) - V(\xi)$. Let $\xi_*\in C^2([a,b])$ be a solution of
\eqref{eq:newton-cosh} with fixed endpoints, and let $t_c$ be the first
conjugate time of Definition~\ref{def:conjugate-time}. Denote by
$\mathcal V$ the class of $C^1$ endpoint-fixed variations
\[
  \mathcal V := \{\xi_* + \eta : \eta \in C^1([a,b]),\ \eta(a)=\eta(b)=0\}.
\]
If $b < t_c$ (equivalently, there is no conjugate point in $(a,b]$),
then $\xi_*$ is a strict local minimizer of $\actionL_V$ within
$\mathcal V$, in a sufficiently small $C^1$ neighborhood of $\xi_*$.
\end{theorem}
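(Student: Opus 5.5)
The plan is the classical Jacobi sufficiency argument (Gelfand--Fomin), adapted to $L(\xi,\dot\xi)=m(\cosh\dot\xi-1)-V(\xi)$ on the $C^1$ endpoint-fixed class $\mathcal V$. Write a competitor as $\xi_*+\eta$. Because $\xi_*$ satisfies \eqref{eq:newton-cosh}, the first variation vanishes after integrating by parts with $\eta(a)=\eta(b)=0$. Taylor's theorem with integral remainder in $(\xi,\dot\xi)$ then gives
\[
\actionL_V[\xi_*+\eta]-\actionL_V[\xi_*]=\int_a^b\Bigl(\tfrac12 P(t)\dot\eta^2-\tfrac12 Q(t)\eta^2\Bigr)dt+R[\eta],
\]
where $P=m\cosh\dot\xi_*\ge m>0$ and $Q=V''(\xi_*)$ are continuous on $[a,b]$. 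The mixed term is absent because $L_{\xi\dot\xi}=0$. The remainder has the form
\[
R=\tfrac12\int\bigl[\epsilon_1(t)\dot\eta^2-\epsilon_2(t)\eta^2\bigr]dt,
\]
with $\epsilon_1=m(\cosh(\dot\xi_*+\theta\dot\eta)-\cosh\dot\xi_*)$ and $\epsilon_2=V''(\xi_*+\theta'\eta)-V''(\xi_*)$. Both tend to $0$ uniformly as $\|\eta\|_{C^1}\to0$, using uniform continuity of $\cosh$ and $V''$ on compact sets.

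Step two makes the second variation coercive. Since there is no conjugate point in $(a,b]$ and conjugate points depend continuously on the data, I extend $\xi_*$ as a $C^2$ solution slightly past $b$ and use a Jacobi field started at $a-\delta$. This field is nonvanishing on $[a,b]$ for small $\delta$ by Sturm comparison and continuous dependence. It yields a $C^1$ solution $w$ of the Riccati equation $P^{-1}w^2=Q+\dot w$ on $[a,b]$, namely $w=-P\dot u/u$, with a small $\alpha>0$ margin built in. Concretely, I run the same construction for the perturbed pair $(P-\alpha,\,Q+\alpha)$, whose Jacobi problem is still conjugate-free for $\alpha$ small. Adding $\int\frac{d}{dt}(w\eta^2)=0$ and completing the square gives
\[
\int\bigl(P\dot\eta^2-Q\eta^2\bigr)\ge\alpha\int(\dot\eta^2+\eta^2)\ge c\,\|\dot\eta\|_{L^2}^2 .
\]

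Step three absorbs the remainder. Using the Poincaré/Friedrichs bound of Corollary~\ref{cor:sobolev-gap}, $\int\eta^2\le\frac{(b-a)^2}{\pi^2}\int\dot\eta^2$, we get
\[
|R|\le\tfrac12\bigl(\|\epsilon_1\|_\infty+\tfrac{(b-a)^2}{\pi^2}\|\epsilon_2\|_\infty\bigr)\|\dot\eta\|_{L^2}^2 .
\]
For $\|\eta\|_{C^1}$ small this is at most $\frac c4\|\dot\eta\|_{L^2}^2$. The action gap is then at least $\frac c4\|\dot\eta\|_{L^2}^2$, which is strictly positive unless $\dot\eta\equiv0$, and that forces $\eta\equiv0$ by the endpoint conditions. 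This gives strict local minimality in a $C^1$ neighborhood.

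The main obstacle is step two. The statement excludes conjugate points only in $(a,b]$, so I must justify the margin that makes the quadratic form strictly coercive rather than just positive. I would first try the perturbation-of-coefficients route, which needs continuous dependence of the first zero of the Jacobi solution on $(P,Q)$ and on the starting time. If that proves messy, the fallback is the Sturm--Liouville spectral characterization: no conjugate point in $(a,b]$ is equivalent to the first Dirichlet eigenvalue of $-\frac{d}{dt}P\frac{d}{dt}-Q$ being positive. That gives coercivity directly, as in Proposition~\ref{prop:boundary-conjugate}(i).
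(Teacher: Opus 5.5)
Your proposal is correct and follows essentially the paper's route. The paper checks the hypotheses of the Gelfand--Fomin Jacobi sufficient condition ($C^2$ Lagrangian, strong Legendre condition $L_{\dot\xi\dot\xi}=m\cosh\dot\xi\ge m>0$, $C^2$ extremal, no conjugate point in $(a,b]$) and cites it, whereas you reproduce the classical proof of that theorem (vanishing first variation, Riccati completion of the square with an $\alpha$-margin, absorption of the remainder via Friedrichs). Two cosmetic slips: with your form $\int(P\dot\eta^2-Q\eta^2)$ and $w=-P\dot u/u$, the Riccati equation that completes the square is $P^{-1}w^2=\dot w-Q$, not $Q+\dot w$; and starting the Jacobi field at $a-\delta$ requires extending the coefficients slightly before $a$, not past $b$.
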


\begin{proof}
We invoke the Jacobi sufficient-condition theorem in the form of
Gelfand--Fomin
\cite[Ch.~5, Sufficient Conditions for a Weak Extremum]{gelfand-fomin}.
Its hypotheses hold here: the Lagrangian
$L(\xi,\dot\xi)=m(\cosh\dot\xi-1)-V(\xi)$ is $C^2$, the strong
Legendre condition $L_{\dot\xi\dot\xi}=m\cosh(\dot\xi)\ge m>0$ holds,
$\xi_*$ is a $C^2$ extremal, and $b<t_c$ says that no nontrivial
Jacobi field vanishing at $a$ has a second zero in $(a,b]$. The
classical theorem therefore gives strict weak local minimality in a
sufficiently small $C^1$ neighborhood.
\end{proof}

\begin{theorem}[Long-time stationarity]\label{thm:stationarity}
Let $a<b$, $m>0$, $V\in C^1(\R)$, and let
$\xi_*\in C^2([a,b])$ be a solution of \eqref{eq:newton-cosh} with
fixed endpoints. Then the first variation of $\actionL_V$ at
$\xi_*$ vanishes on every $C^1$ endpoint-fixed variation $\eta$,
without any claim of local or global minimality.
\end{theorem}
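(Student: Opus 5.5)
The plan is a direct first-variation computation followed by an integration by parts, using only that $\xi_*$ solves \eqref{eq:newton-cosh} pointwise. Fix $\eta\in C^1([a,b])$ with $\eta(a)=\eta(b)=0$ and set
\[
  \Phi(\varepsilon):=\actionL_V[\xi_*+\varepsilon\eta]=\int_a^b\bigl[m(\cosh(\dot\xi_*+\varepsilon\dot\eta)-1)-V(\xi_*+\varepsilon\eta)\bigr]\,dt .
\]
The claim is that $\Phi'(0)=0$.

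First, I would justify differentiating under the integral at $\varepsilon=0$. The functions $\xi_*$, $\dot\xi_*$, $\eta$ and $\dot\eta$ are continuous on the compact interval $[a,b]$, hence bounded. So for $|\varepsilon|\le1$ the arguments $\xi_*+\varepsilon\eta$ and $\dot\xi_*+\varepsilon\dot\eta$ stay in a fixed compact set. On that set $\sinh$ and $V'$ are continuous and bounded, since $V\in C^1$. The $\varepsilon$-derivative of the integrand is therefore uniformly bounded, and dominated convergence (or the mean value theorem) gives
\[
  \Phi'(0)=\int_a^b\bigl[m\sinh(\dot\xi_*)\,\dot\eta-V'(\xi_*)\,\eta\bigr]\,dt ,
\]
exactly as in the proof of Theorem~\ref{thm:kinetic-EL}.

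Second, I would integrate by parts. Because $\xi_*\in C^2$, the function $m\sinh(\dot\xi_*)$ is $C^1$ with derivative $m\cosh(\dot\xi_*)\ddot\xi_*$, and $\eta$ is $C^1$. The boundary term $[m\sinh(\dot\xi_*)\eta]_a^b$ vanishes because $\eta(a)=\eta(b)=0$. This leaves
\[
  \Phi'(0)=-\int_a^b\bigl[m\cosh(\dot\xi_*)\ddot\xi_*+V'(\xi_*)\bigr]\eta\,dt .
\]
The bracket vanishes identically on $[a,b]$ by \eqref{eq:newton-cosh}, so $\Phi'(0)=0$ for every admissible $\eta$.

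No step is a serious obstacle. The only care needed is in the hypotheses: only $V\in C^1$ is assumed, so no second variation may be used, and the statement deliberately asserts nothing about the sign of $\Phi''$. Beyond that, differentiation under the integral must be justified with only $C^1$ data, which the compactness bound above handles. A closing remark would note that the conclusion holds for any $b$, including times past the first conjugate time, which is why the result is stated as stationarity rather than minimality.
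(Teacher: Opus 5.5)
Your proposal is correct and follows the paper's proof essentially step for step. You differentiate under the integral, where your compactness bound on $\sinh$ and $V'$ is more careful than the paper's one-line justification, then integrate the kinetic term by parts using $\eta(a)=\eta(b)=0$, and observe that the remaining integrand is the Euler--Lagrange expression \eqref{eq:newton-cosh}, which vanishes along $\xi_*$. Your sign, $\Phi'(0)=-\int_a^b\bigl[m\cosh(\dot\xi_*)\ddot\xi_*+V'(\xi_*)\bigr]\eta\,dt$, is in fact the correct one. The paper's display has the overall sign flipped, which is immaterial since the integrand is zero.
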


\begin{proof}
For $\eta\in C^1([a,b])$ with $\eta(a)=\eta(b)=0$, the standard
first-variation formula applies: the kinetic term is smooth in
$\dot\xi$, $V\in C^1$, and $\xi_*\in C^2$, so differentiating under
the integral and integrating the kinetic term by parts gives
\[
  \delta\actionL_V[\xi_*](\eta)
    = \int_a^b \Bigl(\frac{d}{dt}\partial_{\dot\xi}L(\xi_*,\dot\xi_*)
          - \partial_\xi L(\xi_*,\dot\xi_*)\Bigr)\eta(t)\,dt.
\]
Since $\xi_*$ satisfies the Euler--Lagrange equation, this integral
vanishes for all endpoint-fixed $\eta$.
\end{proof}

\begin{theorem}[Conjugate-time obstruction to local minimality]\label{thm:conjugate}
Let $a<b$, $m>0$, $V\in C^2(\R)$, and let
$\xi_*\in C^2([a,b])$ be a solution of \eqref{eq:newton-cosh} with
fixed endpoints. If there exists a time
$c\in(a,b)$ conjugate to $a$ along $\xi_*$
(Definition~\ref{def:conjugate-time}), then $\xi_*$ is not a strict
local minimizer of $\actionL_V$ on the class $\mathcal V$ of $C^1$
endpoint-fixed variations.
\end{theorem}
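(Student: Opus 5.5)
The plan is to follow the classical Jacobi necessary-condition argument in the form of Gelfand--Fomin, adapted to the regularity conventions (R1)--(R3), and to exhibit an explicit admissible variation with non-positive second variation. Let $\eta_c$ be the nonzero $C^2$ Jacobi field of \eqref{eq:jacobi} with $\eta_c(a)=\eta_c(c)=0$. Extend it by zero on $[c,b]$, calling the result $\hat\eta$. This function is continuous and piecewise $C^2$, and it vanishes at both endpoints.

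Write $P(t):=m\cosh(\dot\xi_*(t))\ge m>0$ and $Q(t):=-V''(\xi_*(t))$. The second variation is then the accessory form $\delta^2\actionL_V[\eta]=\int_a^b(P\dot\eta^2+Q\eta^2)\,dt$, using $L_{\xi\dot\xi}=0$. Integrating by parts on $[a,c]$ with the Jacobi equation $(P\dot\eta_c)^{\boldsymbol\cdot}=Q\eta_c$ shows that $\delta^2\actionL_V[\hat\eta]=[P\eta_c\dot\eta_c]_a^c=0$. So $\hat\eta$ is a null direction of a nonnegative quadratic form.

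The key step is strict negativity after perturbation. By uniqueness for the second-order ODE, $\dot\eta_c(c)\neq 0$, since otherwise $\eta_c\equiv0$. Hence $\hat\eta$ has a genuine corner at $c$, so it is not a $C^1$ extremal of the accessory form. Following Gelfand--Fomin's corner argument, I would round off the corner: replace $\hat\eta$ near $c$ by $\hat\eta+\delta h$, with $h$ a smooth bump supported near $c$ and chosen so that $\int(P\dot{\hat\eta}\dot h+Q\hat\eta h)\,dt=-P(c)\dot\eta_c(c^-)h(c)\neq0$. This gives a $C^1$ endpoint-fixed $\eta$ with $\delta^2\actionL_V[\eta]<0$. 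Equivalently, one can note that if the form were nonnegative on $C^1_0$, then by density it is nonnegative on $H^1_0$. A minimizer of a nonnegative form with zero value is a weak solution of the Jacobi equation, hence $C^2$ by elliptic regularity, which contradicts the corner.

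Finally, I would pass from a negative second variation to failure of strict local minimality. Since $L$ is $C^2$ in $(\xi,\dot\xi)$ when $V\in C^2$, Taylor's theorem under the integral gives $\actionL_V[\xi_*+\varepsilon\eta]=\actionL_V[\xi_*]+\tfrac{\varepsilon^2}{2}\delta^2\actionL_V[\eta]+o(\varepsilon^2)$. The first-order term vanishes by Theorem~\ref{thm:stationarity}. Thus for small $\varepsilon\neq0$ the action strictly drops below $\actionL_V[\xi_*]$ along competitors that are arbitrarily $C^1$-close. So $\xi_*$ is not a local minimizer, and a fortiori not a strict one.

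The main obstacle I expect is the corner-rounding step that produces strict negativity. It needs care to keep the perturbation $C^1$ and to control the $O(\delta^2)$ term against the $O(\delta)$ gain. I would try the weak-solution/regularity contradiction first, because it avoids explicit estimates.
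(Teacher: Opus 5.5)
Your proposal is correct and is essentially the paper's argument. The paper's proof simply invokes the classical Jacobi necessary condition of Gelfand--Fomin under the strong Legendre condition $L_{\dot\xi\dot\xi}=m\cosh\dot\xi\ge m>0$, and you unpack that cited theorem: a broken Jacobi field with vanishing accessory form, the corner contradiction, then the second-order expansion with the first variation killed by Theorem~\ref{thm:stationarity}. This in fact shows $\xi_*$ is not even a non-strict weak local minimizer.

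One small fix concerns the direct route. The function $\hat\eta+\delta h$ with smooth $h$ still has its corner at $c$. Also, the cross term is $+P(c)\dot\eta_c(c^-)h(c)$, which is harmless since you choose the sign of $\delta$. So that route only produces an $H^1_0$ competitor with negative second variation. It then needs the same density and $H^1$-continuity step as your weak-solution argument, which is the better one to lead with.
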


\begin{proof}
The Lagrangian is $C^2$ and satisfies the strong Legendre condition
$L_{\dot\xi\dot\xi}=m\cosh(\dot\xi)\ge m>0$. The classical Jacobi
necessary condition for a strict weak local minimum therefore applies
to the extremal $\xi_*$. That theorem states that a strict weak local
minimum cannot have a conjugate point to the initial endpoint in the
open interval $(a,b)$; see Gelfand--Fomin
\cite[Ch.~5, Necessary Conditions; Ch.~6]{gelfand-fomin}. The
existence of such a $c$ contradicts the necessary condition, so
$\xi_*$ is not a strict local minimizer on $\mathcal V$.
\end{proof}

Thus the variational principle changes form when a non-affine
strictly convex potential is introduced: the free convex global
minimum is replaced by short-time local minimality up to conjugate
points and long-time stationarity only.

\section{Discussion}\label{sec:discussion}

\subsection{Summary and scope}

Our main result is a free-sector variational theorem on positive
paths. The calibrated d'Alembert equation forces the cosh cost,
and applying that cost to the log-velocity yields the kinetic
action
$\actionA[\gamma] = \int_a^b [\cosh(\dot\xi(t)) - 1]\,dt$ with
$\xi = \log\gamma$. On the kinetically admissible class,
$\actionA$ is \emph{strongly} convex under geometric interpolation
(Theorem~\ref{thm:actionA-convex}), with an explicit $L^2$ slack
of $1$-strong convexity. The chord condition of
Theorem~\ref{thm:pla} therefore implies global minimality with a
quantitative gap (Theorem~\ref{thm:local-global}), and
Corollary~\ref{cor:uniform} identifies the unique fixed-endpoint
minimizer explicitly as the uniform-log-velocity path. The action
gap admits an exact Pythagorean / Bregman identity
(Theorem~\ref{thm:pythagorean}) together with the
Friedrichs--Poincar\'e bound
$\actionA[\gamma]-\actionA[\gamma_*]\ge
\frac{\pi^2}{2(b-a)^2}\|\log(\gamma/\gamma_*)\|_{L^2}^2$
(Corollary~\ref{cor:sobolev-gap}), and the minimum-action profile
$\actionA_*(T,\Delta) = T(\cosh(\Delta/T)-1)$ is the perspective
transform of $\cosh - 1$, jointly convex and positively
$1$-homogeneous, giving the geodesic-concatenation inequality of
Corollary~\ref{cor:concat}. These results form a
Bregman / dually-flat interpretation of the d'Alembert log-cost $\Jlog$ in
the additive coordinate $\xi=\log x$ (metric $\cosh\xi\,d\xi^2$),
which is distinct from the Hessian metric $g_J=x^{-3}\,dx^2$ of
$\Jcost$ in the coordinate $x$ (Remark~\ref{rem:dually-flat}).

\medskip
\noindent\emph{On the nature of the contribution.} The free-sector
arguments are deliberately elementary. Once d'Alembert fixes the
cosh cost (Theorem~\ref{thm:dAlembert-classification}) and
Postulate~\ref{post:step} places it at the log-velocity, the entire
convexity package follows from pointwise convexity of $\cosh$ after
the log change of coordinates
(Remark~\ref{rem:convexity-mechanism}), Jensen's inequality
(Corollary~\ref{cor:uniform}), the perspective construction
(Proposition~\ref{prop:perspective}), the one-dimensional
Friedrichs inequality (Corollary~\ref{cor:sobolev-gap}), and
textbook Bregman / dually-flat geometry
(Remark~\ref{rem:dually-flat}); Theorem~\ref{thm:pla} is the
specialization of the standard first-order optimality criterion for
convex functionals (Remark~\ref{rem:pla-meaning}). The novelty we
claim is therefore one of \emph{provenance and organization} --
that a functional equation together with a single named modeling
postulate forces a globally (not merely locally) minimizing free
action with an exact Bregman gap and a closed-form geodesic
minimizer -- rather than of analytic depth. Correspondingly, the
dynamical content actually forced by d'Alembert is narrow: free
motion is the uniform-log-velocity geodesic, and the interacting
system selected within this framework is the native cosh--sinh
oscillator (the sharper scope statement below); all other potentials
enter as external inputs.

This conclusion is a free-sector result; it does not assert
global minimality for Lagrangians with non-affine strictly convex
potentials. In that case the Lagrangian becomes a saddle in
$(\xi,\dot\xi)$, and Section~\ref{sec:scope} records the
classical replacement: short-time local minimality, long-time
stationarity, and conjugate-time obstructions. The bridge to
Newtonian mechanics is also conditional. It requires the
kinematic identification $q=\xi$, a mass coupling, a
dimensionless time calibration, and the Hamiltonian-primary
Legendre structure (Section~\ref{sec:newton}). With those choices
the cosh action has the Newtonian small-step limit and the
cosh-dual Hamiltonian reduces to $p^2/(2m)+V(q)$ at small
momentum. These bridge statements are interpretive consequences
of additional structure, not part of the free convexity theorem
itself.

A sharper version of the scope statement is the following. Among
all Lagrangians of the form
$\actionL(\xi,\dot\xi) = \Kkin_m(\dot\xi) - V(\xi)$ with $m > 0$,
the \emph{only one forced by the d'Alembert calibration} is
the native cosh--sinh Lagrangian
$L_{\mathrm{nat}}(\xi,\dot\xi) = m[\cosh\dot\xi - 1] - k[\cosh\xi - 1]$
of Definition~\ref{def:native}, in which the same $\cosh - 1$
function appears in both the kinetic and the static slot under
the same d'Alembert uniqueness. Every other potential --
harmonic, Coulomb, gravitational, polynomial, lattice, periodic,
$\dots$ -- is an \emph{external input} from the cost-field
environment that we do not derive from d'Alembert's equation
here. The mathematically privileged dynamical system is therefore
the cosh--sinh oscillator carrying two species couplings
$(m, k)$; even there, the binding coupling $k > 0$ is left as an
unconstrained input, with its provenance (like that of $m$)
lying outside the scope of this paper. For
any external $V$, the results of
Sections~\ref{subsec:general}--\ref{sec:scope} (Newton's law in
the small-velocity limit, Hamilton's equations, energy
conservation, short-time local min / long-time stationarity /
conjugate-time obstruction) apply, but they describe a
\emph{compatible} embedding of an externally specified mechanical
system into the cosh kinetic framework, not a derivation of that
mechanical system from d'Alembert.

\subsection{Open directions}

The most direct extensions are mathematical, and the Bregman /
dually flat reading of \S\ref{subsec:duallyflat} makes several of
them concrete.

\begin{itemize}[noitemsep,topsep=2pt]
  \item \textbf{Multi-component / matrix lift.} On the cone
    $\Rplus^n$ the cost $\Jcost_n(x) = \sum_i\Jcost(x_i)$ inherits
    the Hessian metric $\sum_i x_i^{-3}dx_i^2$, geometric
    interpolation acts componentwise, and
    Theorems~\ref{thm:actionA-convex}--\ref{thm:pythagorean} extend
    verbatim. The non-trivial lift is to $\mathrm{Sym}_{>0}^n$ via
    $\Jcost(X) = \tfrac12\mathrm{tr}(X + X^{-1}) - n$, where the
    affine-invariant geodesic structure suggests comparisons with
    log-Euclidean / Bures--Wasserstein geometry
    \cite{arsigny2007,bhatia_jain_lim2019}.
  \item \textbf{Discrete-time actions.} The discrete kinetic
    action $\sum_k(\cosh(\xi_{k+1}-\xi_k)-1)$ inherits geometric-
    interpolation convexity and the Pythagorean identity at the
    discrete level, giving a discrete optimal-control statement
    parallel to Corollary~\ref{cor:uniform}.
  \item \textbf{Probabilistic identification.} The function
    $\Kkin(v) = \cosh(v) - 1$ is, up to normalization, the cumulant
    generating function of the Skellam$(\tfrac12,\tfrac12)$
    distribution~\cite{skellam1946}. This suggests that $\actionA$ is the rate
    functional of a continuous-time Skellam-type random walk on
    $\R$, with the geodesic of Corollary~\ref{cor:uniform}
    appearing as the corresponding Schr\"odinger
    bridge~\cite{leonard2014}.
  \item \textbf{Field-theoretic cosh-Dirichlet energy.} For
    $\gamma : M \to \Rplus$ on a Riemannian manifold $(M,g)$,
    \(
      \mathcal{E}_M[\gamma] = \int_M(\cosh|\nabla\log\gamma|_g - 1)\,d\mathrm{vol}_g
    \)
    is convex along log-affine homotopies; its Euler--Lagrange
    equation is a degenerate-elliptic cosh-Laplacian with growth
    interpolating between $|\nabla|^2$ and $e^{|\nabla|}$.
  \item \textbf{Comparison with the Hessian energy $\Ehess$.} The
    relation between the Bregman geometry of $\actionA$
    (\S\ref{subsec:duallyflat}) and the Otto-type geometry of
    $\Ehess$ on the same base
    manifold $(\Rplus,\Jcost)$~\cite{otto2001,ambrosio_gigli_savare2008,villani2009}
    deserves explicit treatment: the
    two functionals differ in connection rather than in carrier.
\end{itemize}
These questions preserve our main theme: how much variational
structure is already forced by the algebra of the underlying cost.

\appendix

\section{Hessian Riemannian path-energy on \texorpdfstring{$(\Rplus,g_J)$}{(R>0, gJ)}}\label{app:hessian}

We collect here the structural facts about the Hessian path-energy
\[
  \Ehess[\gamma] \;=\; \int_a^b \tfrac{1}{2}\,\frac{\dot\gamma(t)^2}{\gamma(t)^3}\,dt
\]
of \S\ref{subsec:ground-state} that we did not need for the
free-sector convexity story of Section~\ref{sec:convexity}. The
material here is included for completeness and to make the
$\actionA$-vs-$\Ehess$ distinction unambiguous; we use none of it
in the main theorems of this work. A fuller variational analysis
of $\Ehess$, including any connection to Otto-type
Wasserstein-gradient-flow geometry~\cite{otto2001,villani2009} on a related space of
probability measures, is left to future work; we do not invoke Otto
calculus here.

\subsection{Explicit geodesic family}

\begin{theorem}[Explicit geodesic family]\label{thm:geodesic-family}
The family $\gamma(t) = (at + b)^{-2}$ with $a \neq 0$ and
$at + b > 0$ satisfies the geodesic equation \eqref{eq:geodesic}.
\end{theorem}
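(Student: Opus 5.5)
The plan is a direct substitution into the geodesic equation \eqref{eq:geodesic}, $\ddot\gamma - \tfrac{3}{2\gamma}\dot\gamma^2 = 0$. Write $u(t) := at+b$, which is affine with $\dot u = a$ and is positive on the domain by hypothesis. Then $\gamma = u^{-2}$ is smooth there and strictly positive, so $\Gamma(\gamma) = -3/(2\gamma)$ is defined. I will compute $\dot\gamma = -2a\,u^{-3}$ and $\ddot\gamma = 6a^2 u^{-4}$. For the connection term I will use $\dot\gamma^2 = 4a^2u^{-6}$ together with $1/\gamma = u^{2}$, which gives $\tfrac{3}{2\gamma}\dot\gamma^2 = \tfrac32\cdot u^2\cdot 4a^2u^{-6} = 6a^2u^{-4}$. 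Subtracting, the left side of \eqref{eq:geodesic} vanishes identically on the interval where $u>0$.

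As a sanity check, and as an alternative route, I would use the first integral of \eqref{eq:geodesic}. Multiplying by $\gamma^{-3}\dot\gamma$ shows that $\frac{d}{dt}\bigl(\gamma^{-3}\dot\gamma^2\bigr)=0$ along solutions, so the equation is equivalent to $\dot\gamma/\gamma^{3/2}$ being constant whenever $\dot\gamma\neq 0$. In turn, this says $\frac{d}{dt}\gamma^{-1/2}$ is constant, i.e.\ $\gamma^{-1/2}$ is affine. Since $\gamma^{-1/2} = u$ is affine, the family is recovered; this also explains the exponent $-2$.

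There is no real obstacle. The only care needed is the domain: the condition $at+b>0$ keeps $\gamma$ positive and finite, so the Christoffel symbol is defined. The hypothesis $a\neq0$ only excludes the constant solutions, which satisfy the equation trivially.
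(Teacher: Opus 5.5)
Your proposal is correct and follows essentially the same route as the paper's proof: substitute $u=at+b$, compute $\dot\gamma=-2a\,u^{-3}$, $\ddot\gamma=6a^2u^{-4}$ and $\Gamma(\gamma)\dot\gamma^2=-\tfrac32u^2\cdot 4a^2u^{-6}=-6a^2u^{-4}$, and observe that the two terms cancel. Your first-integral cross-check, showing that $\gamma^{-1/2}$ is affine, is also valid and explains the exponent $-2$, but the paper's proof does not need it.
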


\begin{proof}
With $u = at + b$: $\gamma = u^{-2}$, $\dot\gamma = -2au^{-3}$,
$\ddot\gamma = 6a^2 u^{-4}$, $\Gamma(\gamma) = -\tfrac32 u^2$, and
$\ddot\gamma + \Gamma(\gamma)\dot\gamma^2 = 6a^2u^{-4} - 6a^2u^{-4}
= 0$.
\end{proof}

\subsection{Geodesic completeness}

\begin{remark}[Geodesic completeness of $(\Rplus, g)$ at $0$ but not at $\infty$]\label{rem:hessian-completeness}
The Riemannian distance element of $g(x) = x^{-3}$ is
$ds = \sqrt{g(x)}\,dx = x^{-3/2}\,dx$. Direct integration gives
\[
  \int_0^1 x^{-3/2}\,dx \;=\; +\infty, \qquad
  \int_1^\infty x^{-3/2}\,dx \;=\; 2.
\]
Thus the boundary $x=0$ is at infinite Riemannian distance from
any interior point (the metric is complete at $0$), while
$x=\infty$ is at finite Riemannian distance (the metric is
incomplete at $\infty$). Consistently, the geodesic family
$\gamma(t) = (at+b)^{-2}$ with $a<0$ reaches $\gamma=+\infty$ at
the finite parameter $t = -b/a$, whereas $\gamma\to 0^+$ requires
$|t|\to\infty$.

This (in)completeness is a property of the Hessian path-energy
$\Ehess$ and its underlying metric $g$, \emph{not} of the
kinetic action $\actionA$, which uses the log-affine
($e$-)connection rather than the Riemannian connection of $g$.
Our free-sector theorems of Section~\ref{sec:convexity} do not
depend on completeness of $g$. A geodesically complete
continuation of $(\Rplus,g)$ at $\infty$ is left to future work.
\end{remark}

\subsection{Structural comparison of \texorpdfstring{$\actionA$}{A} and \texorpdfstring{$\Ehess$}{E_Hess}}

\begin{remark}[$\actionA$ and $\Ehess$ are distinct functionals]\label{rem:A-vs-E}
The kinetic Euler--Lagrange equation
(Theorem~\ref{thm:kinetic-EL}) selects uniform-log-velocity paths
$\gamma(t) = x_a\,e^{\dot\xi_0(t-a)}$. The Hessian-metric geodesic
equation \eqref{eq:geodesic} selects the power-law family
$\gamma(t) = (at+b)^{-2}$ together with the limiting positive
constant geodesics. As \emph{parameterized} curves $t\mapsto\gamma(t)$
these are different families on $(0,\infty)$, though both include
every positive constant path as a trivial solution. We stress that
the distinction is one of parameterization, not of trajectory: on the
$1$-manifold $\Rplus$ any two fixed-endpoint geodesics share the same
image -- the arc between the endpoints -- since a $1$-dimensional
manifold admits no choice of route, and a connection on it carries no
curvature and only fixes which time-law $t\mapsto\gamma(t)$ is
affine/constant-speed. Thus the exponential and power-law solutions
above are reparameterizations of one another as curves; what differs
is the time-law each connection selects as its geodesic
parameterization. The additional static cost-rate condition for
$\actionJ$ selects only $\gamma\equiv1$, so all three critical
conditions agree precisely at the normalized ground state.
Accordingly $\actionA$ and $\Ehess$ are genuinely different
functionals -- they score parameterizations differently and pick out
different extremal time-laws -- rather than geometrically distinct
trajectories in $\Rplus$.

The kinetic action $\actionA$ is the d'Alembert-calibrated object
-- $\Jcost$ applied to the infinitesimal \emph{step} -- and the
free-sector convexity theorems of Section~\ref{sec:convexity}
apply to it. The Hessian path-energy $\Ehess$ is a separate
Riemannian path-energy whose natural notion of convexity is
\emph{geodesic convexity} along the geodesics of the underlying
metric $g(x) = x^{-3}$ -- a different notion from the
geometric-interpolation (log-affine) convexity of $\actionA$. The
two pictures are complementary rather than equivalent: $\actionA$
formalizes the action principle from the d'Alembert side, while
$\Ehess$ formalizes the Riemannian geometry of the choice manifold
from the Hessian side. We use neither $\Ehess$ nor its geodesic
structure in our main theorems; a fuller development of $\Ehess$'s
geodesic and Otto-Wasserstein structure is left to future work.
\end{remark}



\begin{thebibliography}{99}

\bibitem{arnold1989}
V.~I.~Arnold.
\newblock \emph{Mathematical Methods of Classical Mechanics}.
\newblock Graduate Texts in Mathematics, vol.~60. Translated by K.~Vogtmann and
  A.~Weinstein. Springer-Verlag, New York, 2nd edition, 1989.

\bibitem{pardoguerra_ledger2026}
Pardo-Guerra, S.; Thapa, A.; Simons, M.; Washburn, J. Coherent Comparison as Information Cost: Axiomatic Foundations for Discrete Ledger Dynamics. 
Foundations 2026, 6, 17. https://doi.org/10.3390/foundations6020017

\bibitem{washburn_zlatanovic2026}
Washburn, J.; Zlatanović, M. Uniqueness of the Canonical Reciprocal Cost. 
Mathematics 2026, 14, 935. https://doi.org/10.3390/math14060935

\bibitem{washburn_zlatanovic_beltracchi2026}
Washburn, J.; Zlatanović, M.; Beltracchi, P. Multidimensional Cost Geometry. 
Axioms 2026, 15, 378. https://doi.org/10.3390/axioms15050378

\bibitem{aczel1966}
J.~Acz\'el.
\newblock \emph{Lectures on Functional Equations and Their Applications}.
\newblock Academic Press, New York, 1966.

\bibitem{stetkaer2013}
H.~Stetk\ae r.
\newblock \emph{Functional Equations on Groups}.
\newblock World Scientific, Singapore, 2013.

\bibitem{bregman1967}
L.~M.~Bregman.
\newblock The relaxation method of finding the common point of convex sets and
  its application to the solution of problems in convex programming.
\newblock \emph{USSR Computational Mathematics and Mathematical Physics},
  7(3):200--217, 1967.

\bibitem{amari_nagaoka2000}
S.~Amari and H.~Nagaoka.
\newblock \emph{Methods of Information Geometry}.
\newblock Translations of Mathematical Monographs, vol.~191. American
  Mathematical Society, Providence, RI, and Oxford University Press, Oxford,
  2000.

\bibitem{shima2007}
H.~Shima.
\newblock \emph{The Geometry of Hessian Structures}.
\newblock World Scientific, Hackensack, NJ, 2007.

\bibitem{rockafellar1970}
R.~T.~Rockafellar.
\newblock \emph{Convex Analysis}.
\newblock Princeton Mathematical Series, vol.~28. Princeton University Press,
  Princeton, NJ, 1970.

\bibitem{boyd_vandenberghe2004}
S.~Boyd and L.~Vandenberghe.
\newblock \emph{Convex Optimization}.
\newblock Cambridge University Press, Cambridge, 2004.

\bibitem{hardy_littlewood_polya1952}
G.~H.~Hardy, J.~E.~Littlewood, and G.~P\'olya.
\newblock \emph{Inequalities}.
\newblock Cambridge University Press, Cambridge, 2nd edition, 1952.

\bibitem{noether1918}
E.~Noether.
\newblock Invariante Variationsprobleme.
\newblock \emph{Nachrichten von der K\"oniglichen Gesellschaft der
  Wissenschaften zu G\"ottingen, Mathematisch-physikalische Klasse},
  235--257, 1918.

\bibitem{besselhagen1921}
E.~Bessel-Hagen.
\newblock \"Uber die Erhaltungss\"atze der Elektrodynamik.
\newblock \emph{Mathematische Annalen}, 84(3--4):258--276, 1921.

\bibitem{olver1993}
P.~J.~Olver.
\newblock \emph{Applications of Lie Groups to Differential Equations}.
\newblock Graduate Texts in Mathematics, vol.~107. Springer-Verlag, New York,
  2nd edition, 1993.

\bibitem{gelfand-fomin}
I.~M.~Gelfand and S.~V.~Fomin.
\newblock \emph{Calculus of Variations}.
\newblock Translated and edited by R.~A.~Silverman. Prentice-Hall,
  Englewood Cliffs, NJ, 1963; Dover reprint, 2000.

\bibitem{arsigny2007}
V.~Arsigny, P.~Fillard, X.~Pennec, and N.~Ayache.
\newblock Geometric means in a novel vector space structure on symmetric
  positive-definite matrices.
\newblock \emph{SIAM Journal on Matrix Analysis and Applications},
  29(1):328--347, 2007.

\bibitem{bhatia_jain_lim2019}
R.~Bhatia, T.~Jain, and Y.~Lim.
\newblock On the Bures--Wasserstein distance between positive definite
  matrices.
\newblock \emph{Expositiones Mathematicae}, 37(2):165--191, 2019.

\bibitem{skellam1946}
J.~G.~Skellam.
\newblock The frequency distribution of the difference between two Poisson
  variates belonging to different populations.
\newblock \emph{Journal of the Royal Statistical Society}, 109(3):296, 1946.

\bibitem{leonard2014}
C.~L\'eonard.
\newblock A survey of the Schr\"odinger problem and some of its connections
  with optimal transport.
\newblock \emph{Discrete and Continuous Dynamical Systems},
  34(4):1533--1574, 2014.

\bibitem{otto2001}
F.~Otto.
\newblock The geometry of dissipative evolution equations: the porous medium
  equation.
\newblock \emph{Communications in Partial Differential Equations},
  26(1--2):101--174, 2001.

\bibitem{ambrosio_gigli_savare2008}
L.~Ambrosio, N.~Gigli, and G.~Savar\'e.
\newblock \emph{Gradient Flows in Metric Spaces and in the Space of Probability
  Measures}.
\newblock Lectures in Mathematics ETH Z\"urich. Birkh\"auser, Basel,
  2nd edition, 2008.

\bibitem{villani2009}
C.~Villani.
\newblock \emph{Optimal Transport: Old and New}.
\newblock Grundlehren der mathematischen Wissenschaften, vol.~338.
  Springer-Verlag, Berlin, 2009.

\end{thebibliography}
\end{document}